\documentclass[12pt, reqno, a4paper]{amsart}
\usepackage[margin=1.2in]{geometry}
\numberwithin{equation}{section}
\usepackage{amssymb,amsfonts,amsthm}
\usepackage[utf8]{inputenc}
\usepackage{listings}
\usepackage{bm}
\usepackage[hyperpageref]{backref}
\usepackage{esint}
\usepackage{tikz}
\usepackage{bigints}
\usepackage{color}
\usepackage[bookmarks]{hyperref}
\usepackage{siunitx}
\usepackage{hyperref}
\pdfstringdefDisableCommands{\def\eqref#1{(\ref{#1})}}
\addtolength{\textheight}{3mm} \addtolength{\textwidth}{11mm}
\addtolength{\oddsidemargin}{-8mm}
\addtolength{\evensidemargin}{-8mm} \addtolength{\topmargin}{-5mm}

\allowdisplaybreaks[4]

\vfuzz8pt 

\newtheoremstyle{myremark}{10pt}{10pt}{}{}{\bfseries}{.}{.5em}{}

\newtheorem{remark}{Remark}
 \newtheorem{thm}{Theorem}
 
 \newtheorem{lemma}{Lemma}[section]
 
 \theoremstyle{definition}
 \newtheorem{defn}{Definition}[section]

\usepackage{hyperref}


\allowdisplaybreaks[4]


\begin{document}

\title[Fractional  boundary Hardy inequality]{Fractional  boundary Hardy inequality for the critical cases}

\author{Adimurthi, Prosenjit Roy, AND Vivek Sahu}

\address{ Department of Mathematics and Statistics,
Indian Institute of Technology Kanpur, Kanpur - 208016, Uttar Pradesh, India}

\email{Adimurthi: adiadimurthi@gmail.com}
\email{Prosenjit Roy: prosenjit@iitk.ac.in}
\email{Vivek Sahu: viiveksahu@gmail.com}

\subjclass[2020]{ 46E35 (Primary); 26D15 (Secondary)}

\keywords{fractional Sobolev spaces; fractional boundary Hardy inequality; critical cases.}

\date{}

\dedicatory{}

\begin{abstract}
We establish  generalised fractional boundary Hardy-type inequality, in the spirit of Caffarelli-Kohn-Nirenberg inequality for different values of $s$ and $p$ on various domains in  $\mathbb{R}^d, ~ d \geq 1$.  In particular, for Lipschitz bounded domains any values of   $s$ and  $p$ are admissible, settling all the cases in subcritical, supercritical and critical regime. In this paper we have solved the open problems posed by Dyda for the critical case $sp =1$. Moreover we have proved the embeddings of $W^{s,p}_{0}(\Omega)$ in subcritical, critical and supercritical uniformly without using Dyda's decomposition. Additionally, we extend our results to include a weighted fractional boundary Hardy-type inequality for the critical case.
\end{abstract}

\maketitle

\section{Introduction}
In 1920, G. H. Hardy established the inequality known as the ``Hardy inequality" for a sequence of nonnegative real numbers. This inequality relates the  $L^{p}$ norm of a sequence to the  $L^{p}$ norm of its cumulative sums. In  \cite{hardy1920}, he proved that
\begin{equation*}
    \sum_{n=1}^{\infty} \left( \frac{a_{1}+ \dots + a_{n}}{n} \right)^{p} \leq \left( \frac{p}{p-1} \right)^{p} \sum_{n=1}^{\infty} a^{p}_{n},
\end{equation*}
where  $p>1$ and  $\{a_{n}\}_{n \in \mathbb{N}}$ is a sequence of nonnegative real numbers. E. Landau showed in  \cite{Landau1926} that the constant  $\left( \frac{p}{p-1} \right)^{p}$ is sharp and the equality holds if and only if  $a_{n}=0$ for all  $n \in \mathbb{N}$. The integral analogue of Hardy inequality, now called the classical one-dimensional Hardy inequality, was announced by Hardy in 1925. In  \cite{Hardy1925}, Hardy proved the inequality 
\begin{equation}\label{1 dim hardy inequality}
    \int_{0}^{\infty} \left( \frac{1}{x} \int_{0}^{x} u(t) \, dt \right)^{p} dx \leq \left( \frac{p}{p-1} \right)^{p} \int_{0}^{\infty} u(x)^{p} \, dx,
\end{equation}
where  $p>1$ and  $u$ is a nonnegative measurable function on  $(0, \infty)$. The constant  $\left( \frac{p}{p-1} \right)^{p}$ is again sharp, and equality holds only when  $u=0$ almost everywhere (see  \cite[Theorem 1.2.1]{balinsky2015analysis}). For the case  $p=1$, the inequality  \eqref{1 dim hardy inequality} fails (by taking  $\chi_{(0,1)}$) even if the constant  $\left( \frac{p}{p-1} \right)^{p}$ is replaced by any arbitrary positive constant  $C$. If $u \in L^{p}((0,\infty))$ with $1<p< \infty$ and we define $ U(x) = \int_{0}^{x} u(t)\, dt$,
then $U \in W^{1,p}_{\mathrm{loc}}((0,\infty))$ with weak derivative $U'(x) = u(x)$ for a.e.\ $x>0$. In this formulation, Hardy inequality takes the equivalent form
\begin{equation*}
    \int_{0}^{\infty} \frac{|U(x)|^{p}}{x^{p}} \, dx \leq  \left( \frac{p}{p-1} \right)^{p}  \int_{0}^{\infty} |U'(x)|^{p} \, dx,
\end{equation*}
and the constant is again optimal. The generalization of Hardy inequality to higher dimensions is given by
\begin{equation}\label{hardy inequality in higher dim}
    \int_{\mathbb{R}^d} \frac{|u(x)|^{p}}{|x|^{p}} \, dx \leq   \left| \frac{p}{p-d} \right|^{p} \int_{\mathbb{R}^d} |\nabla u(x)|^{p} \, dx,
\end{equation}
which holds for all  $u \in C^{\infty}_{c}(\mathbb{R}^d)$ if  $1 < p<d$, and for all  $u \in C^{\infty}_{c}(\mathbb{R}^d \setminus \{ 0 \})$ if  $p>d$. The constant  $\left| \frac{p}{p-d} \right|^{p}$  in  \eqref{hardy inequality in higher dim} is sharp (see  \cite[Lemma 2.1]{azorero1998} for the case  $1<p<d$ and  \cite[Theorem 1.2.5]{balinsky2015analysis} for  $p>d$), but never achieved. This inequality is also known as the ``Hardy-Sobolev" inequality. For the critical case $p=d$, the inequality  \eqref{hardy inequality in higher dim} does not hold with  $\left| \frac{p}{p-d} \right|^{p}$ replaced by any positive constant  $C$ on the space  $C_{c}^{\infty}(\mathbb{R}^{d} \backslash \{ 0 \} )$ (and hence also fails on  $C^{\infty}_{c}(\mathbb{R}^d)$). This can be seen directly by considering the sequence of functions
\begin{equation*}
    u_{n}(x) = \begin{cases}
        0, & 0<|x| \leq \frac{1}{n} \ \text{or} \ |x|\geq 4   \\
        1, & \frac{2}{n} \leq |x| \leq 3 
    \end{cases}
    \end{equation*}
such that  $|\nabla u_{n}(x)| \leq C n$ for all  $|x| \in (\frac{1}{n}, \frac{2}{n}) $ and  $|\nabla u_{n}(x)| \leq C $ for all  $|x| \geq \frac{2}{n}$, for some  $C>0$ . Let  $\Omega$ be a bounded domain in $\mathbb{R}^d$,   $d \geq 2$, with  $0 \in \Omega$, we have
\begin{equation*}
    \int_{\Omega} \frac{|u(x)|^{p}}{|x|^{p}} \, dx \leq \left( \frac{p}{d-p} \right)^{p} \int_{\Omega} |\nabla u(x)|^{p} \, dx,
\end{equation*}
holds for all  $ u \in C^{\infty}_{c}(\Omega)$ if  $1 < p < d$, and the constant  $\left( \frac{p}{d-p} \right)^{p}$ is sharp but never achieved. For a bounded domain  $\Omega$ in $\mathbb{R}^d$, Adimurthi, N. Chaudhuri and M. Ramaswamy in  \cite{adi2002} improved the Hardy-Sobolev inequality by adding a term characterized by a singular logarithmic weight and established the following result: for   $1<p  \leq d$, there exists a constant  $C =C(d,p,R) >0$ such that
\begin{equation*}
    \int_{\Omega} |\nabla u(x)|^{p} \, dx \geq \left( \frac{d-p}{p}  \right)^{p} \int_{\Omega} \frac{|u(x)|^{p}}{|x|^{p}} \, dx + C \int_{\Omega} \frac{|u(x)|^{p}}{|x|^{p}} \left( \ln \frac{R}{|x|} \right)^{- \gamma}  \, dx, \ \forall \  u \in C^{\infty}_{c}(\Omega),
\end{equation*}
where  $R \geq \sup_{\Omega} (|x|e^{\frac{2}{p}})$. The inequality holds if and only if  $\gamma \geq  2$ when  $1<p<d$, and  $\gamma \geq d$ when  $p=d$. Therefore, the result of \cite{adi2002} includes the critical case  $p = d$. A different type of logarithmic correction was later obtained in \cite{DELPINO2010}.

\smallskip

The boundary Hardy–Sobolev inequality for the local case (see \cite{lewis1988}) is stated as follows: let  $\Omega$ be a bounded domain with Lipschitz boundary in $\mathbb{R}^{d}$, and let  $1<p< \infty$. Then there exists a constant  $C=C(d,p, \Omega)>0$ such that
\begin{equation}\label{boundary hardy inequality}
      \int_{\Omega} \frac{|u(x)|^{p}}{\delta^{p}_{\Omega}(x)} \, dx \leq C \int_{\Omega} |\nabla u(x) |^{p} \, dx, \quad \forall \ u \in C^{\infty}_{c}(\Omega),
\end{equation}
where  $\delta_{\Omega}$ denotes the distance to the boundary of  $\Omega$, that is,  $\delta_{\Omega}(x) :=  \underset{y \in \partial \Omega}{\min} |x-y|$. The best constant  $\left( \frac{p}{p-1} \right)^{p}$ is obtained for convex domains in \cite{matskewich1997}. Using an appropriate approximation argument, one can extend the boundary Hardy–Sobolev inequality \eqref{boundary hardy inequality} to the classical Sobolev space  $W^{1,p}_{0}(\Omega)$, the closure of  $C^{\infty}_{c}(\Omega)$ with respect to the norm  $\left( \|u\|^{p}_{L^{p}(\Omega)} + \| \nabla u \|^{p}_{L^{p}(\Omega)} \right)^{\frac{1}{p}}$. The available literature is huge on Hardy-Sobolev inequality, and we refer to some of the related work in  \cite{adimurthisekar2006, tertikas2003,Brezis1997, Brezis19971,  lu2,lu1, e49ec619-af19-3029-915b-1bad4f2a8331, Nguyen2019, vazquez2000}. For the generalization of Hardy-Sobolev type inequality to Orlicz spaces, we refer to the work of  \cite{alberico2023, kaushik2022, salort2022}, and to the Heisenberg group, we refer to the work of  \cite{luan2008hardy, xiao2019hardy, yener2016weighted}.

\smallskip

We are interested in a similar type of problem in the setting of fractional Sobolev spaces, which have been considered with great interest in the last few decades due to their connection with probability theory (for example, see  \cite{Bogdan2003CensoredSP, claudia}) and partial differential equations (see  \cite{di2012hitchhikers} for introduction to the subject). 

\begin{defn}\emph{[Fractional Boundary Hardy inequality (FBHI)]}
Let $p>1$ and $s \in (0,1)$. Then there exists a constant 
$C = C(d,p,s,\Omega) > 0$ such that   

\begin{equation}\label{fractionalhardy}
    \int_{\Omega} \frac{|u(x)|^{p}}{\delta_{\Omega}^{sp}(x)} \, dx \leq C \int_{\Omega} \int_{\Omega} \frac{|u(x)-u(y)|^{p}}{|x-y|^{d+sp}} \, dx \, dy, \quad \forall \ u \in C^{\infty}_{c}(\Omega). 
\end{equation} 
\end{defn}

\begin{defn} The cases $sp < 1$, $sp = 1$, and $sp > 1$ are referred to as the subcritical, critical, and supercritical cases for the FBHI, respectively, when $\Omega$ is a bounded Lipschitz domain.
\end{defn}

Z.-Q. Chen and R. Song in  \cite{chen2003} studied FBHI for the case $p=2$. Afterwards, B. Dyda in  \cite{Dyda2004} investigated the FBHI  \eqref{fractionalhardy} for the subcritical and supercritical cases with arbitrary $p>0$. This work is the main motivation for the present article.
More precisely, he established the following theorem.

\begin{thm}[\emph{[Dyda, 2004]}]\label{dyda}
    Let  $\Omega$ be an open set in $\mathbb{R}^d, ~  d \geq 1$, and let   $p>0.$ The integral inequality \eqref{fractionalhardy} holds true in each of the following cases:
\begin{itemize}
\item[(T$1$)]  $\Omega$ is a bounded Lipschitz domain and $sp>1$;
\item[(T$2$)]  $\Omega$ is the complement of the bounded Lipschitz domain,  $sp \neq 1$ and  $sp \neq d$;
\item[(T$3$)]  $\Omega$ is a domain above the graph of a Lipschitz function  $\mathbb{R}^{d-1} \to \mathbb{R}$ and  $sp \neq 1$;
\item[(T$4$)]  $\Omega$ is the complement of a point and  $sp \neq d$.
\end{itemize}
Moreover, the inequality \eqref{fractionalhardy} fails in the critical cases, that is,  $sp=1$ for (T$1$) and  $sp=d$ for (T$2$) and (T$4$).
\end{thm}

The critical case  $sp=1$ for (T$2$) is not addressed in  \cite{Dyda2004}, but it is not difficult to show (see Subsection \ref{failure FBHI}, Appendix \ref{appen}) that  \eqref{fractionalhardy} also fails. 

\smallskip

The best constant for  \eqref{fractionalhardy} when  $\Omega = \mathbb{R}^{d}_{+}, ~ sp \neq 1$, and  $p=2$ is established in  \cite{bogdan2011}, and this result is further generalised  for any  $p\geq 1$ in  \cite{Frank2010}. For convex domains and  $p>1$, the best constant for \eqref{fractionalhardy} was obtained in \cite{loss2010}. The critical case  $sp=d$ for  $\Omega= \mathbb{R}^d \setminus \{ 0 \}$ was studied by H.-M. Nguyen and M. Squassina in \cite{squassina2018}. They proved (see  \cite[Theorem 3.1 with  $a=1$]{squassina2018}) an appropriate version of  \eqref{fractionalhardy} by adding a logarithmic weight on the left-hand side. In fact, their results are much more general, as they establish a full range of fractional Caffarelli–Kohn–Nirenberg inequalities. However, the issues of optimality of the logarithmic weight function and the best constant in their main theorem are not addressed. Adimurthi, P. Jana and P. Roy in  \cite{AdiPurbPro2023} studied  (T$1$)   \eqref{fractionalhardy} for the critical case  $sp=1$ in dimension  $1$. For Fractional Sobolev-Poincar\'e and fractional Hardy inequalities in unbounded John domains, we refer to the work of R. Hurri-Syrj{\"a}nen and A. V. V{\"a}h{\"a}kangas in  \cite{MR3343059}. For recent developments on fractional Hardy-type inequalities and other related inequalities, we refer to  \cite{brasco2018, csato, Cabre2022, luckn, lu2, lu2004, dyda2022sharp, lu1, lucartan, frank2008,  Frank2010, lu2009}. In particular, we refer to \cite{luckn}, where the Caffarelli-Kohn-Nirenberg inequality is studied by establishing appropriate identities, which in turn help in obtaining results on the existence of extremal functions as well.
   
\smallskip

We establish appropriate inequalities for all the remaining critical cases of Theorem  \ref{dyda}. In fact, we establish a much more general form of inequalities with appropriate logarithmic corrections. More precisely, our main result is as follows: suppose  $d \geq 1$,  $p>1$, and  $s \in (0,1)$. We denote by $W^{s,p}_{0}(\Omega)$, the closure of $C^{\infty}_{c}(\Omega)$ with respect to the norm $\|\cdot\|_{W^{s,p}(\Omega)}$ (see \eqref{defn frac sob space} for the definition of $W^{s,p}(\Omega)$). We define
\begin{equation*}
    p^{*}_{s}:= \frac{dp}{d-sp},  \quad \text{if} \ sp<d.
\end{equation*}

\begin{thm}\label{mainresult}
    Let  $\Omega$ be an open set in $\mathbb{R}^{d}$. Then the following fractional boundary Hardy-type inequality
    \begin{multline}\label{maineqn}
        \left( \int_{\Omega} \frac{|u(x)|^{\tau}}{\delta_{\Omega}^{\alpha}(x) \ln^{\beta}(\rho(x))} \,  dx \right)^{\frac{1}{\tau}} \leq C \left( \int_{\Omega} \int_{\Omega}  \frac{|u(x)-u(y)|^{p}}{|x-y|^{d+sp}} \, dx \, dy + \int_{\Omega} |u(x)|^{p} \, dx \right)^{\frac{1}{p}}, \\ \forall \  u \in W^{s,p}_{0}(\Omega),
    \end{multline}
    where $C=C(d,p,s, \tau, \Omega, R)>0$, holds true in each of the following cases:
    \begin{itemize}
        \item[(A$1$)] $\Omega$ is a bounded Lipschitz domain such that  $\delta_{\Omega}(x) < R$ for all  $x \in \Omega$, for some  $R>0$, and  $sp=1$. In this case,  $\rho(x) = \frac{2R}{\delta_{\Omega}(x)} , ~ W^{s,p}_{0}(\Omega) = W^{s,p}(\Omega) $ and \\
        (a) if  $d=1$ and  $\tau \geq p$, then  $\alpha=1$ and  $\beta= \tau$. \\
        (b) if  $d>1$ and  $\tau =p$, then  $\alpha=1$ and  $ \beta=p$. \\
        (c) if  $d>1$ and  $\tau \in (p,p + \frac{1}{d}]$, then  $\alpha = d+ (1-d)\frac{\tau}{p} $ and  $\beta= \tau -1$. \\
        (d) if  $d>1$ and  $\tau \in (p+ \frac{1}{d}, p^{*}_{s} ]$, then  $\alpha = d+ (1-d)\frac{\tau}{p}$ and  $\beta= dp + (1-d) \tau$. \\
        (e) if  $d \geq 1$ and  $\tau <p$, then  $\alpha=1$ and  $\beta=p$.
        \item[(A$2$)]  $\Omega$ is a domain above the graph of a Lipschitz function  $\gamma : \mathbb{R}^{d-1} \to \mathbb{R},  ~ d>1$ and  $sp=1$ such that  $\operatorname{supp} u \subset \Omega \backslash \overline{\Omega}_{R}$ where  $\Omega_{R}$ is a domain above the graph of a Lipschitz function  $\gamma_{R}: \mathbb{R}^{d-1} \to \mathbb{R}$ such that  $\gamma_{R}(x') = \gamma(x')+R$ for some  $R>0$. In this case,  $\rho(x) = \frac{2R}{\delta_{\Omega}(x)} $ and \\
        (a) if  $\tau =p$, then  $\alpha=1$ and  $ \beta=p$. \\
        (b) if  $\tau \in (p,p + \frac{1}{d}]$, then  $\alpha = d+ (1-d)\frac{\tau}{p} $ and  $\beta= \tau -1$. \\
        (c) if  $\tau \in (p+ \frac{1}{d}, p^{*}_{s} ]$, then  $\alpha = d+ (1-d)\frac{\tau}{p}$ and  $\beta= dp + (1-d) \tau$. \\ 
        (d) if  $\tau <p$, then  $\alpha=1$ and  $\beta=p$.
        \item [(A$3$)] $\Omega$ is a complement of a bounded Lipschitz domain such that  $\overline{\mathbb{R}^d \backslash \Omega} \subset B_{R}(0)$ for some  $R>0, ~ d>1$ and  $sp=d$. In this case,  $\rho(x) = \max \big\{ \frac{2R}{\delta_{\Omega}(x)} , \frac{2 \delta_{\Omega}(x)}{R} \big\},   ~ \tau = p, ~ \alpha= d$ and  $\beta = p$.
    \end{itemize}
 The weight function is optimal for $d = 1$ and $\tau \ge p$, and for $d > 1$ with $\tau = p$ in the critical case $sp = 1$, in the cases (A$1$)(a), (A$1$)(b), and (A$2$)(a), among the class of functions depending on $\delta_{\Omega}$.
\end{thm}

\smallskip

If we consider (A$1$) of Theorem \ref{mainresult}, then \eqref{maineqn} can be rewritten as
\begin{equation}\label{Theorem 2 Remark}
     \left( \int_{\Omega} \frac{|u(x) - (u)_{\Omega}|^{\tau}}{\delta_{\Omega}^{\alpha}(x) \ln^{\beta}(\rho(x))} \, dx \right)^{\frac{1}{\tau}} \leq C \left( \int_{\Omega} \int_{\Omega}  \frac{|u(x)-u(y)|^{p}}{|x-y|^{d+sp}} \, dx \, dy \right)^{\frac{1}{p}} ,  \quad \forall \ u \in W^{s,p}(\Omega) ,
\end{equation}
where  $(u)_{\Omega}$ denotes the average of  $u$ over  $\Omega$, given by
\begin{equation*}
    (u)_{\Omega} = \frac{1}{|\Omega|} \int_{\Omega} u(y) \, dy ,
\end{equation*}
and  $|\Omega|$ denotes the Lebesgue measure of  $\Omega$. The above Hardy-type inequality is obtained by applying the fractional Poincaré inequality \eqref{poincare} in Theorem \ref{mainresult}.  For a general bounded Lipschitz domain $\Omega \subset \mathbb{R}^{d}$ with $sp=1$, we established in Lemma \ref{lemma proof of opt gen domain} the optimality of the weight function in Theorem  \ref{mainresult}, among the class of functions depending on $\delta_{\Omega}$. Initially, we showed that  $W^{s,p}_{0} (\Omega) = W^{s,p}(\Omega)$, which is proved in Lemma  \ref{lemma on Wsp} using a result due to B. Dyda and M. Kijaczko in  \cite[Lemma 13]{dyda2022}. In fact, for any bounded Lipschitz domain  $\Omega$, G. Leoni \cite[Theorem 6.78]{leonibook} proves that   $W^{s,p}_{0}(\Omega) = W^{s,p}(\Omega)$ when  $sp \leq 1$. Using this fact, the trace operator  $T : W^{s,p}_{0} (\Omega) \to L^{p} (\partial \Omega )$, defined by  $T(u) = u|_{\partial \Omega}$, is not well defined for $sp \leq 1$. When  $\Omega= B_{1}(0)$ is a unit ball in  $\mathbb{R}^{d}$, and we consider the radial case, one of the key ingredients  to prove the optimality of the weight function for  $B_{1}(0)$ in the previous theorem is to find out the  approximating sequence (with precise rates of convergence) in  $W^{s,p}_{0}$ for  $sp=1$, as established in Theorem  \ref{density theorem}.

\smallskip 

(A$1$) of Theorem \ref{mainresult} deals with the critical case $sp=1$ for bounded Lipschitz domains. In this article, we also generalize the FBHI \eqref{fractionalhardy} for bounded Lipschitz domains when $sp \neq 1$. This is established in the next two theorems. For the case  $sp \neq 1$ and $\tau=p$, the results are already available in  \cite{Dyda2004}. We extend their results to cover appropriate values of $\tau$. In particular, we will prove the following theorems: suppose  $d \geq 1, ~ p>1$, and  $s \in (0,1)$.

\begin{thm}\label{theorem sp<1}
Let $sp < 1$ and let $\Omega$ be a bounded Lipschitz domain in $\mathbb{R}^{d}$. Then the following fractional boundary Hardy-type inequality
   \begin{equation}\label{generalized1}
        \left(  \int_{\Omega} \frac{|u(x)|^{\tau}}{\delta^{\alpha}_{\Omega}(x) } \,  dx \right)^{\frac{1}{\tau}} \leq  C \left( \int_{\Omega} \int_{\Omega}  \frac{|u(x)-u(y)|^{p}}{|x-y|^{d+sp}} \, dx \, dy + \int_{\Omega} |u(x)|^{p} \, dx \right)^{\frac{1}{p}}, \  \forall  \ u \in W^{s,p}(\Omega),
   \end{equation}
 where $C=C(d,p,s, \tau,\Omega)>0$,  holds true in each of the following cases:
   \begin{itemize}
       \item[(B$1$)]  if  $\tau \in [p, p^{*}_{s}]$, then  $\alpha= d+ (sp-d)\frac{\tau}{p}$.
       \item[(B$2$)] if  $\tau <p$, then  $\alpha = sp$.
   \end{itemize}
\end{thm}

\smallskip

 The next theorem generalizes and provides a different proof of Dyda's result in \cite{Dyda2004} for bounded Lipschitz domains.

\begin{thm}\label{theorem sp>1}
   Let $sp > 1$ and let $\Omega$ be a bounded Lipschitz domain in $\mathbb{R}^{d}$. Then the following fractional boundary Hardy-type inequality
   \begin{equation}\label{generalized2}
        \left(  \int_{\Omega} \frac{|u(x)|^{\tau}}{\delta^{\alpha}_{\Omega}(x) } \, dx \right)^{\frac{1}{\tau}} \leq  C 
 \left( \int_{\Omega} \int_{\Omega}  \frac{|u(x)-u(y)|^{p}}{|x-y|^{d+sp}} \,  dx \, dy \right)^{\frac{1}{p}}, \quad \forall \ u \in W^{s,p}_{0}(\Omega),
   \end{equation}
  where $C=C(d,p,s, \tau, \Omega)>0$, holds true in each of the following cases:
   \begin{itemize}
       \item[(C$1$)]  if  $sp<d$, then  $\tau \in [p, p^{*}_{s}] $ and  $\alpha= d+ (sp-d)\frac{\tau}{p}$.
       \item[(C$2$)] if  $sp=d$, then  $\tau \geq p$ and  $\alpha = sp$.
       \item[(C$3$)] if  $sp>d$, then $\tau =p$ and  $\alpha=sp$.
   \end{itemize}
\end{thm}

Theorem \ref{theorem sp<1} and Theorem \ref{theorem sp>1} generalize the fractional boundary Hardy inequality for suitable $\tau$ depending on $s$ and $p$. However, the generalization is straightforward for the case $sp < d$. It suffices to establish the case $\tau = p$. For suitable $\tau$, this follows from interpolation and the fractional Sobolev inequality, i.e., H$\ddot{\text{o}}$lder's inequality, the fractional boundary Hardy-type inequality for the case $\tau = p$, and the fractional Sobolev inequality, which is done in Subsection \ref{proof of theorem sp>1} in the proof of Theorem \ref{theorem sp>1}.

\smallskip

\begin{remark}
    \normalfont For a bounded Lipschitz domain  $\Omega$ and  $\alpha>0$, there exists a constant  $C= C(d,p,s,\tau,\Omega)>0$ such that  $|u(x)|^{\tau} \leq C \frac{|u(x)|^{\tau}}{\delta^{\alpha}_{\Omega}(x)}$ for all  $x \in \Omega$ and for all  $ u \in W^{s,p}_{0}(\Omega)$. Therefore, using this and Theorem  \ref{theorem sp>1}, we obtain
\begin{equation*}
    \left( \int_{\Omega} |u(x)|^{\tau} \, dx \right)^{\frac{1}{\tau}} \leq C \left( \int_{\Omega} \frac{|u(x)-u(y)|^{p}}{|x-y|^{d+sp}} \, dx \, dy \right)^{\frac{1}{p}}, \quad \forall \  u \in W^{s,p}_{0}(\Omega),
\end{equation*}
 for all  $\tau \geq p$ when  $1<sp=d$, and  $\tau \in [p, p^{*}_{s} ] $ when  $1<sp<d$.
\end{remark}

\smallskip

In this article, we also consider a weighted fractional boundary Hardy-type inequality for the critical case in  $\mathbb{R}^{d}_{+}$, which was not addressed by Dyda and Kijaczko in \cite{dyda2022sharp}. They studied the inequality
\begin{equation*}
 \int_{\mathbb{R}^d_{+}} \frac{|u(x)|^{p}}{x^{sp - \beta_{1}- \beta_{2}}_{d}} \, dx \leq C \int_{\mathbb{R}^{d}_{+}} \int_{\mathbb{R}^{d}_{+}} \frac{|u(x)-u(y)|^{p}}{|x-y|^{d+sp}} x^{\beta_{1}}_{d} y^{\beta_{2}}_{d} \, dx \, dy, \quad \forall \ u \in C_{c}(\mathbb{R}^{d}_{+}),
\end{equation*}
and proved that this inequality holds whenever  $\beta_{1},  ~ \beta_{2}, ~ \beta_{1}+ \beta_{2} \in (-1, sp)$ and  $1+ \beta_{1}+ \beta_{2} \neq sp$. We will prove (see Theorem \ref{upperhalfplane} in Section \ref{weighted fractional boundary Hardy-type}) the corresponding version of the above inequality for the critical case  $1+ \beta_{1} + \beta_{2} = sp$ with an additional logarithmic weight. Theorem \ref{upperhalfplane} generalizes condition (A$2$) of Theorem \ref{mainresult} in the special case $\Omega= \mathbb{R}^{d}_{+}$. Moreover, upon substituting  $\beta_{1}= \beta_{2} = 0$ in Theorem \ref{upperhalfplane}, one recovers (A$2$) of Theorem \ref{mainresult} for $\Omega = \mathbb{R}^{d}_{+}$.

\smallskip

The article is organized as follows. In Section \ref{preliminaries}, we present preliminary lemmas and notation that will be used throughout the paper. In Section \ref{The flat boundary case}, we establish a fractional boundary Hardy-type inequality for the critical case  $sp=1$ in domains with flat boundaries, and for the case  $sp=d$ in the complement of a ball in $\mathbb{R}^d$. Section \ref{proof of main result} contains the proofs of Theorem \ref{mainresult} (without the optimality of the weight function), Theorem \ref{theorem sp<1}, and Theorem \ref{theorem sp>1}. In Section \ref{optimality of weight function}, we prove the approximation theorem and establish the optimality of the weight function depending on $\delta_{\Omega}$ in the critical case $sp=1$. Finally, in Section \ref{weighted fractional boundary Hardy-type}, we prove the weighted fractional boundary Hardy-type inequality for the critical case.


\section{Notation and Preliminaries}\label{preliminaries}

In this section, we introduce the notation, definitions, and preliminary results that will be used throughout the article. Throughout this article we shall use the following notation: for a measurable set  $\Omega \subset \mathbb{R}^d, ~   (u)_{\Omega}$ will denote the average of the function  $u$ over  $\Omega$, i.e., 
\begin{equation*}
    (u)_{\Omega} :=  \frac{1}{|\Omega|} \int_{\Omega} u(x) \,  dx = \fint_{\Omega} u(x) \, dx. 
\end{equation*}
Here,  $|\Omega|$ denotes the Lebesgue measure of  $\Omega$. We denote by  $B_{r}(x)$, an open ball of radius  $r$ and center  $x$ in  $\mathbb{R}^d$, and  $\mathbb{S}^{d-1}$ represents the boundary of the open ball of radius  $1$ with center  $0$ in  $\mathbb{R}^{d}$. The parameter  $s$ will always be understood to lie in $(0,1)$. Any point  $x \in \mathbb{R}^d$ is represented as  $x=(x',x_{d})$, where  $x'=(x_{1}, \dots, x_{d-1}) \in \mathbb{R}^{d-1}$. For any  $f,  ~ g: \Omega ( \subset \mathbb{R}^d) \to \mathbb{R}$, we denote  $f \sim g$ if there exist constants  $C_{1}, ~ C_{2}>0$ such that  $C_{1}g(x) \leq f(x) \leq C_{2} g(x)$ for all  $x \in \Omega$. Also,  $C>0$ will denote a generic constant that may change from line to line. 

\smallskip

Let  $\Omega$ be an open set in $\mathbb{R}^{d}$, and let  $s \in (0,1)$. For any  $p \in [1, \infty)$, define the fractional Sobolev space
\begin{equation}\label{defn frac sob space}
W^{s,p}(\Omega) := \left\{    u \in L^{p}(\Omega) : \int_{\Omega} \int_{\Omega}  \frac{|u(x)-u(y)|^{p}}{|x-y|^{d+sp}} \, dx \, dy < \infty \right\},
\end{equation}
endowed with the norm
\begin{equation*}
    \|u\|_{W^{s,p}(\Omega)} := \left( \|u\|^{p}_{L^{p}(\Omega)} + [u]^{p}_{W^{s,p}(\Omega)} \right)^{\frac{1}{p}},
\end{equation*}
where 
\begin{equation*}
    [u]_{W^{s,p}(\Omega)} := \left( \int_{\Omega} \int_{\Omega}  \frac{|u(x)-u(y)|^{p}}{|x-y|^{d+sp}} \, dx \, dy \right)^{\frac{1}{p}} 
\end{equation*}
is the so-called Gagliardo seminorm. Denote by  $W^{s,p}_{0}(\Omega)$, the closure of  $C^{\infty}_{c}(\Omega)$ with respect to the norm  $\| \cdot \|_{W^{s,p}(\Omega)}$. For  $\beta_{1}, ~ \beta_{2} \in \mathbb{R}$,  define the weighted Gagliardo seminorm using the distance function as
\begin{equation*}
     [u]_{W^{s,p, \beta_{1}, \beta_{2}}(\Omega)} := \left( \int_{\Omega} \int_{\Omega}  \frac{|u(x)-u(y)|^{p}}{|x-y|^{d+sp}} \delta^{\beta_{1}}_{\Omega}(x) \delta^{\beta_{2}} _{\Omega}(y) \, dx \, dy \right)^{\frac{1}{p}},
\end{equation*}
and define the weighted fractional Sobolev space
\begin{equation}\label{weighted fractional sobolev norm}
    W^{s,p, \beta_{1}, \beta_{2}}(\Omega) = \left\{ u \in L^{p}(\Omega) : [u]_{W^{s,p, \beta_{1}, \beta_{2}}(\Omega)} < \infty \right\},
\end{equation}
with the norm as 
\begin{equation*}
    \|u\|_{W^{s,p, \beta_{1}, \beta_{2}}(\Omega)} := \left( \| u\|^{p}_{L^{p}(\Omega)} + [u]^{p}_{W^{s,p, \beta_{1}, \beta_{2}}(\Omega)} \right)^{\frac{1}{p}}.
\end{equation*}

\smallskip

One has, for any  $a_{1},  \dots , a_{m} \in \mathbb{R}$ and  $\gamma \geq1$,
\begin{equation}\label{sumineq}
    \sum_{\ell=1}^{m} |a_{\ell}|^{\gamma} \leq  \left( \sum_{\ell=1}^{m} |a_{\ell}| \right)^{\gamma} .
\end{equation}
\smallskip

{\bf A bounded domain with Lipschitz boundary}: Let  $\Omega$ be a bounded Lipschitz domain. Then for each  $x \in \partial \Omega$ there exists  $r_{x}>0$, an isometry  $T_{x}$ of  $\mathbb{R}^d$, and a Lipschitz function  $\phi_{x} : \mathbb{R}^{d-1} \to \mathbb{R}$ such that
\begin{equation*}
    T_{x}(\Omega) \cap B_{r_{x}}(T_{x}(x)) = \{ \xi : \xi_{d} > \phi_{x}(\xi') \} \cap B_{r_{x}}(T_{x}(x))  . 
\end{equation*}

\smallskip

{\bf Fractional Poincar\'e\ inequality}  \cite[Theorem 3.9]{edmunds2022}: Let $p \geq 1$ and $\Omega$ be a bounded open set in $\mathbb{R}^{d}$. Then there exists a constant  $C = C(d,p,s, \Omega)>0$ such that
\begin{equation}\label{poincare}
    \int_{\Omega} |u(x)-(u)_{\Omega}|^{p} \, dx \leq C [u]^{p}_{W^{s,p}(\Omega)},  \quad \forall  \ u \in W^{s,p}(\Omega) .
\end{equation} 
\smallskip

The next lemma provides the exact dependence of the constant in Poincar\'e inequality/ Sobolev inequality on the parameter  $\lambda$, where  $\lambda $ is the scaling parameter of domain  $\Omega$ in all directions.  Since we have to decompose our domain  $\Omega$ (in all the cases) as a disjoint union of small cubes of different sizes, the next lemma is one of the key ingredients in the proof of our main results.
\begin{lemma}[\emph{Fractional Sobolev inequality for $sp \leq d$}]\label{sobolev} 
   Let  $\Omega$ be a bounded Lipschitz domain in  $\mathbb{R}^{d}, ~ d \geq 1$. Let  $p > 1$ and  $s \in (0,1)$ be such that  $sp \leq d$. Define  $\Omega_{\lambda}: = \left\{ \lambda x : ~  x \in \Omega \right\}$ for  $\lambda>0$. There exists a constant  $C=C(d,p,s, \tau, \Omega)>0$ such that, for any  $u \in W^{s,p}(\Omega_{\lambda})$, we have
    \begin{equation}
         \left( \fint_{\Omega_{\lambda}} |u(x)-(u)_{\Omega_{\lambda}}|^{\tau } \, dx \right)^{\frac{1}{\tau}}  \leq C \left( \lambda^{sp-d} [u]^{p}_{W^{s,p}(\Omega_{\lambda})} \right)^{\frac{1}{p}}  ,
    \end{equation}
    for any  $\tau \in [p, p^{*}_{s}]$ when  $sp<d$, and  $\tau \geq p$ when  $sp=d$.
\end{lemma} 
\begin{proof}
  Let  $\Omega$ be a bounded Lipschitz domain in $\mathbb{R}^{d}$. Then from Theorem  $6.7$ of  \cite{di2012hitchhikers}, for any  $\tau \in [p,p^{*}_{s}]$ if  $sp<d$, and from Theorem  $6.10$ of  \cite{di2012hitchhikers}, for any  $\tau \geq p$ if  $sp=d$, we have
\begin{equation}\label{Sobineqq}
      \|u\|_{L^{\tau}(\Omega)} \leq C \|u\|_{W^{s,p}(\Omega)},
  \end{equation}
   where  $C=C(d,p,s, \tau, \Omega)$ is a constant. Applying  \eqref{Sobineqq} with  $u-(u)_{\Omega}$ and using  \eqref{poincare}, we obtain
    \begin{equation*}
        \|u-(u)_{\Omega}\|_{L^{\tau}(\Omega)} \leq C [u]_{W^{s,p}(\Omega)} .
    \end{equation*}
    Therefore,
    \begin{equation}\label{ineqlemma}
       \left(  \fint_{\Omega} |u(x)-(u)_{\Omega}|^{\tau } \, dx \right)^{\frac{1}{\tau}}  \leq C  [u]_{W^{s,p}(\Omega)}  . 
    \end{equation}
    Let us apply the above inequality to  $u(\lambda x)$ instead of  $u(x)$. This gives
    \begin{equation*}
        \left(  \fint_{\Omega}  \Big|u(\lambda x)-\fint_{\Omega} u(\lambda x) \, dx \Big|^{\tau } \, dx \right)^{\frac{1}{\tau}}  \leq C \left( \int_{\Omega} \int_{\Omega} \frac{|u(\lambda x) - u(\lambda y)|^{p}}{|x-y|^{d+sp}} \, dx \, dy  \right)^{\frac{1}{p}}  . 
    \end{equation*}
    Using the fact
    \begin{equation*}
        \fint_{\Omega} u(\lambda x) \, dx = \fint_{\Omega_{\lambda}} u(x) \, dx,
    \end{equation*}
    we have
    \begin{equation*}
       \left(  \fint_{\Omega} |u(\lambda x)-(u)_{\Omega_{\lambda}}|^{\tau } \, dx \right)^{\frac{1}{\tau}}  \leq C \left( \int_{\Omega} \int_{\Omega} \frac{|u(\lambda x) - u(\lambda y)|^{p}}{|x-y|^{d+sp}} \, dx \, dy  \right)^{\frac{1}{p}}  . 
    \end{equation*}
    By changing variables \(X = \lambda x\) and \(Y = \lambda y\), we obtain
    \begin{equation*}
         \left( \fint_{\Omega_{\lambda}} |u(x)-(u)_{\Omega_{\lambda}}|^{\tau } \, dx \right)^{\frac{1}{\tau}}  \leq C \left( \lambda^{sp-d} [u]^{p}_{W^{s,p}(\Omega_{\lambda})} \right)^{\frac{1}{p}}  . 
    \end{equation*}
    This finishes the proof of the lemma.
\end{proof}

\smallskip

For the study of fractional Poincar\'e inequalities when the domains are scaled in a certain direction (but not in all directions), we refer to the following lemma, which establishes a connection to the average of $u$ over two disjoint sets. This technical step is crucial in the proof of the main results.

\begin{lemma}\label{avg}
    Let $E$ and $F$ be disjoint sets in $\mathbb{R}^{d}$. Then for any $\tau > 1$, there exists a constant $C > 0$ such that
    \begin{equation}
        |(u)_{E} - (u)_{F}|^{\tau} \leq C \frac{|E \cup F|}{\min \{ |E|, |F| \} }  \fint_{E \cup F} |u(x)-(u)_{E \cup F}|^{\tau} \, dx  . 
    \end{equation}
\end{lemma}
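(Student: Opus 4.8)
The plan is to reduce the estimate to two one-sided comparisons of $(u)_E$ and $(u)_F$ with the joint average $(u)_{E\cup F}$, and to treat each of these by Jensen's inequality on the relevant piece. Throughout, write $G=E\cup F$ (a disjoint union) and abbreviate $a=(u)_E$, $b=(u)_F$, $m=(u)_G$. Since $\int_G u = \int_E u + \int_F u$, the average over $G$ is the convex combination $m=\tfrac{|E|}{|G|}\,a+\tfrac{|F|}{|G|}\,b$, and a one-line computation yields the identities
\[
a-m=\frac{|F|}{|G|}\,(a-b),\qquad b-m=\frac{|E|}{|G|}\,(b-a).
\]
Equivalently $|a-b|=\tfrac{|G|}{|F|}\,|a-m|=\tfrac{|G|}{|E|}\,|b-m|$, so it is enough to control $|a-m|$ and $|b-m|$.

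For this, fix $\tau\ge 1$ and use convexity of $t\mapsto |t|^{\tau}$. Since $m$ is constant, $a-m=\fint_E(u-m)\,dx$, and Jensen's inequality over $E$ gives
\[
|a-m|^{\tau}=\Bigl|\fint_E (u-m)\,dx\Bigr|^{\tau}\le \fint_E |u-m|^{\tau}\,dx=\frac{1}{|E|}\int_E |u-m|^{\tau}\,dx\le \frac{|G|}{|E|}\fint_G |u-m|^{\tau}\,dx,
\]
where the last inequality merely enlarges the domain of integration from $E$ to $G$. The identical argument over $F$ gives $|b-m|^{\tau}\le \tfrac{|G|}{|F|}\fint_G |u-m|^{\tau}\,dx$.

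Finally, combine the two estimates. Using $|a-b|^{\tau}\le 2^{\tau-1}\bigl(|a-m|^{\tau}+|b-m|^{\tau}\bigr)$ together with the previous bounds and $\tfrac{1}{|E|}+\tfrac{1}{|F|}\le \tfrac{2}{\min\{|E|,|F|\}}$, one gets
\[
|a-b|^{\tau}\le 2^{\tau-1}\Bigl(\frac{|G|}{|E|}+\frac{|G|}{|F|}\Bigr)\fint_G |u-m|^{\tau}\,dx\le 2^{\tau}\,\frac{|E\cup F|}{\min\{|E|,|F|\}}\fint_{E\cup F}|u-(u)_{E\cup F}|^{\tau}\,dx,
\]
which is the claimed inequality with $C=2^{\tau}$. (Alternatively, assuming $|E|\le|F|$ and using only $|a-b|=\tfrac{|G|}{|F|}|a-m|$ with $\tfrac{|G|}{|F|}\le 2$ gives the same conclusion.) There is essentially no obstacle here; the computation is elementary. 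The one point to keep in mind is that the right-hand side must carry the reciprocal of the \emph{smaller} of $|E|,|F|$: a careless single-sided estimate would instead produce the much larger weight $|E\cup F|^{\tau+1}/\bigl(|E|^{\tau}|F|\bigr)$, so it is the split into two symmetric comparisons (or equivalently the $|E|\le|F|$ reduction) that keeps the constant clean and independent of the sizes of $E$ and $F$.
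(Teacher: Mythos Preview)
Your proof is correct and follows essentially the same route as the paper's: both arguments insert the joint average $(u)_{E\cup F}$, split $|(u)_E-(u)_F|^\tau$ into the two one-sided pieces $|(u)_E-(u)_{E\cup F}|^\tau$ and $|(u)_F-(u)_{E\cup F}|^\tau$, bound each by Jensen/H\"older over the smaller set, and then enlarge the integration domain to $E\cup F$ while replacing $|E|,|F|$ by their minimum. Your write-up is tidier (explicit constant $C=2^{\tau}$ and the convex-combination identities), but there is no substantive difference in method.
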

\begin{proof}
Let us consider  $|(u)_{E}-(u)_{F}|^{\tau}$, we have
\begin{align*}
    |(u)_{E}-(u)_{F}|^{\tau} & = |(u)_{E} - (u)_{E \cup F} - (u)_{F} + (u)_{E \cup F}|^{\tau} \\ &
        \leq C|(u)_{E} - (u)_{E \cup F}|^{\tau} +  C | (u)_{F} + (u)_{E \cup F}|^{\tau} \\ &
        = C \left| \fint_{E} \left\{ u(x) - (u)_{E \cup F} \right\}  \, dx  \right|^{\tau} + C \left| \fint_{F} \left\{ u(x) - (u)_{E \cup F} \right\} \, dx \right|^{\tau}.
\end{align*}
    By using H$\ddot{\text{o}}$lder's inequality with  $ \frac{1}{\tau} +  \frac{1}{\tau'} = 1$, we have
    \begin{align*}
         |(u)_{E}-(u)_{F}|^{\tau} & \leq  C \fint_{E} |u(x) - (u)_{E \cup F} |^{\tau} \,  dx + C \fint_{F} | u(x) - (u)_{E \cup F} |^{\tau} \,  dx \\ &
       \leq  \frac{C}{\min \{ |E|, |F| \} } \int_{E \cup F} |u(x) - (u)_{E \cup F} |^{\tau} \,  dx \\ & 
       = C \frac{|E \cup F|}{\min \{ |E|, |F| \}} \fint_{E \cup F} |u(x) - (u)_{E \cup F} |^{\tau}  \, dx .
    \end{align*}
    This completes the proof of the lemma.
\end{proof}

\smallskip

The next lemma establishes an inequality when any function  $u \in W^{s,p}(\Omega)$ is multiplied by a test function. This lemma plays a crucial role in establishing our main results.
\begin{lemma}\label{testfunc}
    Let  $\Omega$ be an open set in  $\mathbb{R}^d$. Let us consider  $u \in W^{s,p}(\Omega)$ and  $\xi \in C^{0,1}(\Omega), ~ 0 \leq \xi \leq 1$. Then  $\xi u \in W^{s,p}(\Omega)$ and for some constant  $C=C(d,p,s,\Omega)>0$, 
    \begin{equation*}
        \| \xi u\|_{W^{s,p}(\Omega)} \leq C \|u\|_{W^{s,p}(\Omega)}.
    \end{equation*}
\end{lemma}
\begin{proof}
    See  \cite[Lemma 5.3]{di2012hitchhikers} for the proof.
\end{proof}

\smallskip

The next lemma establishes compactness result involving the fractional Sobolev spaces $W^{s,p}(\Omega)$ in bounded Lipschitz domains. This lemma is useful in proving Theorem  \ref{theorem sp>1}.

\begin{lemma}\label{compactness}
    Let  $s \in (0,1), ~ p \geq 1, ~ \tau \in [1,p], ~ \Omega$ be a bounded Lipschitz domain in $\mathbb{R}^{d}$ and  $\mathcal{F}$ be a bounded subset of  $L^{p}(\Omega)$. Suppose that
    \begin{equation*}
        \sup_{u \in \mathcal{F}} \int_{\Omega} \int_{\Omega} \frac{|u(x)-u(y)|^{p}}{|x-y|^{d+sp}} \, dx \, dy < + \infty.
    \end{equation*}
    Then  $\mathcal{F}$ is pre-compact in  $L^{\tau}(\Omega)$.
\end{lemma}
\begin{proof}
    See  \cite[Theorem 7.1]{di2012hitchhikers} for the proof.
\end{proof}

\smallskip

The next lemma establishes a basic inequality. A similar inequality is also used in \cite{squassina2018} (see \cite[Lemma 3.2]{squassina2018}), but our version is less restrictive.

\begin{lemma}\label{estimate}
    Let  $\tau >1$ and $c>1$. Then for all  $a, ~ b \in \mathbb{R}$, we have 
    \begin{equation}
        (|a| + |b|)^{\tau} \leq c|a|^{\tau} + (1-c^{\frac{-1}{\tau -1}})^{1-\tau} |b|^{\tau} .
    \end{equation}
\end{lemma}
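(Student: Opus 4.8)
The plan is to prove the two-variable inequality $(|a|+|b|)^\tau \le c|a|^\tau + (1-c^{-1/(\tau-1)})^{1-\tau}|b|^\tau$ by reducing it to a one-variable optimization. First I would observe that the inequality is homogeneous of degree $\tau$ in $(a,b)$, and that only $|a|,|b|$ appear, so it suffices to treat $a,b\ge 0$ and, after dividing through by $a^\tau$ (the case $a=0$ being trivial), to set $t=b/a\ge 0$ and prove
\begin{equation*}
(1+t)^\tau \le c + (1-c^{-1/(\tau-1)})^{1-\tau}\,t^\tau \qquad \text{for all } t\ge 0.
\end{equation*}
Equivalently, writing $K:=(1-c^{-1/(\tau-1)})^{1-\tau}$, I must show $f(t):=(1+t)^\tau - K t^\tau \le c$ for all $t\ge 0$.

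Next I would find the maximum of $f$ on $[0,\infty)$. Since $\tau>1$ and $c>1$ gives $c^{-1/(\tau-1)}\in(0,1)$, we have $K>1$, so $K t^\tau$ eventually dominates $(1+t)^\tau$ and $f(t)\to -\infty$; also $f(0)=1<c$. Differentiating, $f'(t)=\tau(1+t)^{\tau-1}-\tau K t^{\tau-1}$, which vanishes when $(1+t)^{\tau-1}=K t^{\tau-1}$, i.e. $(1+1/t)^{\tau-1}=K$, i.e. $1+1/t = K^{1/(\tau-1)} = (1-c^{-1/(\tau-1)})^{-1}$. Solving, $1/t = \frac{c^{-1/(\tau-1)}}{1-c^{-1/(\tau-1)}}$, so the unique critical point is $t_0 = \frac{1-c^{-1/(\tau-1)}}{c^{-1/(\tau-1)}} = c^{1/(\tau-1)}-1$. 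This is positive precisely because $c>1$, which is where that hypothesis is used.

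Finally I would evaluate $f$ at $t_0$. At $t_0$ we have $1+t_0 = c^{1/(\tau-1)}$, hence $(1+t_0)^\tau = c^{\tau/(\tau-1)}$, and $t_0^\tau = (c^{1/(\tau-1)}-1)^\tau$. For the second term, $K t_0^\tau = (1-c^{-1/(\tau-1)})^{1-\tau}(c^{1/(\tau-1)}-1)^\tau$; factoring $c^{1/(\tau-1)}-1 = c^{1/(\tau-1)}(1-c^{-1/(\tau-1)})$ gives $(c^{1/(\tau-1)}-1)^\tau = c^{\tau/(\tau-1)}(1-c^{-1/(\tau-1)})^\tau$, so $K t_0^\tau = c^{\tau/(\tau-1)}(1-c^{-1/(\tau-1)})^{\tau+(1-\tau)} = c^{\tau/(\tau-1)}(1-c^{-1/(\tau-1)})$. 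Therefore
\begin{equation*}
f(t_0) = c^{\tau/(\tau-1)} - c^{\tau/(\tau-1)}(1-c^{-1/(\tau-1)}) = c^{\tau/(\tau-1)}\cdot c^{-1/(\tau-1)} = c^{(\tau-1)/(\tau-1)} = c,
\end{equation*}
so $\max_{t\ge 0} f(t) = f(t_0) = c$, which is exactly the claim (with equality attained at $b/a = c^{1/(\tau-1)}-1$). I do not expect any serious obstacle here; the only mild subtlety is bookkeeping the exponents in the evaluation step, and checking that $t_0$ is indeed a maximum — which follows since it is the unique interior critical point, $f$ is continuous on $[0,\infty)$ with $f(0)<c$ and $f(t)\to-\infty$, so the supremum is attained at $t_0$.
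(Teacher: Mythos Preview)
Your proof is correct and follows essentially the same approach as the paper: both reduce by homogeneity to a one-variable function on $[0,\infty)$, locate its unique critical point by differentiation, and evaluate there. The only cosmetic difference is that the paper normalizes by $|b|$ (setting $x=|a|/|b|$ and maximizing $(1+x)^\tau - c x^\tau$), whereas you normalize by $|a|$ (setting $t=|b|/|a|$ and maximizing $(1+t)^\tau - K t^\tau$), which simply swaps the roles of $c$ and $K=(1-c^{-1/(\tau-1)})^{1-\tau}$.
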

\begin{proof}
    Let  $f(x) = (1+x)^{\tau}- cx^{\tau}$ for all  $x \geq 0$. Then  $f(0) =  1$ and  $f(x) \to - \infty$ as  $x \to \infty$. Let  $x_{0}$ be the point of maxima. Then  $f'(x_{0}) = 0$, i.e.,
    \begin{equation*}
        \tau (1+x_{0})^{\tau -1} - c \tau x^{\tau -1}_{0} = 0.
    \end{equation*}
    Therefore, we have  $(1+x_{0})^{\tau}= c^{ \frac{\tau}{\tau-1}} x^{\tau}_{0}$ and  $x_{0} = \frac{1}{c^{\frac{1}{\tau-1}}-1}$. As  $x_{0}$ is the point of maxima, we have  $ f(x) \leq f(x_{0})$ for all  $x \geq 0$. Therefore,
    \begin{equation*}
        (1+x)^{\tau} - cx^{\tau} \leq (1+x_{0})^{\tau} - cx^{\tau}_{0} 
        = (c^{ \frac{\tau}{\tau -1}}- c) x^{\tau}_{0} 
        = \frac{(c^{\frac{\tau}{\tau -1}} - c)} {(c^{ \frac{1}{\tau-1}} -1)^{\tau}} 
        = (1-c^{\frac{-1}{\tau -1}})^{1-\tau}.
    \end{equation*}
    Assume  $|b| \neq 0$ and take  $x= \frac{|a|}{|b|}$. Then, the above inequality proves the lemma.
\end{proof}

\smallskip

The next lemma establishes a basic inequality for all  $n \in \mathbb{N}$. This lemma is helpful in proving Theorem  \ref{mainresult}.

\begin{lemma}\label{large n ineq}
Let  $\tau>1$. Then for all  $n \in \mathbb{N}$, we have
     \begin{equation*}
       \frac{1}{(n)^{\tau-1}} - \frac{1}{ \left(n + \frac{1}{2} \right)^{\tau-1}} \geq \frac{1}{2} ( \tau-1) \left( \frac{2}{3} \right)^{\tau} \frac{1}{n^{\tau}}.
    \end{equation*}
\end{lemma}
\begin{proof}
 Let  $f(x) = (n+x)^{1- \tau}$ for $x \in \left[0, \frac{1}{2} \right]$ and for some  $n \in \mathbb{N}$. By the mean value theorem, there exists $\gamma \in \left(0, \tfrac{1}{2}\right)$ such that
    \begin{equation*}
        f'(\gamma) = \frac{f(\frac{1}{2})- f(0)}{\frac{1}{2}} = \frac{\left( n+ \frac{1}{2} \right)^{1- \tau} - (n)^{1-\tau}}{\frac{1}{2}}.
    \end{equation*}
   Therefore, we have
    \begin{equation*}
      \frac{1}{(n)^{\tau-1}} - \frac{1}{ \left(n + \frac{1}{2} \right)^{\tau-1}} = - \frac{1}{2} f'(\gamma) = \frac{1}{2} ( \tau-1) (n + \gamma)^{- \tau}  =  \frac{1}{2} ( \tau-1) n^{- \tau} \left( 1+ \frac{\gamma}{n} \right)^{- \tau}.
    \end{equation*}
    For any $\gamma \in \left(0, \tfrac{1}{2}\right)$ and $n \in \mathbb{N}$, we have $  1+ \frac{\gamma}{n} \leq 1+\frac{1}{2} = \frac{3}{2}$. Therefore,
    \begin{equation*}
      \left( 1+ \frac{\gamma}{n} \right)^{- \tau} \geq \left( \frac{2}{3} \right)^{\tau}, \quad \forall \  n \in \mathbb{N}.
    \end{equation*}
    Combining the above two estimates, we obtain for all $n \in \mathbb{N}$,
    \begin{equation*}
         \frac{1}{(n)^{\tau-1}} - \frac{1}{ \left(n + \frac{1}{2} \right)^{\tau-1}} \geq \frac{1}{2} ( \tau-1) \left( \frac{2}{3} \right)^{\tau} \frac{1}{n^{\tau}}.
    \end{equation*}
    This proves the lemma.
\end{proof}


\section{The flat boundary case}\label{The flat boundary case}
In this section, we establish our main results in the flat boundary case, where the boundary is taken to be $\{ x=(x',x_d) \in \mathbb{R}^{d} : x_{d} =0 \}$, and the functions are supported on $\Omega_{n}$. The domain $\Omega_{n}$ is defined below. For general bounded Lipschitz domains, we will employ a patching technique, which is carried out in the next section. Throughout this section, we will use the following definition for the exponent $\alpha$.

\begin{defn}\label{alpha defn}
Let $s \in (0,1)$, $p > 1$ with $sp \leq d$, and let $\tau$ satisfy $\tau \ge p$ if $sp=d$, and  $\tau \in [p, p^{*}_{s}]$ if $sp<d$. We define 
\begin{equation*}
    \alpha := d + (sp - d)\frac{\tau}{p}.
\end{equation*}
\end{defn}

Define the domain
\begin{equation*}
    \Omega_{n} := (-n,n)^{d-1} \times (0,1)
\end{equation*}
for some  $n \in \mathbb{N}$, where a generic point is written as $x=(x',x_{d}) \in \Omega_{n} $. For each  $k \leq -1$, set
\begin{equation*}
    A_{k} := \{ (x',x_{d}) : \  x' \in (-n,n)^{d-1}, \ 2^{k} \leq x_{d} < 2^{k+1} \} .
\end{equation*}
Then, we have
\begin{equation*}
    \Omega_{n}= \bigcup_{k = - \infty}^{-1} A_{k}  .
\end{equation*}
Again, we  further divide each  $A_{k}$ into disjoint cubes each of side length  $2^k$ (say  $A^{i}_{k}$) (see Figure \ref{fig:figure1}). Then
\begin{equation*}
    A_{k} = \bigcup_{i=1}^{ \sigma_{k}} A^{i}_{k}  ,
\end{equation*}
where  $\sigma_{k} :=  2^{(-k+1)(d-1)} n^{d-1}$. 

\begin{figure}[!ht]
\centering
\begin{tikzpicture}
    \node at (-3.5,1.6) {$A_{k+1}$};
    \node at (-3.4,0.8) {$A_{k}$};
    \node at (-2.3,0.8) {$A^{i}_{k}$};
    
    \draw (-3,2) -- (3,2);
    \draw (-3,2) -- (-3,0);
    \draw (3,2) -- (3,0);
    
    \draw[dashed] (-3,1.2) -- (3,1.2);
    \draw[dashed] (-3,0.5) -- (3,0.5);
    \draw[dashed] (-1.5,1.2) -- (-1.5,0);
    \draw[dashed] (1.5,1.2) -- (1.5,0);
    \draw[dashed] (-0.75,0.5) -- (-0.75,0);
    \draw[dashed] (0.75,0.5) -- (0.75,0);
  \draw[dashed] (-2.25,0.5) -- (-2.25,0);
  \draw[dashed] (2.25,0.5) -- (2.25,0);
    \draw[->] (-3.5,0) -- (3.5,0) node[right] {$\mathbb{R}^{d-1}$};
    \draw[->] (0,0) -- (0,2.3) node[above] {$\mathbb{R}$};
\end{tikzpicture}
\caption{The domain $\Omega_{n}$.}
\label{fig:figure1}
\end{figure}

\smallskip

The next lemma establishes an inequality that holds true for each  $A^{i}_{k}$. This lemma is helpful in proving Lemma  \ref{case 1 flat case} to Lemma  \ref{flat case sp>1 a}.

\begin{lemma}\label{sumineqlemma}
For any  $A^{i}_{k}$, there exists a constant  $C=C(d,p,s, \tau)>0$ such that
   \begin{equation}
 \int_{A^{i}_{k}} \frac{|u(x)|^{\tau}}{x^{\alpha}_{d}} \, dx \leq C [u]^{\tau}_{W^{s,p}(A^{i}_{k})} + C 2^{k(d-\alpha)} |(u)_{A^{i}_{k}}|^{\tau}  .
\end{equation} 
\end{lemma}
\begin{proof}
    Fix any   $A^{i}_{k}$. Then  $A^{i}_{k}$ is a translation of  $(2^{k}, 2^{k+1})^{d}$. Applying Lemma  \ref{sobolev} with  $\Omega = (1,2)^d$ and  $ \lambda = 2^k$, and using translation invariance, we have
\begin{equation*}
    \fint_{A^{i}_{k}} |u(x)-(u)_{A^{i}_{k}}|^{\tau} \,  dx \leq C 2^{k(sp-d) \frac{\tau}{p}} [u]^{\tau}_{W^{s,p}(A^{i}_{k})}  ,
\end{equation*}
where  $C= C(d,p,s, \tau)$ is a constant. Let  $x = (x',x_d) \in A^{i}_{k}$.  Then  $x_d \geq 2^k$, which implies  $ \frac{1}{x_{d}} \leq \frac{1}{2^{k}}$. Therefore, we have
\begin{align*}
    \int_{A^{i}_{k}} \frac{|u(x)|^{\tau}}{x^{\alpha}_{d}} \,  dx & \leq \frac{1}{2^{k \alpha}} \int_{A^{i}_{k}} |u(x)-(u)_{A^{i}_{k}} + (u)_{A^{i}_{k}}|^{\tau}  \, dx \\ &
    \leq \frac{2^{\tau-1}}{2^{k \alpha}} \int_{A^{i}_{k}} |u(x)-(u)_{A^{i}_{k}}|^{\tau}  \, dx + \frac{2^{\tau-1}}{2^{k \alpha}} \int_{A^{i}_{k}} |(u)_{A^{i}_{k}}|^{\tau} \, dx.
    \end{align*}
  Now, using the previous inequality, together with the definition of  $\alpha$ (see Definition \ref{alpha defn}), we obtain
    \begin{align*}
    \int_{A^{i}_{k}} \frac{|u(x)|^{\tau}}{x^{\alpha}_{d}}  \, dx & \leq C \frac{|A^{i}_{k}|}{2^{k \alpha}} \fint_{A^{i}_{k}} |u(x)-(u)_{A^{i}_{k}}|^{\tau}  \, dx + C \frac{|A^{i}_{k}|}{2^{k \alpha}} |(u)_{A^{i}_{k}}|^{\tau}  \\ &
    \leq C \frac{2^{kd}}{2^{k \alpha}} 2^{k(sp-d) \frac{\tau}{p}} [u]^{\tau}_{W^{s,p}(A^{i}_{k})} +  C 2^{k(d-\alpha)} |(u)_{A^{i}_{k}}|^{\tau}  
   \\ & = C [u]^{\tau}_{W^{s,p}(A^{i}_{k})} + C 2^{k(d-\alpha)} |(u)_{A^{i}_{k}}|^{\tau}  ,
\end{align*}   
where  $C$ is a constant depending on  $d, ~ p, ~ s$ and  $\tau$. This proves the lemma.
\end{proof}
\smallskip

The following lemma establishes a connection between the averages of two disjoint cubes  $A^{j}_{k+1}$ and  $A^{i}_{k}$, where  $A^{j}_{k+1}$ is chosen in such a way that  $A^{i}_{k}$ lies below the cube  $A^{j}_{k+1}$. This lemma is helpful in proving Lemma  \ref{case 1 flat case} to Lemma  \ref{flat case sp>1 a}.

\begin{lemma}\label{est2}
    Let  $A^{j}_{k+1}$ be a cube such that   $A^{i}_{k}$ lies below the cube  $A^{j}_{k+1}$. Then
    \begin{equation}
        |(u)_{A^{i}_{k}} - (u)_{A^{j}_{k+1}} |^{\tau} \leq C 2^{k(sp-d) \frac{\tau}{p}} [u]^{\tau}_{W^{s,p}(A^{i}_{k} \cup A^{j}_{k+1})},
    \end{equation}
   where $C$ is a constant independent of $i,~ j$, and $k$.
\end{lemma}
\begin{proof}
Let us consider  $|(u)_{A^{i}_{k}} - (u)_{A^{j}_{k+1}} |^{\tau}$  and using Lemma  \ref{avg} with  $E=A^{i}_{k}$ and  $F = A^{j}_{k+1}$, we get for some constant  $C$ (independent of  $i$, $j$, and $k$),
\begin{equation}\label{compineq1}
|(u)_{A^{i}_{k}} - (u)_{A^{j}_{k+1}} |^{\tau} \leq C \fint_{A^{i}_{k} \cup A^{j}_{k+1} }  |u(x) - (u)_{A^{i}_{k} \cup A^{j}_{k+1}}|^{\tau} \, dx.
\end{equation}
Choose an open set  $\Omega$ such that  $\Omega_{\lambda}$ is a translation of  $ A^{i}_{k} \cup A^{j}_{k+1}$ with scaling parameter  $\lambda=2^{k+1}$ (for example, if  $A^{j}_{k+1}= (2^{k+1}, 2^{k+2})^{d}$ and  $A^{i}_{k}= (2^{k+1},  2^{k+1} + 2^{k})^{d-1} \times (2^{k},2^{k+1})$ with  $k<-1$. Then  $\Omega = ((1, \frac{3}{2})^{d-1} \times (\frac{1}{2},1)) \cup (1,2)^{d} $ and  $\Omega_{\lambda} = A^{i}_{k} \cup A^{j}_{k+1}$ with  $\lambda=2^{k+1}$). Applying Lemma  \ref{sobolev} with this  $\Omega$ and  $\lambda=2^{k+1}$, and using translation invariance, we obtain
\begin{equation}\label{compineq2}
     \fint_{A^{i}_{k} \cup A^{j}_{k+1} }  |u(x) - (u)_{A^{i}_{k} \cup A^{j}_{k+1}}|^{\tau} \, dx \leq C 2^{k(sp-d) \frac{\tau}{p}} [u]^{\tau}_{W^{s,p}(A^{i}_{k} \cup A^{j}_{k+1})}.
\end{equation}
By combining  \eqref{compineq1} and  \eqref{compineq2}, we establish the lemma.
\end{proof}

\begin{remark}
\label{uniform-constant}
\normalfont Observe that for each cube $A^{j}_{k+1}$ there are exactly $2^{d-1}$ cubes $A^{i}_{k}$ lying below it. For different indices $i$, the associated sets $\Omega$ used in the proof differ, but since there are only $2^{d-1}$ possibilities, the constant appearing in Lemma \ref{sobolev} can be chosen uniformly over all such $\Omega$, and hence is independent of $i$. For different $j$, the configuration is simply a translation of the previous one, so the constant is independent of $j$ as well. Finally, by the scaling property in Lemma \ref{sobolev}, the constant does not depend on $k$. Thus the constant in Lemma \ref{est2} is uniform with respect to the indices $i$, $j$, and $k$.
\end{remark}

\smallskip

The next six lemmas, that is, Lemma \ref{case 1 flat case} to Lemma \ref{spd}, will be used in the next section for the proof of Theorem \ref{mainresult} (without optimality of the weight function depending on $\delta_{\Omega}$), Theorem \ref{theorem sp<1}, and Theorem \ref{theorem sp>1}.

\smallskip 

The next lemma proves (A$2$)(a) and (A$2$)(b) of Theorem \ref{mainresult} when $\Omega = \mathbb{R}^{d}_{+}$ with test functions supported on $\Omega_{n}$. For the case $d = 1$, it also proves (after applying it separately on the domains $(0,1)$ and $(1,2)$) (A$1$)(a) when $\Omega = (0,2)$, but only for test functions supported on $(0,1)$. In fact, the next theorem is the key theorem for the proof of parts (A$1$) and (A$2$) of Theorem \ref{mainresult}, because for a general Lipschitz domain, the proof follows by the usual patching technique using a partition of unity. This is done in the next section.

\begin{lemma}\label{case 1 flat case}
    Let $sp = 1$. If $\tau \geq p$ for $d = 1$ and $\tau \in [p, p^{*}_{s}]$ for $d > 1$, then there exists a constant $C = C(d, p, \tau) > 0$ such that
    \begin{equation}
     \left(  \bigintsss_{\Omega_{n}} \frac{|u(x)|^{\tau}}{x^{\alpha}_{d} \ln^{b} \left(\frac{2}{x_{d}} \right)}  \, dx \right)^{\frac{1}{\tau}} \leq  C 
 \|u\|_{W^{s,p}(\Omega_{n})}, \quad \forall \ u \in C^{1}_{c}(\Omega_n),
    \end{equation}
     where  $b= \tau-1$ when  $0 \leq \alpha <1$, and  $b= \tau$ when  $\alpha =1$.
\end{lemma}
\begin{proof}
For each  $x_{d} \in A^{i}_{k}$, we have  $\frac{2}{x_{d}} > \frac{1}{2^{k}}$, which  implies  $ \ln \left(\frac{2}{x_{d}} \right) > (-k) \ln 2$. Using this together with Lemma  \ref{sumineqlemma} and the fact that $\frac{1}{(-k)^{b}} \leq 1$, we obtain
\begin{align}\label{eqn0090}
    \bigintsss_{A^{i}_{k}} \frac{|u(x)|^{\tau}}{x^{\alpha}_{d} \ln^{b} \left(\frac{2}{x_{d}} \right)}  \, dx & \leq \frac{C}{(-k)^{b}} [u]^{\tau}_{W^{s,p}(A^{i}_{k})} + C \frac{2^{k(d-\alpha)}}{(-k)^{b}} |(u)_{A^{i}_{k}}|^{\tau} \nonumber \\ &
     \leq  C [u]^{\tau}_{W^{s,p}(A^{i}_{k})} + C \frac{2^{k(d-\alpha)}}{(-k)^{b}} |(u)_{A^{i}_{k}}|^{\tau}  .
\end{align}
Summing the above inequality from  $i=1$ to  $\sigma_{k}$, we obtain
\begin{equation*}
    \bigintsss_{A_{k}} \frac{|u(x)|^{\tau}}{x^{\alpha}_{d} \ln^{b} \left( \frac{2}{x_{d}} \right)}  \, dx \leq C   \sum_{i=1}^{\sigma_{k}} [u]^{\tau}_{W^{s,p}(A^{i}_{k})} + C \frac{2^{k(d-\alpha)}}{(-k)^{b}}  \sum_{i=1}^{\sigma_{k}} |(u)_{A^{i}_{k}}|^{\tau}.
\end{equation*}
Applying  \eqref{sumineq} with  $\gamma= \frac{\tau}{p}$, we have
\begin{equation}\label{seminorm ineq relation}
    \sum_{i=1}^{\sigma_{k}} [u]^{\tau}_{W^{s,p}(A^{i}_{k})} \leq \left( \sum_{i=1}^{\sigma_{k}} [u]^{p}_{W^{s,p}(A^{i}_{k})} \right)^{\frac{\tau}{p}} \leq [u]^{\tau}_{W^{s,p}(A_{k})} .
\end{equation}
Therefore, we have
\begin{equation*}
    \bigintsss_{A_{k}} \frac{|u(x)|^{\tau}}{x^{\alpha}_{d} \ln^{b} \left( \frac{2}{x_{d}} \right) } \, dx \leq C [u]^{\tau}_{W^{s,p}(A_{k})} +  C \frac{2^{k(d-\alpha)}}{(-k)^{b}}  \sum_{i=1}^{\sigma_{k}} |(u)_{A^{i}_{k}}|^{\tau}.
\end{equation*}
Again, summing the above inequality from  $k=m \in \mathbb{Z}^{-}$ to  $-1$, we get
\begin{equation}\label{eqnn2}
\sum_{k=m}^{-1} \bigintsss_{A_{k}} \frac{|u(x)|^{\tau}}{x^{\alpha}_{d} \ln^{b}\left( \frac{2}{x_{d}} \right)}  \, dx \leq  C \sum_{k=m}^{-1} [u]^{\tau}_{W^{s,p}(A_{k})} + C \sum_{k=m}^{-1} \frac{2^{k(d-\alpha)}}{(-k)^{b}}  \sum_{i=1}^{\sigma_{k}} |(u)_{A^{i}_{k}}|^{\tau}  .
\end{equation}
Let  $A^{j}_{k+1}$ be a cube such that  $A^{i}_{k}$ lies below the cube  $A^{j}_{k+1}$. Independently, using triangle inequality, we have 
\begin{equation*}
    |(u)_{A^{i}_{k}}|^\tau \leq  \left( |(u)_{A^{j}_{k+1}}| + |(u)_{A^{i}_{k}} - (u)_{A^{j}_{k+1}}| \right)^\tau.
\end{equation*}
For  $k \in \mathbb{Z}^{-} \backslash \{-1 \}$, applying Lemma  \ref{estimate} with  $c := \left( \frac{-k}{-k-(1/2)} \right)^{\tau-1}>1$ together with Lemma  \ref{est2} and  $sp=1$, we obtain
\begin{equation}\label{eqq0}
    |(u)_{A^{i}_{k}}|^{\tau} \leq \left( \frac{-k}{-k-(1/2)} \right)^{\tau-1} |(u)_{A^{j}_{k+1}}|^{\tau} + C (-k)^{\tau-1} 2^{k(1-d) \frac{\tau}{p}} [u]^{\tau}_{W^{s,p}(A^{i}_{k} \cup A^{j}_{k+1})}  .
\end{equation}
In the above estimate, we have used the fact that
\begin{equation*}
   (1-c^{\frac{-1}{\tau -1}})^{1-\tau} =  \left( \frac{1}{2} \right)^{1- \tau} (-k)^{\tau-1}.
\end{equation*}
Rewrite  \eqref{eqq0} as
\begin{equation*}
    \frac{|(u)_{A^{i}_{k}}|^{\tau}}{(-k)^{\tau-1}} \leq \frac{ |(u)_{A^{j}_{k+1}}|^{\tau}}{(-k-(1/2))^{\tau-1}} + C 2^{k(1-d) \frac{\tau}{p}} [u]^{\tau}_{W^{s,p}(A^{i}_{k} \cup A^{j}_{k+1})}  .
\end{equation*}
Multiplying the above inequality by  $2^{k(d-\alpha)}$ and using the definition of  $\alpha$ (see Definition \ref{alpha defn}), we get
\begin{equation*}
    \frac{2^{k(d-\alpha)}}{(-k)^{\tau-1}}  |(u)_{A^{i}_{k}}|^{\tau} \leq \frac{2^{k(d-\alpha)}}{ (-k-(1/2))^{\tau-1}} |(u)_{A^{j}_{k+1}}|^{\tau} + C [u]^{\tau}_{W^{s,p}(A^{i}_{k} \cup A^{j}_{k+1})}  .
\end{equation*}
Since there are $2^{d-1}$ such cubes $A^{i}_{k}$ lying below the cube $A^{j}_{k+1}$, summing the above inequality from $i = 2^{\,d-1}(j-1) + 1$ to $2^{\,d-1}j$, we obtain
\begin{align}\label{cubes below the cubes 1}
      \frac{2^{k(d-\alpha)}}{(-k)^{\tau-1}}  \sum_{i=2^{d-1}(j-1)+1}^{2^{d-1}j} |(u)_{A^{i}_{k}}|^{\tau} & \leq 2^{d-1} \frac{2^{k(d-\alpha)}}{ (-k-(1/2))^{\tau-1}} |(u)_{A^{j}_{k+1}}|^{\tau} \nonumber \\ & \quad
       + \ C  \sum_{i=2^{d-1}(j-1)+1}^{2^{d-1}j} [u]^{\tau}_{W^{s,p}(A^{i}_{k} \cup A^{j}_{k+1})}  .
\end{align}
Using the scaling identity $2^{k(d- \alpha)} = \frac{2^{(k+1)(d- \alpha)}}{2^{d- \alpha}}$, we further obtain
\begin{align*}
      \frac{2^{k(d-\alpha)}}{(-k)^{\tau-1}}  \sum_{i=2^{d-1}(j-1)+1}^{2^{d-1}j} |(u)_{A^{i}_{k}}|^{\tau} & \leq \frac{2^{(k+1)(d-\alpha)}}{2^{1-\alpha} (-k-(1/2))^{\tau-1}} |(u)_{A^{j}_{k+1}}|^{\tau} \nonumber \\ & \quad
       + \ C  \sum_{i=2^{d-1}(j-1)+1}^{2^{d-1}j} [u]^{\tau}_{W^{s,p}(A^{i}_{k} \cup A^{j}_{k+1})}  .
\end{align*}
Again, summing the above inequality from  $j=1$ to  $\sigma_{k+1}$ and using the fact that
\begin{equation}\label{summation relation}
  \sum_{j=1}^{\sigma_{k+1}} \left(  \sum_{i=2^{d-1}(j-1)+1}^{2^{d-1}j}  |(u)_{A^{i}_{k}}|^{\tau} \right)  = \sum_{i=1}^{\sigma_{k}}  |(u)_{A^{i}_{k}}|^{\tau},
\end{equation}
we obtain
\begin{align}\label{ineqn11}
\frac{2^{k(d-\alpha)}}{(-k)^{\tau-1}}  \sum_{i=1}^{\sigma_{k}} |(u)_{A^{i}_{k}}|^{\tau} & \leq \frac{2^{(k+1)(d-\alpha)}}{2^{1-\alpha} (-k-(1/2))^{\tau-1}} \sum_{j=1}^{\sigma_{k+1}} |(u)_{A^{j}_{k+1}}|^{\tau} \nonumber \\ & \quad
  + \ C \sum_{j=1}^{\sigma_{k+1}} \left( \sum_{i=2^{d-1}(j-1)+1}^{2^{d-1}j}  [u]^{\tau}_{W^{s,p}(A^{i}_{k} \cup A^{j}_{k+1})} \right).
\end{align}
Applying  \eqref{sumineq} with  $\gamma = \frac{\tau}{p}$ on the last term of the above equation, and using the fact that there are $2^{d-1}$ such cubes $A^{i}_{k}$ lying below the cube $A^{j}_{k+1}$, we have
\begin{align}\label{ineqn22}
    \sum_{j=1}^{\sigma_{k+1}} \left( \sum_{i=2^{d-1}(j-1)+1}^{2^{d-1}j}  [u]^{\tau}_{W^{s,p}(A^{i}_{k} \cup A^{j}_{k+1})} \right) & \leq \left( \sum_{j=1}^{\sigma_{k+1}} \left( \sum_{i=2^{d-1}(j-1)+1}^{2^{d-1}j}  [u]^{p}_{W^{s,p}(A^{i}_{k} \cup A^{j}_{k+1})} \right) \right)^{\frac{\tau}{p}} \nonumber \\ & \leq 2^{d-1} [u]^{\tau}_{W^{s,p}(A_{k} \cup A_{k+1})} .
\end{align}
Combining  \eqref{ineqn11} and  \eqref{ineqn22}, we obtain
\begin{equation*}
   \frac{2^{k(d-\alpha)}}{(-k)^{\tau-1}}  \sum_{i=1}^{\sigma_{k}} |(u)_{A^{i}_{k}}|^{\tau}  \leq \frac{2^{(k+1)(d-\alpha)}}{ 2^{1-\alpha} (-k-(1/2))^{\tau-1}} \sum_{j=1}^{\sigma_{k+1}} |(u)_{A^{j}_{k+1}}|^{\tau} + C  [u]^{\tau}_{W^{s,p}(A_{k} \cup A_{k+1})}  .
\end{equation*}
For simplicity, let  $a_{k} = \sum_{i=1}^{\sigma_{k}} |(u)_{A^{i}_{k}}|^{\tau}$. Then, the above inequality will become
\begin{equation*}
    \frac{2^{k(d-\alpha)}}{(-k)^{\tau-1}} a_{k} \leq \frac{2^{(k+1)(d-\alpha)}}{2^{1-\alpha} (-k-(1/2))^{\tau-1}} a_{k+1} + C  [u]^{\tau}_{W^{s,p}(A_{k} \cup A_{k+1})}  .
\end{equation*}
Summing the above inequality from  $k=m \in \mathbb{Z}^{-}$ to  $-2$, we get
\begin{equation*}
   \sum_{k=m}^{-2} \frac{2^{k(d-\alpha)}}{(-k)^{\tau-1}} a_{k} \leq \sum_{k=m}^{-2} \frac{2^{(k+1)(d-\alpha)}}{2^{1-\alpha} (-k-(1/2))^{\tau-1}} a_{k+1} + C  \sum_{k=m}^{-2} [u]^{\tau}_{W^{s,p}(A_{k} \cup A_{k+1})}  .
\end{equation*}
By changing sides, rearranging, and re-indexing, we get
\begin{multline*}
    \frac{2^{m(d-\alpha)}}{(-m)^{\tau-1}} a_{m} + \sum_{k=m+1}^{-2} \left\{ \frac{1}{(-k)^{\tau-1}} - \frac{1}{2^{1-\alpha}(-k+1/2)^{\tau-1}} \right\} 2^{k(d-\alpha)} a_{k}  \\ \leq C 2^{(-1)(d-\alpha)}a_{-1} 
    + \ C \sum_{k=m}^{-2} [u]^{\tau}_{W^{s,p}(A_{k} \cup A_{k+1})}  .
\end{multline*}
Now, using the lower bound from Lemma \ref{large n ineq} when $\alpha=1$, and the following lower bound when $0\leq \alpha<1$
\begin{equation*}
     \frac{1}{(-k)^{\tau-1}} - \frac{1}{2^{1-\alpha}(-k+1/2)^{\tau-1}} \geq \left(1-\frac{1}{2^{1-\alpha}} \right) \frac{1}{(-k)^{\tau-1}}, \quad \forall \ k \leq -1,
\end{equation*}
and choosing  $-m$ sufficiently large so that  $|(u)_{A^{j}_{m}}|=0$ for all  $j \in \{ 1, \dots, \sigma_{m} \}$, we obtain
\begin{equation*}
    \sum_{k=m}^{-2} \frac{2^{k(d-\alpha)}}{(-k)^{b}} a_{k} \leq  C a_{-1} +  C \sum_{k=m}^{-2} [u]^{\tau}_{W^{s,p}(A_{k} \cup A_{k+1})}  ,
\end{equation*}
where $b=\tau-1$ when $0 \leq \alpha<1$, and $b= \tau$ when $\alpha=1$. Therefore, adding   $2^{(\alpha -d)}a_{-1}$ on  both sides,  we have for some constant  $C= C(d,p, \tau)>0$, 
\begin{equation*}
    \sum_{k=m}^{-1} \frac{2^{k(d-\alpha)}}{(-k)^{b}} a_{k} \leq C  a_{-1} +  C \sum_{k=m}^{-2} [u]^{\tau}_{W^{s,p}(A_{k} \cup A_{k+1})}  .
\end{equation*}
Putting the value of  $a_{k}$ in the above inequality, we obtain
\begin{equation}\label{eqn10}
    \sum_{k=m}^{-1} \frac{2^{k(d-\alpha)}}{(-k)^{b}} \sum_{i=1}^{\sigma_{k}} |(u)_{A^{i}_{k}}|^{\tau} \leq C \sum_{j=1}^{\sigma_{-1}} |(u)_{A^{j}_{-1}}|^{\tau} + C \sum_{k=m}^{-2} [u]^{\tau}_{W^{s,p}(A_{k} \cup A_{k+1})}  .
\end{equation}
Combining  \eqref{eqnn2} and  \eqref{eqn10} yields
\begin{equation}\label{eqqnn}
    \sum_{k=m}^{-1} \bigintsss_{A_{k}} \frac{|u(x)|^{\tau}}{x^{\alpha}_{d} \ln^{b} \left( \frac{2}{x_{d}} \right)}  \, dx \leq  C \sum_{j=1}^{\sigma_{-1}} |(u)_{A^{j}_{-1}}|^{\tau} +  C \sum_{k=m}^{-2} [u]^{\tau}_{W^{s,p}(A_{k} \cup A_{k+1})}  .
\end{equation}
Also,
\begin{equation*}
      \sum_{j=1}^{\sigma_{-1}} |(u)_{A^{j}_{-1}}|^{\tau} =   \sum_{j=1}^{\sigma_{-1}} \Bigg| \frac{1}{|A^{j}_{-1}|} \int_{A^{j}_{-1}} u(x) \, dx \Bigg|^{\tau}
     \leq  C \sum_{j=1}^{\sigma_{-1}} \Bigg| \int_{A^{j}_{-1}} |u(x)|^{p} \, dx \Bigg|^{\frac{\tau}{p}}.
\end{equation*}
Here we have used H$\ddot{\text{o}}$lder's inequality with  $ \frac{1}{p} +  \frac{1}{p'} = 1$. Now applying  \eqref{sumineq} with  $\gamma= \frac{\tau}{p}$, we obtain
\begin{equation*}
    \sum_{j=1}^{\sigma_{-1}} \Bigg| \int_{A^{j}_{-1}} |u(x)|^{p} \, dx \Bigg|^{\frac{\tau}{p}} \leq   \Bigg| \sum_{j=1}^{\sigma_{-1}}\int_{A^{j}_{-1}} |u(x)|^{p} \, dx \Bigg|^{\frac{\tau}{p}}  \leq 
    \Bigg| \int_{\Omega_{n}} |u(x)|^{p} \, dx \Bigg|^{\frac{\tau}{p}} \leq  \|u\|^{\tau}_{L^{p}(\Omega_{n})}.
\end{equation*}
Therefore, by combining above two inequalities, we have
\begin{equation}\label{lp estimates}
     \sum_{j=1}^{\sigma_{-1}} |(u)_{A^{j}_{-1}}|^{\tau} \leq C   \|u\|^{\tau}_{L^{p}(\Omega_{n})}.
\end{equation}
Hence, from  \eqref{eqqnn} and  \eqref{lp estimates}, we obtain
\begin{equation*}
     \sum_{k=m}^{-1} \bigintsss_{A_{k}} \frac{|u(x)|^{\tau}}{x^{\alpha}_{d} \ln^{b} \left( \frac{2}{x_{d}} \right)}  \, dx \leq  C \left( \|u\|^{\tau}_{L^{p}(\Omega_{n})} +  \sum_{k=m}^{-2} [u]^{\tau}_{W^{s,p}(A_{k} \cup A_{k+1})} \right)  .
\end{equation*}
Using the fact that  $\tau \geq p$  and Subsection  \ref{some estimate} (see Appendix  \ref{appen}), we obtain
 \begin{equation*}
      \left( \bigintsss_{\Omega_{n}} \frac{|u(x)|^{\tau}}{x^{\alpha}_{d} \ln^{b}\left( \frac{2}{x_{d}} \right)}  \, dx \right)^{\frac{1}{\tau}} \leq C \left( [u]^{\tau}_{W^{s,p}(\Omega_{n})} + \|u\|^{\tau}_{L^{p}(\Omega_{n})} \right)^{\frac{1}{\tau}}  
       \leq C \left( [u]^{p}_{W^{s,p}(\Omega_{n})} + \|u\|^{p}_{L^{p}(\Omega_{n})} \right)^{\frac{1}{p}} .
 \end{equation*}
 This proves the lemma.
\end{proof}

\smallskip

The next lemma will improve the weight function presented in Lemma  \ref{case 1 flat case} when  $d>1$ and  $\tau \in (p + \frac{1}{d} , p^{*}_{s}]$. This lemma proves (A$2$) of Theorem  \ref{mainresult} when  $\Omega = \mathbb{R}^{d}_{+}$ with test functions restricted on  $\Omega_{n}$ with  $d>1$ and  $\tau \in (p+ \frac{1}{d}, p^{*}_{s}]$. 
\begin{lemma}\label{optlemma}
     Let  $d>1$ and  $sp=1$. Then there exists a constant  $C= C(d,p, \tau)>0$ such that, for any  $\tau \in [p, p^{*}_{s}] $, we have
    \begin{equation}
     \left(  \bigintsss_{\Omega_{n}} \frac{|u(x)|^{\tau}}{x^{\alpha}_{d} \ln^{\beta} \left(\frac{2}{x_{d}} \right)}  \, dx \right)^{\frac{1}{\tau}} \leq   C \|u\|_{W^{s,p}(\Omega_{n})}, \quad \ \forall \ u \in C^{1}_{c}(\Omega_{n}) ,
    \end{equation}
     where  $\beta=p$ when  $\tau=p, ~  \beta = \tau -1$ when  $\tau \in \left(p, p + \frac{1}{d}\right]$, and  $\beta= dp +  (1-d) \tau$ when  $\tau \in \left(p + \frac{1}{d}, p^{*}_{s} \right]$.
\end{lemma}
\begin{proof}
It is sufficient to prove for  $\tau \in \left(p + \frac{1}{d}, p^{*}_{s} \right]$. For  $\tau \in \left[p, p + \frac{1}{d}\right]$, it follows from Lemma  \ref{case 1 flat case}. Let  $\tau_{1} = p+ \frac{1}{d}$, using Lemma  \ref{case 1 flat case}, we have
\begin{equation*}
        \left(  \bigintsss_{\Omega_{n}} \frac{|u(x)|^{\tau_{1}}}{x^{ 1- \frac{1}{p^{*}_{s}}}_{d} \ln^{\tau_{1} -1} \left(\frac{2}{x_{d}} \right)}  \, dx \right)^{\frac{1}{\tau_{1}}} \leq  C \|u\|_{W^{s,p}(\Omega_{n})} .
    \end{equation*}
   Also, from  \eqref{Sobineqq}, we have
    \begin{equation*}
        \left(  \int_{\Omega_{n}} |u(x)|^{p^{*}_{s}}   \, dx \right)^{\frac{1}{p^{*}_{s}}} \leq  C \|u\|_{W^{s,p}(\Omega_{n})} .
    \end{equation*}
    For any  $\tau \in (\tau_{1}, p^{*}_{s})$, there exists  $\theta \in (0,1)$ such that  $\tau = \theta \tau_{1} + (1- \theta)p^{*}_{s}$. Let  $a_{1} ,b_{1} \geq 0$, then by using H$\ddot{\text{o}}$lder's inequality with  $\frac{1}{1/ \theta}+ \frac{1}{1/(1- \theta)}= 1$, we obtain
    \begin{align*}
        \bigintsss_{\Omega_{n}} \frac{|u(x)|^{\tau}}{ x_{d}^{a_{1}} \ln^{b_{1}}\left( \frac{2}{x_{d}} \right)}  \, dx & =  \bigintsss_{\Omega_{n}} \frac{|u(x)|^{\theta \tau_{1} + (1- \theta)p^{*}_{s}}}{ x_{d}^{a_{1}} \ln^{b_{1}}\left( \frac{2}{x_{d}} \right) } \, dx \\ & \leq  \left( \bigintsss_{\Omega_{n}} \frac{|u(x)|^{\tau_{1}}}{ x_{d}^{\frac{a_{1}}{\theta}} \ln^{\frac{b_{1}}{\theta}}\left( \frac{2}{x_{d}} \right)}  \, dx \right)^{\theta}  \left( \int_{\Omega_{n}} |u(x)|^{p^{*}_{s}} \, dx \right)^{1- \theta} .
    \end{align*}
    Let  $\frac{a_{1}}{\theta} = 1- \frac{1}{p^{*}_{s}}$ and  $\frac{b_{1}}{\theta} = \tau_{1} -1 $, we have
    \begin{align*}
         \bigintsss_{\Omega_{n}} \frac{|u(x)|^{\tau}}{ x_{d}^{a_{1}} \ln^{b_{1}}\left( \frac{2}{x_{d}} \right)}  \, dx  & \leq  \left( \bigintsss_{\Omega_{n}} \frac{|u(x)|^{\tau_{1}}}{ x^{1- \frac{1}{p^{*}_{s}}}_{d} \ln^{\tau_{1}-1} \left( \frac{2}{x_{d}} \right)}  \, dx \right)^{\theta} \left( \int_{\Omega_{n}} |u(x)|^{p^{*}_{s}} \, dx \right)^{1- \theta} \\ & \leq C \|u\|^{\theta \tau_{1}}_{W^{s,p}(\Omega_{n})}  \|u\|^{(1- \theta) p^{*}_{s}}_{W^{s,p}(\Omega_{n})}  \leq
         C \|u\|^{\tau}_{W^{s,p}(\Omega_{n})}.
    \end{align*}
    From the definition of  $\theta$, we obtain  $a_{1} = \alpha$ and  $b_{1} = \beta$. Therefore, we have
    \begin{equation*}
       \left( \bigintsss_{\Omega_{n}} \frac{|u(x)|^{\tau}}{ x_{d}^{\alpha} \ln^{\beta}\left( \frac{2}{x_{d}} \right)}  \, dx \right)^{\frac{1}{\tau}} \leq C \|u\|_{W^{s,p}(\Omega_{n})},
    \end{equation*}
    for  $\tau \in \left(p + \frac{1}{d} , p^{*}_{s} \right)$. Finally, when $\tau = p^{*}_{s}$, we have $\alpha=0$ and $\beta=0$, and the lemma follows directly from the fractional Sobolev inequality.
\end{proof}
\smallskip

The next lemma establishes the generalized FBHI \eqref{generalized1} in the subcritical case when the domain is $\mathbb{R}^{d}_{+}$ and the test functions are supported in $\Omega_{n}$. This lemma is useful in the proof of Theorem \ref{theorem sp<1}.

\begin{lemma}\label{flat case sp<1}
     Let  $p>1$ and  $ sp<1$. For any  $\tau \in [p, p^{*}_{s}] $, there exists a constant  $C= C(d,p,s, \tau) > 0$  such that
    \begin{equation}
     \left(  \int_{\Omega_{n}} \frac{|u(x)|^{\tau}}{x^{\alpha}_{d} }  \, dx \right)^{\frac{1}{\tau}} \leq  C 
 \|u\|_{W^{s,p}(\Omega_{n})}, \quad \forall \ u \in C^{1}_{c}(\Omega_n).
    \end{equation}
\end{lemma}
\begin{proof}
Let $p>1$ and $s \in (0,1)$. From Lemma  \ref{sumineqlemma}, we have
\begin{equation*}
     \int_{A^{i}_{k}} \frac{|u(x)|^{\tau}}{x^{\alpha}_{d}}  \, dx \leq C [u]^{\tau}_{W^{s,p}(A^{i}_{k})} + C 2^{k(d-\alpha)} |(u)_{A^{i}_{k}}|^{\tau}  .
\end{equation*}
Summing the above inequality from  $i=1$ to  $\sigma_{k}$ and using  \eqref{seminorm ineq relation}, we obtain
\begin{align*}
    \int_{A_{k}} \frac{|u(x)|^{\tau}}{x^{\alpha}_{d}}  \, dx & \leq C   \sum_{i=1}^{\sigma_{k}} [u]^{\tau}_{W^{s,p}(A^{i}_{k})} + C 2^{k(d-\alpha)}  \sum_{i=1}^{\sigma_{k}} |(u)_{A^{i}_{k}}|^{\tau}  \\ &
    \leq C [u]^{\tau}_{W^{s,p}(A_{k})} +  C 2^{k(d-\alpha)}  \sum_{i=1}^{\sigma_{k}} |(u)_{A^{i}_{k}}|^{\tau}.
\end{align*}
Again, summing the above inequality from  $k=m \in \mathbb{Z}^{-}$ to  $-1$, we obtain, for any $p>1$ and $s \in (0,1)$,
\begin{equation}\label{eqnn1}
\sum_{k=m}^{-1} \int_{A_{k}} \frac{|u(x)|^{\tau}}{x^{\alpha}_{d}}  \, dx \leq  C \sum_{k=m}^{-1} [u]^{\tau}_{W^{s,p}(A_{k})} + C \sum_{k=m}^{-1} 2^{k(d-\alpha)} \sum_{i=1}^{\sigma_{k}} |(u)_{A^{i}_{k}}|^{\tau}  .
\end{equation} 
Now assume $sp<1$. For any  $\tau \in [p, p^{*}_{s}]$, we have  $\alpha \in [0,sp]$. Let  $A^{j}_{k+1}$ be a cube such that  $A^{i}_{k}$ lies below the cube $A^{j}_{k+1}$. Independently, using triangle inequality, we have 
\begin{equation*}
    |(u)_{A^{i}_{k}}|^\tau \leq  \left( |(u)_{A^{j}_{k+1}}| + |(u)_{A^{i}_{k}} - (u)_{A^{j}_{k+1}}| \right)^\tau.
\end{equation*}
For  $k \in \mathbb{Z}^{-} \backslash \{-1 \}$, applying Lemma  \ref{estimate} with  $c :=c_{1}2^{1 - \alpha}>1$ where  $c_{1} = \frac{2}{1+ 2^{1- \alpha}} <1$ together with Lemma  \ref{est2}, we obtain
\begin{equation*}
    |(u)_{A^{i}_{k}}|^{\tau} \leq c_{1} 2^{1- \alpha} |(u)_{A^{j}_{k+1}}|^{\tau} + C  2^{k(sp-d) \frac{\tau}{p}} [u]^{\tau}_{W^{s,p}(A^{i}_{k} \cup A^{j}_{k+1})}  .
\end{equation*}
Multiplying the above inequality by  $2^{k(d-\alpha)}$ and using the definition of  $\alpha$ (see Definition \ref{alpha defn}), we get
\begin{equation*}
   2^{k(d-\alpha)} |(u)_{A^{i}_{k}}|^{\tau} \leq c_{1} 2^{k(d-\alpha) + (1-\alpha)}|(u)_{A^{j}_{k+1}}|^{\tau} + C [u]^{\tau}_{W^{s,p}(A^{i}_{k} \cup A^{j}_{k+1})}  .
\end{equation*}
Since there are $2^{d-1}$ such cubes $A^{i}_{k}$ lying below the cube $A^{j}_{k+1}$, summing the above inequality from $i = 2^{d-1}(j-1) + 1$ to $2^{d-1}j$, we obtain
\begin{align*}
     2^{k(d-\alpha)}  \sum_{i=2^{d-1}(j-1)+1}^{2^{d-1}j} |(u)_{A^{i}_{k}}|^{\tau} & \leq c_{1} 2^{d-1} 2^{k(d-\alpha) + (1- \alpha)} |(u)_{A^{j}_{k+1}}|^{\tau} \\ & \quad
       + \ C  \sum_{i=2^{d-1}(j-1)+1}^{2^{d-1}j} [u]^{\tau}_{W^{s,p}(A^{i}_{k} \cup A^{j}_{k+1})}  .
\end{align*}
Again, summing the above inequality from  $j=1$ to  $\sigma_{k+1}$ and using  \eqref{summation relation}, we obtain
\begin{align*}
2^{k(d-\alpha)}  \sum_{i=1}^{\sigma_{k}} |(u)_{A^{i}_{k}}|^{\tau} & \leq c_{1} 2^{(k+1)(d-\alpha)} \sum_{j=1}^{\sigma_{k+1}} |(u)_{A^{j}_{k+1}}|^{\tau} \nonumber \\ & \quad 
  + \ C \sum_{j=1}^{\sigma_{k+1}} \left( \sum_{i=2^{d-1}(j-1)+1}^{2^{d-1}j}  [u]^{\tau}_{W^{s,p}(A^{i}_{k} \cup A^{j}_{k+1})} \right).
\end{align*}
From  \eqref{ineqn22}, we obtain
\begin{equation*}
  2^{k(d-\alpha)} \sum_{i=1}^{\sigma_{k}} |(u)_{A^{i}_{k}}|^{\tau}  \leq c_{1} 2^{(k+1)(d-\alpha)} \sum_{j=1}^{\sigma_{k+1}} |(u)_{A^{j}_{k+1}}|^{\tau} + C  [u]^{\tau}_{W^{s,p}(A_{k} \cup A_{k+1})}  .
\end{equation*}
For simplicity, let  $a_{k} = \sum_{i=1}^{\sigma_{k}} |(u)_{A^{i}_{k}}|^{\tau}$. Then, the above inequality will become
\begin{equation*}
    2^{k(d-\alpha)} a_{k} \leq c_{1} 2^{(k+1)(d-\alpha)} a_{k+1} + C  [u]^{\tau}_{W^{s,p}(A_{k} \cup A_{k+1})}  .
\end{equation*}
Summing the above inequality from  $k=m \in \mathbb{Z}^{-}$ to  $-2$, we get
\begin{equation*}
   \sum_{k=m}^{-2} 2^{k(d-\alpha)} a_{k} \leq \sum_{k=m}^{-2} c_{1} 2^{(k+1)(d-\alpha)} a_{k+1} + C  \sum_{k=m}^{-2} [u]^{\tau}_{W^{s,p}(A_{k} \cup A_{k+1})}  .
\end{equation*}
By changing sides, rearranging, and re-indexing, we get
\begin{equation*}
   2^{m(d-\alpha)} a_{m} + (1-c_{1}) \sum_{k=m+1}^{-2} 2^{k(d-\alpha)} a_{k}  \leq  C a_{-1} 
    +  C \sum_{k=m}^{-2} [u]^{\tau}_{W^{s,p}(A_{k} \cup A_{k+1})}.
\end{equation*}
Adding  $2^{(\alpha -d)}a_{-1}$ on  both sides and using the fact $(1-c_{1}) 2^{m(d-\alpha)} a_{m} \leq 2^{m(d-\alpha)} a_{m}$, we have for some constant  $C= C(d,p, \tau)>0$,
\begin{equation*}
    (1-c_{1}) \sum_{k=m}^{-1} 2^{k(d-\alpha)} a_{k} \leq C a_{-1} +  C \sum_{k=m}^{-2} [u]^{\tau}_{W^{s,p}(A_{k} \cup A_{k+1})}  .
\end{equation*}
Putting the value of  $a_{k}$ in the above inequality, we obtain
\begin{equation}\label{eqnn99}
 (1-c_{1})   \sum_{k=m}^{-1} 2^{k(d-\alpha)} \sum_{i=1}^{\sigma_{k}} |(u)_{A^{i}_{k}}|^{\tau} \leq C \sum_{j=1}^{\sigma_{-1}} |(u)_{A^{j}_{-1}}|^{\tau} + C \sum_{k=m}^{-2} [u]^{\tau}_{W^{s,p}(A_{k} \cup A_{k+1})}  .
\end{equation}
Combining  \eqref{eqnn1} and  \eqref{eqnn99} yields
\begin{equation}\label{eqqnnn}
    \sum_{k=m}^{-1} \int_{A_{k}} \frac{|u(x)|^{\tau}}{x^{\alpha}_{d} }  \, dx \leq  C \sum_{j=1}^{\sigma_{-1}} |(u)_{A^{j}_{-1}}|^{\tau} +  C \sum_{k=m}^{-2} [u]^{\tau}_{W^{s,p}(A_{k} \cup A_{k+1})}  .
\end{equation}
From  \eqref{lp estimates}, we obtain
\begin{equation*}
     \sum_{k=m}^{-1} \int_{A_{k}} \frac{|u(x)|^{\tau}}{x^{\alpha}_{d} }  \, dx \leq  C \left( \|u\|^{\tau}_{L^{p}(\Omega_{n})} +  \sum_{k=m}^{-2} [u]^{\tau}_{W^{s,p}(A_{k} \cup A_{k+1})} \right)  .
\end{equation*}
Using the fact that  $\tau \geq p$ and Subsection  \ref{some estimate} (see Appendix  \ref{appen}), we obtain
 \begin{equation*}
      \left( \int_{\Omega_{n}} \frac{|u(x)|^{\tau}}{x^{\alpha}_{d} }  \, dx \right)^{\frac{1}{\tau}} \leq C \left( [u]^{\tau}_{W^{s,p}(\Omega_{n})} + \|u\|^{\tau}_{L^{p}(\Omega_{n})} \right)^{\frac{1}{\tau}}  
       \leq C \left( [u]^{p}_{W^{s,p}(\Omega_{n})} + \|u\|^{p}_{L^{p}(\Omega_{n})} \right)^{\frac{1}{p}} .
 \end{equation*}
 This finishes the proof of the lemma.
\end{proof}
\smallskip

The next lemma proves the generalized FBHI  \eqref{generalized2} for supercritical case when the domain is  $\mathbb{R}^{d}_{+}$ and the test functions supported in  $\Omega_{n}$. This lemma is useful in the proof of Theorem  \ref{theorem sp>1}.

\begin{lemma}\label{flat case sp>1 a}
     Let  $sp>1$. For any  $\tau \geq p$ when  $sp=d$, and  $\tau \in \left[p, p^{*}_{s} - \frac{p}{d-sp} \right) $ when  $sp<d$, there exists a constant  $C= C(d,p,s, \tau) > 0$  such that
    \begin{equation}
     \left(  \int_{\Omega_{n}} \frac{|u(x)|^{\tau}}{x^{\alpha}_{d} }  \, dx \right)^{\frac{1}{\tau}} \leq  C 
 [u]_{W^{s,p}(\Omega_{n})}, \quad \forall \ u \in C^{1}_{c}(\Omega_n).
    \end{equation}
\end{lemma}
\begin{proof}
For any  $\tau \in [p, p^{*}_{s} - \frac{p}{d-sp}) $ and  $sp<d$, we have  $\alpha \in (1, sp]$. Let  $A^{j}_{k+1}$ be a cube such that  $A^{i}_{k}$ lies below the cube  $A^{j}_{k+1}$. Independently, using triangle inequality, we have 
\begin{equation*}
    |(u)_{A^{j}_{k+1}}|^\tau \leq  \left( |(u)_{A^{i}_{k}}| + |(u)_{A^{i}_{k}} - (u)_{A^{j}_{k+1}}| \right)^\tau.
\end{equation*}
For  $k \in \mathbb{Z}^{-} \backslash \{-1 \}$, applying Lemma  \ref{estimate} with  $c :=c_{1}2^{\alpha-1}>1$, where  $c_{1} = \frac{2}{1+ 2^{\alpha-1}} <1$ together with Lemma  \ref{est2}, we obtain
\begin{equation*}
    |(u)_{A^{j}_{k+1}}|^{\tau} \leq c_{1} 2^{\alpha-1} |(u)_{A^{i}_{k}}|^{\tau} + C  2^{k(sp-d) \frac{\tau}{p}} [u]^{\tau}_{W^{s,p}(A^{i}_{k} \cup A^{j}_{k+1})}  .
\end{equation*}
Multiplying the above inequality by  $2^{(k+1)(d-\alpha)-d+1}$ and using the definition of  $\alpha$ (see Definition \ref{alpha defn}), we get
\begin{equation*}
   2^{(k+1)(d-\alpha)-d+1} |(u)_{A^{j}_{k+1}}|^{\tau} \leq c_{1} 2^{k(d-\alpha)}|(u)_{A^{i}_{k}}|^{\tau} + C [u]^{\tau}_{W^{s,p}(A^{i}_{k} \cup A^{j}_{k+1})}  .
\end{equation*}
Since there are $2^{d-1}$ such cubes $A^{i}_{k}$ lying below the cube $A^{j}_{k+1}$, we therefore sum the above inequality from $i = 2^{d-1}(j-1) + 1$ to $2^{d-1}j$, and obtain
\begin{align*}
     2^{d-1}2^{(k+1)(d-\alpha)-d+1}  |(u)_{A^{j}_{k+1}}|^{\tau} & \leq c_{1} 2^{k(d-\alpha)} \sum_{i=2^{d-1}(j-1)+1}^{2^{d-1}j}  |(u)_{A^{i}_{k}}|^{\tau} \\ & \quad
       + \ C  \sum_{i=2^{d-1}(j-1)+1}^{2^{d-1}j} [u]^{\tau}_{W^{s,p}(A^{i}_{k} \cup A^{j}_{k+1})}  .
\end{align*}
Again, summing the above inequality from  $j=1$ to  $\sigma_{k+1}$ and using  \eqref{summation relation}, we obtain
\begin{align}\label{ineqn1}
2^{(k+1)(d-\alpha)}  \sum_{j=1}^{\sigma_{k+1}}  |(u)_{A^{j}_{k+1}}|^{\tau} & \leq c_{1} 2^{k(d-\alpha)} 
\sum_{i=1}^{\sigma_{k}} |(u)_{A^{i}_{k}}|^{\tau} \nonumber \\ & \quad
  + \ C \sum_{j=1}^{\sigma_{k+1}} \left( \sum_{i=2^{d-1}(j-1)+1}^{2^{d-1}j}  [u]^{\tau}_{W^{s,p}(A^{i}_{k} \cup A^{j}_{k+1})} \right).
\end{align}
From  \eqref{ineqn22}, we obtain
\begin{equation*}
2^{(k+1)(d-\alpha)}  \sum_{j=1}^{\sigma_{k+1}}  |(u)_{A^{j}_{k+1}}|^{\tau} \leq c_{1} 2^{k(d-\alpha)} 
\sum_{i=1}^{\sigma_{k}} |(u)_{A^{i}_{k}}|^{\tau} + C  [u]^{\tau}_{W^{s,p}(A_{k} \cup A_{k+1})}  .
\end{equation*}
For simplicity, let  $a_{k} = \sum_{i=1}^{\sigma_{k}} |(u)_{A^{i}_{k}}|^{\tau}$. Then, the above inequality will become
\begin{equation*}
    2^{(k+1)(d-\alpha)} a_{k+1} \leq c_{1} 2^{k(d-\alpha)} a_{k} + C  [u]^{\tau}_{W^{s,p}(A_{k} \cup A_{k+1})}  .
\end{equation*}
Summing the above inequality from  $k=m \in \mathbb{Z}^{-}$ to  $-2$, we get
\begin{equation*}
   \sum_{k=m}^{-2} 2^{(k+1)(d-\alpha)} a_{k+1} \leq c_{1} \sum_{k=m}^{-2}  2^{k(d-\alpha)} a_{k} + C  \sum_{k=m}^{-2} [u]^{\tau}_{W^{s,p}(A_{k} \cup A_{k+1})}  .
\end{equation*}
By changing sides, rearranging, and re-indexing, we get
\begin{equation*}
 (1-c_{1}) \sum_{k=m+1}^{-1} 2^{k(d-\alpha)} a_{k}  \leq  2^{m(d-\alpha)} a_{m} +  C \sum_{k=m}^{-2} [u]^{\tau}_{W^{s,p}(A_{k} \cup A_{k+1})}  .
\end{equation*}
Putting the value of  $a_{k}$ in the above inequality, we obtain
\begin{equation*}
 (1-c_{1})   \sum_{k=m+1}^{-1} 2^{k(d-\alpha)} \sum_{i=1}^{\sigma_{k}} |(u)_{A^{i}_{k}}|^{\tau} \leq 2^{m(d- \alpha)} \sum_{j=1}^{\sigma_{m}} |(u)_{A^{j}_{m}}|^{\tau} + C \sum_{k=m}^{-2} [u]^{\tau}_{W^{s,p}(A_{k} \cup A_{k+1})}  .
\end{equation*}
Choose  $m$ large enough such that  $|(u)_{A^{j}_{m}}|=0$ for all  $j \in \{ 1, \dots, \sigma_{m} \}$. Therefore, we have
\begin{equation}\label{eqnn}
    (1-c_{1})   \sum_{k=m}^{-1} 2^{k(d-\alpha)} \sum_{i=1}^{\sigma_{k}} |(u)_{A^{i}_{k}}|^{\tau} \leq C \sum_{k=m}^{-2} [u]^{\tau}_{W^{s,p}(A_{k} \cup A_{k+1})}  .
\end{equation}
Combining  \eqref{eqnn1} with $sp>1$ and  \eqref{eqnn} yields
\begin{equation*}
    \sum_{k=m}^{-1} \int_{A_{k}} \frac{|u(x)|^{\tau}}{x^{\alpha}_{d} }  \, dx \leq   C \sum_{k=m}^{-2} [u]^{\tau}_{W^{s,p}(A_{k} \cup A_{k+1})}  .
\end{equation*}
Therefore, using Subsection  \ref{some estimate} (see Appendix  \ref{appen}), we have
 \begin{equation*}
      \left( \int_{\Omega_{n}} \frac{|u(x)|^{\tau}}{x^{\alpha}_{d} }  \, dx \right)^{\frac{1}{\tau}} \leq C [u]_{W^{s,p}(\Omega_{n})}.
 \end{equation*}
 This proves the lemma.
\end{proof}

\smallskip

The following lemma will be proved by an analogous modification of the proof of Lemma  \ref{case 1 flat case}. This lemma will be helpful in proving (A$3$) of Theorem  \ref{mainresult}.

\begin{lemma}\label{spd}
    Let  $d \geq1, ~  p>1, ~ s \in (0,1)$ be such that  $sp=d$. Then  for any  $\tau \geq p$ and  $u \in C^{1}_{c}(B_{R}(0)^{c})$ for some  $R>0$, we have
    \begin{equation}
    \left( \bigintsss_{B_{R}(0)^c} \frac{|u(x)|^{\tau}}{|x|^{d} \ln^{\tau} \left( \frac{2|x|}{R} \right) }  \, dx \right)^{\frac{1}{\tau}} \leq  C \|u\|_{W^{s,p}(B_{R}(0)^c)}  ,
    \end{equation}
     where  $C= C(d,p, \tau, R)>0$.  
\end{lemma}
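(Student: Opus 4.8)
The plan is to run the scheme of the proof of Theorem~\ref{case 1 flat case}, with the dyadic decomposition of the strip into small cubes replaced by a dyadic decomposition of $B_R(0)^c$ into spherical shells; the point where $sp=d$ enters is that it makes the scaling factor $\lambda^{sp-d}$ in \eqref{sobolevinq} disappear, which is exactly what forces the logarithmic weight. So I would write $B_R(0)^c=\bigcup_{k\ge 0}A_k$ with $A_k:=\{x\in\mathbb{R}^d:\ R2^k\le |x|<R2^{k+1}\}$. Since each $A_k$ is the dilation of $A_0=\{R\le|x|<2R\}$ by $2^k$ and $|A_k|\sim (R2^k)^d$, applying \eqref{sobolevinq} with $sp=d$ gives $\fint_{A_k}|u-(u)_{A_k}|^\tau\,dx\le C[u]^\tau_{W^{s,p}(A_k)}$ with $C$ independent of $k$, and the same bound on each double shell $A_k\cup A_{k+1}=\{R2^k\le|x|<R2^{k+2}\}$.

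Next I would derive the per-shell estimate. On $A_k$ one has $|x|\ge R2^k$ and $\ln(2|x|/R)\ge (k+1)\ln 2$, so splitting $u=(u-(u)_{A_k})+(u)_{A_k}$ and using $|A_k|\sim (R2^k)^d$ together with the previous display yields
\begin{equation*}
\int_{A_k}\frac{|u|^\tau}{|x|^d\ln^\tau(2|x|/R)}\,dx\le C\,[u]^\tau_{W^{s,p}(A_k)}+\frac{C}{(k+1)^\tau}\,|(u)_{A_k}|^\tau .
\end{equation*}

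The heart of the argument, as in Theorem~\ref{case 1 flat case}, is then a telescoping control of $\sum_{k\ge 0}(k+1)^{-\tau}|(u)_{A_k}|^\tau$. From Lemma~\ref{avg} applied to the disjoint shells $A_{k-1},A_k$ (whose volume ratio is a fixed constant) together with \eqref{sobolevinq} on $A_{k-1}\cup A_k$ one gets $|(u)_{A_k}-(u)_{A_{k-1}}|^\tau\le C[u]^\tau_{W^{s,p}(A_{k-1}\cup A_k)}$; inserting this into $|(u)_{A_k}|\le|(u)_{A_{k-1}}|+|(u)_{A_k}-(u)_{A_{k-1}}|$ and applying Lemma~\ref{estimate} with the $k$-dependent constant $c=\big(\tfrac{k+1}{k+1/2}\big)^{\tau-1}>1$ (for which $(1-c^{-1/(\tau-1)})^{1-\tau}=2^{\tau-1}(k+1)^{\tau-1}$), then dividing by $(k+1)^{\tau-1}$, produces
\begin{equation*}
\frac{|(u)_{A_k}|^\tau}{(k+1)^{\tau-1}}\le \frac{|(u)_{A_{k-1}}|^\tau}{(k+\tfrac{1}{2})^{\tau-1}}+C\,[u]^\tau_{W^{s,p}(A_{k-1}\cup A_k)}\qquad(k\ge 1).
\end{equation*}
Summing over $k$ (the sums being finite since $u$ is compactly supported, so the outermost term vanishes), rearranging, and using $\frac{1}{(k+1)^{\tau-1}}-\frac{1}{(k+3/2)^{\tau-1}}\sim c_\tau(k+1)^{-\tau}$ with $c_\tau>0$, I would obtain $\sum_{k\ge 1}(k+1)^{-\tau}|(u)_{A_k}|^\tau\le C\big(|(u)_{A_0}|^\tau+\sum_{k\ge1}[u]^\tau_{W^{s,p}(A_{k-1}\cup A_k)}\big)$; the innermost average is then estimated by Jensen and the Sobolev embedding $W^{s,p}(A_0)\hookrightarrow L^\tau(A_0)$ (valid for all $\tau\ge p$ when $sp=d$, cf.\ Theorem~6.10 of \cite{di2012hitchhikers}) as $|(u)_{A_0}|^\tau\le C\|u\|^\tau_{W^{s,p}(A_0)}\le C\|u\|^\tau_{W^{s,p}(B_R(0)^c)}$.

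Finally I would sum the per-shell estimate over all $k\ge 0$, using \eqref{sumineq} with exponent $\tau/p\ge 1$ together with the bounded overlap of the families $\{A_k\times A_k\}$ and $\{(A_{k-1}\cup A_k)\times(A_{k-1}\cup A_k)\}$ inside $B_R(0)^c\times B_R(0)^c$ to get $\sum_k[u]^\tau_{W^{s,p}(A_k)}\le [u]^\tau_{W^{s,p}(B_R(0)^c)}$ and $\sum_k[u]^\tau_{W^{s,p}(A_{k-1}\cup A_k)}\le C[u]^\tau_{W^{s,p}(B_R(0)^c)}$; this bounds $\int_{B_R(0)^c}\frac{|u|^\tau}{|x|^d\ln^\tau(2|x|/R)}\,dx$ by $C\|u\|^\tau_{W^{s,p}(B_R(0)^c)}$, and taking the $\tau$-th root concludes. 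The hard part is the telescoping step: the constant $c$ in Lemma~\ref{estimate} must be chosen depending on $k$ so that after dividing by $(k+1)^{\tau-1}$ and summing, the surviving coefficients $\frac{1}{(k+1)^{\tau-1}}-\frac{1}{(k+3/2)^{\tau-1}}$ are positive and of the exact order $(k+1)^{-\tau}$ required to absorb the logarithmic weight — this is where both $sp=d$ and the precise form of the weight $\ln^\tau(2|x|/R)$ are essential.
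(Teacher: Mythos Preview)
Your proposal is correct and follows essentially the same route as the paper's proof: dyadic shells $A_k$, the scaled Sobolev--Poincar\'e inequality \eqref{sobolevinq} with $sp=d$, the per-shell estimate, and the same telescoping via Lemma~\ref{avg} and Lemma~\ref{estimate} with the $k$-dependent constant to produce the $(k+1)^{-\tau}$ coefficients. The only cosmetic differences are your index shift ($A_{k-1},A_k$ versus the paper's $A_k,A_{k+1}$) and your control of $|(u)_{A_0}|^\tau$ via the embedding $W^{s,p}(A_0)\hookrightarrow L^\tau(A_0)$, whereas the paper uses the simpler H\"older bound $|(u)_{A_0}|^\tau\le C\|u\|_{L^p}^\tau$.
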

\begin{proof}
    For each  $k \geq 0$, set
    \begin{equation*}
        A_{k} = \{ x \in \mathbb{R}^d : 2^{k} R < |x| \leq 2^{k+1}R \} \ .
    \end{equation*}
    Then, we have
    \begin{equation*}
        B_{R}(0)^c  = \bigcup_{k=0}^{\infty} A_{k} \ .
    \end{equation*}
    Applying Lemma  \ref{sobolev} with  $\Omega = \{ x \in \mathbb{R}^d : ~  R < |x|< 2R \}, ~ \lambda= 2^{k}$ and  $sp=d$, we have
    \begin{equation*}
         \left( \fint_{A_{k}} |u(x)-(u)_{A_{k}}|^{\tau } \, dx \right)^{\frac{1}{\tau}}  \, dx \leq C  [u]_{W^{s,p}(A_{k})}   , 
    \end{equation*}
  where  $C=C(d,p,\tau, R)$ is a constant. Let  $x \in A_{k}$, which implies  $ \frac{1}{|x|} < \frac{1}{2^{k}R}$. Therefore, we have
    \begin{align*}
         \int_{A_{k}} \frac{|u(x)|^{\tau}}{|x|^{d}}  \, dx & \leq \frac{C}{2^{k d}} \int_{A_{k}} |u(x)-(u)_{A_{k}} + (u)_{A_{k}}|^{\tau}  \, dx 
  \\  & \leq \frac{C}{2^{k d}} \int_{A_{k}} |u(x)-(u)_{A_{k}}|^{\tau}  \, dx  + \frac{C}{2^{k d}} \int_{A_{k}} |(u)_{A_{k}}|^{\tau}  \, dx \\ &
    = C \frac{|A_{k}|}{2^{k d}} \fint_{A_{k}} |u(x)-(u)_{A_{k}}|^{\tau}  \, dx + 
   C \frac{|A_{k}|}{2^{k d}}  |(u)_{A_{k}}|^{\tau} \\ &
    \leq C [u]^{\tau}_{W^{s,p}(A_{k})} + C  |(u)_{A_{k}}|^{\tau}  .
    \end{align*}
Here, the constant  $C$ depends on  $d, ~ p, ~ \tau$ and  $R$. For each  $x \in A_{k}$, we have  $ \frac{2|x|}{R}  >  2^{k+1}$. This implies that  $\ln \left( \frac{2|x|}{R} \right) > (k+1) \ln2$. Therefore, using the fact $\frac{1}{(k+1)^{\tau}} \leq 1$, we have
\begin{equation*}
    \bigintsss_{A_{k}} \frac{|u(x)|^{\tau}}{|x|^{d} \ln^{\tau}\left( \frac{2|x|}{R} \right)}  \, dx \leq C \frac{[u]^{\tau}_{W^{s,p}(A_{k})} }{(k+1)^{\tau}} + C \frac{|(u)_{A_{k}}|^{\tau}}{(k+1)^{\tau}} 
     \leq  C [u]^{\tau}_{W^{s,p}(A_{k})} + C \frac{|(u)_{A_{k}}|^{\tau}}{(k+1)^{\tau}}   .\end{equation*}
Summing the above inequality from  $k=0$ to  $ m+1 \in \mathbb{Z}^{+}$, we get
\begin{equation}\label{spd1}
    \bigintsss_{\{ R < |x| \leq 2^{m+2}R \} }  \frac{|u(x)|^{\tau}}{|x|^{d} \ln^{\tau}\left( \frac{2|x|}{R} \right)}  \, dx \leq C \sum_{k=0}^{m+1} [u]^{\tau}_{W^{s,p}(A_{k})} + C   \sum_{k=0}^{m+1} \frac{|(u)_{A_{k}}|^{\tau}}{(k+1)^{\tau}}  .
\end{equation} 
Using the Lemma  \ref{avg} with $E= A_{k}$ and $F=A_{k+1}$, and Lemma  \ref{sobolev} with  $\Omega = \{ x \in \mathbb{R}^d : R < |x| < 2^{2}R \}$ and  $\lambda=2^{k}$, we obtain, for some constant  $C$ (independent of  $k$),
\begin{eqnarray*}
   |(u)_{A_{k}} - (u)_{A_{k+1}} |^{\tau} &\leq& C \fint_{A_{k} \cup A_{k+1} }  |u(x) - (u)_{A_{k} \cup A_{k+1}}|^{\tau}  \, dx 
   \leq C [u]^{\tau}_{W^{s,p}(A_{k} \cup A_{k+1})} .
\end{eqnarray*}
Independently, using triangle inequality, we have 
\begin{equation*}
    |(u)_{A_{k+1}}|^\tau \leq (|(u)_{A_{k}}| + |(u)_{A_{k}} - (u)_{A_{k+1}}|)^\tau.
\end{equation*}
For  $k  \geq 0$, consider the number  $c := (\frac{k+2}{k+3/2})^{\tau-1}>1$. Applying Lemma  \ref{estimate} with  $c$, we obtain
\begin{equation*}
    |(u)_{A_{k+1}}|^{\tau} \leq \left( \frac{k+2}{k+ 3/2} \right)^{\tau -1} |(u)_{A_{k}}|^{\tau} + C (k+2)^{\tau-1} [u]^{\tau}_{W^{s,p}(A_{k} \cup A_{k+1})}.
\end{equation*}
Therefore, we have
\begin{equation*}
   \frac{|(u)_{A_{k+1}}|^{\tau}}{(k+2)^{\tau-1}}  \leq  \frac{|(u)_{A_{k}}|^{\tau}}{(k+ 3/2)^{\tau-1}}  + C  [u]^{\tau}_{W^{s,p}(A_{k} \cup A_{k+1})}.
\end{equation*}
Summing the above inequality from  $k=0$ to  $m \in \mathbb{Z}^{+}$, we get
\begin{equation*}
     \sum_{k=0}^{m} \frac{|(u)_{A_{k+1}}|^{\tau}}{(k+2)^{\tau-1}}  \leq \sum_{k=0}^{m}  \frac{|(u)_{A_{k}}|^{\tau}}{(k+ 3/2)^{\tau-1}}  + C \sum_{k=0}^{m}  [u]^{\tau}_{W^{s,p}(A_{k} \cup A_{k+1})}.
\end{equation*}
By changing sides, rearranging, and re-indexing, we get
\begin{equation*}
    \sum_{k=1}^{m+1} \left\{  \frac{1}{(k+1)^{\tau-1}} -  \frac{1}{(k+ 3/2)^{\tau-1}}  \right\} |(u)_{A_{k}}|^{\tau} \leq C|(u)_{A_{0}}|^{\tau} + C \sum_{k=0}^{m}  [u]^{\tau}_{W^{s,p}(A_{k} \cup A_{k+1})}.
\end{equation*}
Now, for all  $k$, using the lower bound in Lemma \ref{large n ineq}, we arrive at
\begin{equation*}
    \sum_{k=0}^{m+1} \frac{|(u)_{A_{k}}|^{\tau}}{(k+1)^{\tau}} \leq C |(u)_{A_{0}}|^{\tau} + C \sum_{k=0}^{m+1} [u]^{\tau}_{W^{s,p}(A_{k} \cup A_{k+1})}.
\end{equation*}
Also, 
\begin{equation*}
     |(u)_{A_{0}}|^{\tau} =   \Big|  \frac{1}{|A_{0}|} \int_{A_{0}} u(x) \, dx  \Big|^{\tau} \leq C \Big| \int_{A_{0}} |u(x)|^{p} \Big|^{\frac{\tau}{p}} \\ \leq C \|u\|^{\tau}_{L^{p}(B_{R}(0)^{c})}.
\end{equation*} 
Therefore, we have
\begin{equation}\label{spd2}
     \sum_{k=0}^{m+1} \frac{|(u)_{A_{k}}|^{\tau}}{(k+1)^{\tau}} \leq  C \sum_{k=0}^{m+1} [u]^{\tau}_{W^{s,p}(A_{k} \cup A_{k+1})} + C \|u\|^{\tau}_{L^{p}(B_{R}(0)^{c})}.
\end{equation}
Combining  \eqref{spd1}  and  \eqref{spd2}, we obtain
\begin{equation*}
      \bigintsss_{\{ R < |x| \leq 2^{m+2}R \} }  \frac{|u(x)|^{\tau}}{|x|^{d} \ln^{\tau}\left( \frac{2|x|}{R} \right)}  \, dx \leq C \sum_{k=0}^{m+1} [u]^{\tau}_{W^{s,p}(A_{k} \cup A_{k+1})} +  C \|u\|^{\tau}_{L^{p}(B_{R}(0)^c)} .
\end{equation*}
Hence,  using the fact that $\tau \geq p$, we obtain
\begin{align*}
   \left( \bigintsss_{B_{R}(0)^c} \frac{|u(x)|^{\tau}}{|x|^{d} \ln^{\tau} \left( \frac{2|x|}{R} \right) }  \, dx \right)^{\frac{1}{\tau}} & \leq C \left( [u]^{\tau}_{W^{s,p}(B_{R}(0)^{c})} + \|u\|^{\tau}_{L^{p}(B_{R}(0)^{c})} \right)^{\frac{1}{\tau}}  \\ & \leq  C \left( [u]^{p}_{W^{s,p}(B_{R}(0)^{c})} + \|u\|^{p}_{L^{p}(B_{R}(0)^{c})} \right)^{\frac{1}{p}}  .  
\end{align*}
This proves the lemma.
\end{proof}


\section{Proof of the main results}\label{proof of main result}
In this section, we will prove Theorem  \ref{mainresult} without the optimality of weight function depending on $\delta_{\Omega}$, Theorem  \ref{theorem sp<1} and Theorem  \ref{theorem sp>1}. 
\smallskip

Let  $\Omega$ be a bounded Lipschitz domain. Consider the definition of bounded Lipschitz domain defined in Section  \ref{preliminaries}. For simplicity, let  $T_{x}$ be the identity map. Then 
\begin{equation*}
  \Omega \cap B_{r_{x}}(x) = \{ \xi = (\xi',\xi_{d})  : \xi_{d} > \phi_{x}(\xi') \} \cap B_{r_{x}}(x)  
\end{equation*}
and  $\partial \Omega \subset \cup_{x \in \partial \Omega} B_{r_{x}}(x)$. Since $\partial \Omega$ is compact, there exists $x_{1}, \dots, x_{n} \in \partial \Omega$ such that
\begin{equation*}
    \partial \Omega  \subset \bigcup_{i=1}^{n} B_{r_{i}}(x_{i}) ,
\end{equation*}
where $r_{i}=r_{x_{i}}$. From \cite[Theorem 6.78]{leonibook}, we have $W^{s,p}(\Omega) = W^{s,p}_{0}(\Omega)$ for the case $sp\leq1$. Therefore, it is sufficient to prove our main results for $u \in C^{1}_{c}(\Omega)$.

\subsection{Proof of Theorem \ref{mainresult} without optimality of the weight function} Let $u \in C^{1}_{c}(\Omega)$. Assume (A$1$). First, we will prove for the case $\tau \in [p,p^{*}_{s}]$ for $d>1$, and then we consider the case $d=1$. Let $\Omega \subset \cup_{i=0}^{n} \Omega_{i}$, where $\overline{\Omega}_{0} \subset \Omega$ and  $\Omega_{i} = B_{r_{i}}(x_{i})$ for all  $ 1 \leq i \leq n $. Let  $ \{ \eta_{i} \}_{i=0}^{n}$ be the associated partition of unity. Then
\begin{equation*}
    u = \sum_{i=0}^{n} u_{i}, \quad \text{where} \  u_{i} = \eta_{i} u.
\end{equation*}
From Lemma  \ref{testfunc}, we have
\begin{equation*}
 \|u_{i}\|_{W^{s,p}(\Omega)} \leq C \|u\|_{W^{s,p}(\Omega)}, \quad \forall \ 0 \leq i \leq n .
\end{equation*}
Therefore, it is sufficient to prove Theorem  \ref{mainresult} for all  $u_{i}, ~  0 \leq i \leq n$. Since  $\operatorname{supp} u_{0} \subset \Omega_{0}$, and for all  $x \in \Omega_{0}$, 
 \begin{equation*}
     C_{1} \leq \delta_{\Omega}(x) \leq C_{2} \quad \text{for some} \  C_{1}, C_{2} >0 ,
 \end{equation*}
 it follows from the fractional Sobolev inequality \eqref{Sobineqq} that
\begin{equation*} 
 \left( \bigintsss_{\Omega_{0}} \frac{|u_{0}(x)|^{\tau}}{\delta_{\Omega}^{\alpha}(x) \ln^{\beta}\left( \frac{2R}{ \delta_{\Omega}}\right)}  \, dx \right)^{\frac{1}{\tau}} \leq C \left( \int_{\Omega_{0}} |u_{0}(x)|^{\tau} \, dx \right)^{\frac{1}{\tau}} \leq C \|u_{0}\|_{W^{s,p}(\Omega_{0})} .
\end{equation*}
For  $1 \leq i \leq n$, we have $ \operatorname{supp} u_{i} \subset \Omega \cap \Omega_{i} $. Consider the transformation  $F: \mathbb{R}^d \to \mathbb{R}^d$ defined by  $F(x',x_{d}) = (x',x_{d} - \phi_{x_{i}}(x'))$, and let $G = F^{-1}$  (see Subsection  \ref{lipschitzdomain}, Appendix \ref{appen}). Then 
 \begin{equation*}
      \delta_{\Omega}(x) \sim \xi_{d}, \quad \forall \  x \in \Omega \cap \Omega_{i},
 \end{equation*}
where  $F(x) = (\xi_{1}, \dots, \xi_{d})$. Therefore, from Lemma  \ref{optlemma}, we have
 \begin{align*}
    \left( \bigintsss_{\Omega \cap \Omega_{i}} \frac{|u_{i}(x)|^{\tau}}{\delta_{\Omega}^{\alpha}(x) \ln^{\beta}\left( \frac{2R}{\delta_{\Omega}} \right)}  \, dx \right)^{\frac{1}{\tau}} & \sim  \left( \bigintsss_{F(\Omega \cap \Omega_{i})} \frac{|u_{i} \circ G(\xi)|^{\tau}}{\xi_{d}^{\alpha} \ln^{\beta}\left( \frac{2R}{\xi_{d}} \right)} \, d \xi \right)^{\frac{1}{\tau}} \\ & \leq C \|u_{i} \circ G\|_{W^{s,p}(F(\Omega \cap \Omega_{i}))} = C \|u_{i}\|_{W^{s,p}(\Omega \cap \Omega_{i})}.
 \end{align*}
Now consider  $d=1$, in this case we have  $\tau \geq p$. Following the similar approach illustrated above for the case  $d>1$ and using Lemma  \ref{case 1 flat case}, we obtain
\begin{equation*}
      \left( \bigintsss_{\Omega} \frac{|u(x)|^{\tau}}{\delta_{\Omega}(x) \ln^{\tau} \left( \frac{2R}{\delta_{\Omega}(x)} \right)}  \, dx \right)^{\frac{1}{\tau}} \leq C \|u\|_{W^{s,p}(\Omega)}.
\end{equation*}
Now, assume  $\tau < p$. Then by using H$\ddot{\text{o}}$lder's inequality with  $\frac{1}{(p/ \tau)} + \frac{1}{q} = 1 $ and define  $d \nu := \frac{\, dx}{\delta_{\Omega}(x) \ln^{p}\left( \frac{2R}{\delta_{\Omega}(x)} \right) }$, we have
\begin{align*}
    \bigintsss_{\Omega} \frac{|u(x)|^{\tau}}{\delta_{\Omega}(x) \ln ^{p} \left( \frac{2R}{\delta_{\Omega}(x) } \right)} \, dx & = \int_{\Omega} |u(x)|^{\tau} d \nu  \leq \left( \int_{\Omega} |u(x)|^{p} d \nu \right)^{\frac{\tau}{p}} \left( \int_{\Omega} d \nu \right)^{1/q} \\ & = \left( \int_{\Omega} |u(x)|^{p} d \nu \right)^{\frac{\tau}{p}}  \left( \bigintsss_{\Omega} \frac{\, dx}{\delta_{\Omega}(x) \ln^{p}\left( \frac{2R}{\delta_{\Omega}(x)} \right) } \right)^{1/q} \\ &  \leq  C \left( \int_{\Omega} |u(x)|^{p} d \nu \right)^{\frac{\tau}{p}} \\ & 
    =  C \left( \bigintsss_{\Omega} \frac{|u(x)|^{p}}{\delta_{\Omega}(x) \ln^{p} \left( \frac{2R}{\delta_{\Omega}(x)} \right) }  \, dx  \right)^{\frac{\tau}{p}} \leq C \|u\|^{\tau}_{W^{s,p}(\Omega)}.
\end{align*}
Here, we have used that 
\begin{equation*}
   \bigintsss_{\Omega} \frac{\, dx}{\delta_{\Omega}(x) \ln^{p}\left( \frac{2R}{\delta_{\Omega}(x)} \right) }   < \infty .
\end{equation*}
Therefore, we have
\begin{equation*}
 \left(    \bigintsss_{\Omega} \frac{|u(x)|^{\tau}}{\delta_{\Omega}(x) \ln ^{p} \left( \frac{2R}{\delta_{\Omega}(x) } \right)} \, dx \right)^{\frac{1}{\tau}} \leq C \|u\|_{W^{s,p}(\Omega)}.
\end{equation*}
\bigskip

Assume (A$2$). Choose  $0<R_{1} < R_{2}<1$ such that  $R_{2}<R$. Let  $G_{1} = \Omega \backslash  \overline{\Omega}_{R_{2}}$ and  $G_{2}= \Omega_{R_{1}} \backslash \overline{\Omega}_{R}$. Let  $\chi \in C^{\infty}(\Omega)$ such that  $\chi(x)=1$ for all  $x \in \Omega \backslash \Omega_{R_{1}}, ~  0 \leq \chi \leq 1$ and  $\chi(x) = 0$ for all  $x \in  \overline{\Omega}_{R_{2}}$. Then
\begin{equation*}
    u = \chi u + (1- \chi)u = u_{1} + u_{2},
\end{equation*}
 which implies that  $\operatorname{supp} u_{1} \subset \overline{G}_{1}$ and  $\operatorname{supp} u_{2} \subset \overline{G}_{2}$. From Lemma  \ref{testfunc}, we have
\begin{equation*}
    \|u_{i}\|_{W^{s,p}(\Omega)} \leq C \|u\|_{W^{s,p}(\Omega)}, \quad \text{for} \ i=1,2.
\end{equation*}
Therefore, it is sufficient to prove Theorem  \ref{mainresult} for  $u_{1}$ and  $u_{2}$. Consider the transformation  $F: \mathbb{R}^d \to \mathbb{R}^d$ defined by  $F(x',x_{d}) = (x',x_{d} - \phi_{x_{i}}(x'))$, and let $G = F^{-1}$  (see Subsection  \ref{lipschitzdomain}, Appendix \ref{appen}), then  $F(G_{1})$ is a subset of  $\mathbb{R}^{d-1} \times (0,1)$. Also, there exists  $n \in \mathbb{N}$ such that  $\operatorname{supp} u_{1} \circ G \subset (-n,n)^{d-1} \times (0,1)$ on  $F(G_{1})$. From Subsection  \ref{lipschitzdomain}, we have
 \begin{equation*}
     \delta_{\Omega}(x) \sim \xi_{d},
 \end{equation*}
 for all  $x \in G_{1}$, where  $F(x) = (\xi', \xi_{d})$. Therefore, from Lemma  \ref{optlemma}, we have
 \begin{align*}
    \left( \bigintsss_{G_{1}} \frac{|u_{1}(x)|^{\tau}}{\delta_{\Omega}^{\alpha}(x) \ln^{\beta}\left( \frac{2R}{\delta_{\Omega}(x)} \right)}  \, dx \right)^{\frac{1}{\tau}} & \sim  \left( \bigintsss_{F(G_{1})} \frac{|u_{1} \circ G(\xi)|^{\tau}}{\xi_{d}^{\alpha} \ln^{\beta}\left( \frac{2R}{\xi_{d}} \right)} \,  d \xi \right)^{\frac{1}{\tau}} \\ & \leq C \|u_{1} \circ G \|_{W^{s,p}(F(G_{1}))} = C \|u_{1}\|_{W^{s,p}(G_{1})}.
 \end{align*}
Since for all  $x \in \Omega_{2}$, we have
\begin{equation*}
    C_{1} \leq \delta_{\Omega}(x) \leq C_{2}, \quad \text{for some} \  C_{1}, C_{2} >0,
\end{equation*}
it follows from the Sobolev inequality \eqref{Sobineqq} that
\begin{equation*} 
 \left( \bigintsss_{G_{2}} \frac{|u_{2}(x)|^{\tau}}{\delta_{\Omega}^{\alpha}(x) \ln^{\beta}\left( \frac{2R}{ \delta_{\Omega}(x)}\right)}  \, dx \right)^{\frac{1}{\tau}} \leq C \left( \int_{G_{2}} |u_{2}(x)|^{\tau} \, dx \right)^{\frac{1}{\tau}} \leq C \|u_{2}\|_{W^{s,p}(G_{2})} .
\end{equation*}
If  $\tau <p$, following a similar approach as illustrated in the proof of (A$1$) in Theorem  \ref{mainresult}, we obtain
\begin{equation*}
    \left(    \bigintsss_{\Omega} \frac{|u(x)|^{\tau}}{\delta_{\Omega}(x) \ln ^{p} \left( \frac{2R}{\delta_{\Omega}(x) } \right)} \, dx \right)^{\frac{1}{\tau}} \leq C \|u\|_{W^{s,p}(\Omega)}.
\end{equation*}
 \bigskip

Assume (A$3$). Let  $D = \mathbb{R}^d \backslash \Omega$. We may assume that  $0 \in D$ and $\overline{D} \subset B_{R}(0)$ for some  $R>0$. Since  $\partial D = \partial \Omega$, it follows that  $\delta_{D}(x) = \delta_{\Omega}(x)$ for all  $x \in \Omega$. Let  $\chi \in C^{\infty}_{c}(B_{2R}(0))$ be such that  $\chi(x)=1$ for all  $x \in B_{\frac{3R}{2}}(0)$ and  $0 \leq \chi \leq 1$. Then
\begin{equation*}
    u = \chi u + (1-\chi) u = u_{1}+u_{2}  ,
\end{equation*}
which implies that $\operatorname{supp} u_{1} \subset B_{2R}(0) \cap \Omega$ and   $\operatorname{supp} u_{2} \subset B_{R}(0)^c$. 
From Lemma  \ref{testfunc}, we have
\begin{equation*}
    \|u_{i}\|_{W^{s,p}(\Omega)} \leq C \|u\|_{W^{s,p}(\Omega)}, \quad \text{for} \ i=1,2 .
\end{equation*}
Therefore, it is sufficient to prove Theorem \ref{mainresult} for  $u_{1}$ and  $u_{2}$. Since  $\operatorname{supp} u_{2} \subset B_{R}(0)^{c}$ and  $\delta_{\Omega}(x) \sim |x|$ for all  $x \in B_{R}(0)^c$, it follows from Lemma \ref{spd} that
 \begin{align*}
    \left( \bigintsss_{B_{R}(0)^c \cap \{ \delta_{\Omega}(x) >R \}} \frac{|u_{2}(x)|^{p}}{\delta_{\Omega}^{d} (x) \ln^{p} \left( \frac{2\delta_{\Omega}(x)}{R} \right) }  \, dx \right)^{\frac{1}{p}} & \sim \left( \bigintsss_{B_{R}(0)^c} \frac{|u_{2}(x)|^{p}}{|x|^{d} \ln^{p} \left( \frac{2|x|}{R} \right) }  \, dx \right)^{\frac{1}{p}} \\ & \leq  C \|u_{2}\|_{W^{s,p}(B_{R}(0)^{c})} .
 \end{align*}
Also, for all $x \in B_{R}(0)^{c} \cap \{ \delta_{\Omega}(x) < R \}$, we have $C_{1} \leq \ln \left( \frac{2R}{\delta_{\Omega}(x)} \right) \leq C_{2}  $. Therefore,
\begin{equation*}
    \left( \bigintsss_{B_{R}(0)^c \cap \{ \delta_{\Omega}(x) <R \}} \frac{|u_{2}(x)|^{p}}{\delta_{\Omega}^{d} (x) \ln^{p} \left( \frac{2R}{\delta_{\Omega}(x)} \right) }  \, dx \right)^{\frac{1}{p}}  \leq C \left( \int_{B_{R}(0)^c \cap \{ \delta_{\Omega}(x) <R \}} |u_{2}(x)|^{p} \, dx \right)^{\frac{1}{p}}.
\end{equation*}
For all  $x \in B_{2R}(0) \cap \Omega$, we have
\begin{equation*}
    \frac{1}{\delta^{d}_{\Omega}(x) \ln^{p} \left( \frac{2R}{ \delta_{\Omega}(x)} \right)} \leq \frac{1}{\delta^{d}_{\Omega}(x)} \leq \frac{1}{\delta^{d}_{B_{2R}(0) \cap \Omega}(x)}.
\end{equation*}
Therefore, from (T$1$) of Theorem  \ref{dyda} for  $sp=d$ and  $d>1$, we have
\begin{equation*}
    \bigintsss_{B_{2R}(0) \cap \Omega}  \frac{|u_{1}(x)|^{p}}{\delta^{d}_{\Omega} (x) \ln^{p} \left( \frac{2R}{ \delta_{\Omega}(x)} \right)} \, dx \leq \bigintsss_{B_{2R}(0) \cap \Omega} \frac{|u_{1}(x)|^{p}}{\delta^{d}_{B_{2R}(0) \cap \Omega} (x) } \, dx \leq C [u_{1}]_{W^{s,p}(B_{2R}(0) \cap \Omega)}.
\end{equation*}
This finishes the proof of Theorem \ref{mainresult} without the optimality of the weight function.

\subsection{Proof of Theorem  \ref{theorem sp<1}}
Let  $u \in C^{1}_{c}(\Omega)$ and assume $sp<1$. Let  $\Omega \subset \cup_{i=0}^{n} \Omega_{i}$, where  $\overline{\Omega}_{0} \subset \Omega$ and  $\Omega_{i} = B_{r_{i}}(x_{i})$ for all  $ 1 \leq i \leq n $. Let  $ \{ \eta_{i} \}_{i=0}^{n}$ be the associated partition of unity. Then
\begin{equation*}
    u = \sum_{i=0}^{n} u_{i}, \quad \text{where} \  u_{i} = \eta_{i} u.
\end{equation*}
From Lemma  \ref{testfunc}, we have
\begin{equation*}
    \|u_{i}\|_{W^{s,p}(\Omega)} \leq C \|u\|_{W^{s,p}(\Omega)}, \quad \forall \ 0 \leq i \leq n .
\end{equation*}
Therefore, it is sufficient to prove Theorem  \ref{theorem sp<1} for all  $u_{i}, ~ 0 \leq i \leq n$. Since  $\operatorname{supp} u_{0} \subset \Omega_{0}$, and for all  $x \in \Omega_{0}$, 
 \begin{equation*}
     C_{1} \leq \delta_{\Omega}(x) \leq C_{2} \quad \text{for some} \  C_{1}, C_{2} >0 ,
 \end{equation*}
 it follows from the fractional Sobolev inequality \eqref{Sobineqq} that
\begin{equation*} 
 \left( \int_{\Omega_{0}} \frac{|u_{0}(x)|^{\tau}}{\delta_{\Omega}^{\alpha}(x) }  \, dx \right)^{\frac{1}{\tau}} \leq C \left( \int_{\Omega_{0}} |u_{0}(x)|^{\tau} \, dx \right)^{\frac{1}{\tau}} \leq C \|u_{0}\|_{W^{s,p}(\Omega_{0})} .
\end{equation*}
For  $1 \leq i \leq n$, we have $ \operatorname{supp} u_{i} \subset \Omega \cap \Omega_{i} $. Consider the transformation  $F: \mathbb{R}^d \to \mathbb{R}^d$ defined by  $F(x',x_{d}) = (x',x_{d} - \phi_{x_{i}}(x'))$, and let $G = F^{-1}$  (see Subsection  \ref{lipschitzdomain}, Appendix \ref{appen}). Then 
 \begin{equation*}
      \delta_{\Omega}(x) \sim \xi_{d}, \quad \forall \  x \in \Omega \cap \Omega_{i},
 \end{equation*}
where  $F(x) = (\xi_{1}, \dots, \xi_{d})$. Therefore, from Lemma \ref{flat case sp<1}, we have
 \begin{align*}
    \left( \int_{\Omega \cap \Omega_{i}} \frac{|u_{i}(x)|^{\tau}}{\delta_{\Omega}^{\alpha}(x) }  \, dx \right)^{\frac{1}{\tau}} & \sim  \left( \int_{F(\Omega \cap \Omega_{i})} \frac{|u_{i} \circ G(\xi)|^{\tau}}{\xi_{d}^{\alpha}} \,  d \xi \right)^{\frac{1}{\tau}} \\  & \leq C \|u_{i} \circ G\|_{W^{s,p}(F(\Omega \cap \Omega_{i}))} = C \|u_{i}\|_{W^{s,p}(\Omega \cap \Omega_{i})}.
 \end{align*}
Now assume that $\tau < p$. Let $d \nu = \frac{dx}{\delta^{sp}_{\Omega}(x)}$. Following a similar approach to that used in the proof of A($1$) of Theorem \ref{mainresult} for the case $\tau<p$, we obtain
\begin{equation*}
    \int_{\Omega} \frac{|u(x)|^{\tau}}{\delta^{sp}_{\Omega}(x) } \, dx = \int_{\Omega} |u(x)|^{\tau} d \nu   \leq C \|u\|^{\tau}_{W^{s,p}(\Omega)}.
\end{equation*}
This completes the proof of Theorem \ref{theorem sp<1}.

\subsection{Proof of Theorem  \ref{theorem sp>1}}\label{proof of theorem sp>1}
Let  $u \in C^{1}_{c}(\Omega)$ and assume $sp>1$. To prove Theorem  \ref{theorem sp>1}, we will utilize  Lemma  \ref{flat case sp>1 a}. For this, first we consider the case  $\tau=p$ when  $sp<d$, and  $\tau \geq p$ when  $sp=d$. Therefore, following a similar approach illustrated in the proof of Theorem \ref{theorem sp<1} and using Lemma \ref{flat case sp>1 a}, one can obtain the following inequality,
\begin{equation}\label{proof of theorem sp>1 full norm}
\left(    \int_{\Omega  } \frac{|u(x)|^{\tau}}{\delta^{sp}_{\Omega}(x) } \, dx \right)^{\frac{1}{\tau}} \leq C \|u\|_{W^{s,p}(\Omega)},
\end{equation}
for  $ \tau=p $ when  $sp<d$, and  $\tau \geq p$ when  $sp=d$. To prove Theorem  \ref{theorem sp>1}, it is sufficient to prove for some constant  $C>0$
\begin{equation}\label{sp>1 fractional sobolev}
    \int_{\Omega} |u(x)|^{p} \, dx \leq C [u]^{p}_{W^{s,p}(\Omega)}, \quad \ \forall \ u \in W^{s,p}_{0}(\Omega),
\end{equation}
when  $sp>1$. Assume this is not true. Suppose we have a sequence of functions  $\{ u_{n} \}_{n \in \mathbb{N}}$ defined on the domain  $\Omega$, such that  $\int_{\Omega} |u_{n}(x)|^{p} \, dx =1 $ for all  $n \in \mathbb{N}$ and  $[u_{n}]_{W^{s,p}(\Omega)} \to 0$ as  $n \to \infty$. Let  $\mathcal{F}= \{ u_{n}: ~ n \in \mathbb{N} \}$, then  $\mathcal{F}$ satisfies the conditions of Lemma  \ref{compactness} and hence  $\mathcal{F}$ is pre-compact in  $L^{p}(\Omega)$. Therefore, there exists a subsequence (which we will denote as  $\{ u_{n} \}_{n \in \mathbb{N}}$ again) such that   $u_{n} \to u$ pointwise almost everywhere as  $n \to \infty$ and  $\int_{\Omega} |u(x)|^{p} \, dx = 1$. Also, by Fatou's lemma, we have
\begin{equation*}
    [u]_{W^{s,p}(\Omega)} \leq \liminf_{n \to \infty} [u_{n}]_{W^{s,p}(\Omega)} = 0 .
\end{equation*}
Therefore,  $u$ must be a constant function. As  $\int_{\Omega} |u(x)|^{p} \, dx =1 $, we have  $u(x) = |\Omega|^{\frac{-1}{p}}$ almost everywhere. From  \eqref{proof of theorem sp>1 full norm} and  $sp>1$, we have
\begin{equation*}
    \infty = |\Omega|^{\frac{-1}{p}} \left( \int_{\Omega} \frac{1}{\delta^{sp}_{\Omega}(x)} \, dx \right)^{\frac{1}{\tau}} \leq C\|u\|_{W^{s,p}(\Omega)} = C, 
\end{equation*}
which is a contradiction. Therefore, from  \eqref{proof of theorem sp>1 full norm} and  \eqref{sp>1 fractional sobolev}, we have
\begin{equation}\label{ineq frac Hardy sp>k a}
    \left(    \int_{\Omega} \frac{|u(x)|^{\tau}}{\delta^{sp}_{\Omega}(x) } \, dx \right)^{\frac{1}{\tau}} \leq C [u]_{W^{s,p}(\Omega)},
\end{equation}
for  $ \tau=p $ when  $sp<d$, and  $\tau \geq p$ when  $sp=d$. Now assume $sp<d$ and $\tau \in (p,p^{*}_{s})$, there exists $\theta \in (0,1)$ such that $\tau = \theta p + (1-\theta) p^{*}_{s}$. Using H$\ddot{\text{o}}$lder's inequality and $a_{1}>0$, we obtain
\begin{equation*}
    \int_{\Omega} \frac{|u(x)|^{\tau}}{\delta^{a_{1}}_{\Omega}(x)} \, dx =  \int_{\Omega} \frac{|u(x)|^{\theta p + (1- \theta)p^{*}_{s}}}{\delta^{a_{1}}_{\Omega}(x)} \, dx \leq \left( \int_{\Omega} \frac{|u(x)|^{p}}{\delta^{\frac{a_{1}}{\theta}}_{\Omega}(x)} \, dx \right)^{\theta} \left( \int_{\Omega} |u(x)|^{p^{*}_{s}} \, dx \right)^{1- \theta}.
\end{equation*}
Choose $a_{1}>0$ such that $\frac{a_{1}}{\theta} = sp$. Then using  \eqref{ineq frac Hardy sp>k a} with $\tau=p, ~  sp<d$ and fractional Sobolev inequality  \eqref{Sobineqq} with  \eqref{sp>1 fractional sobolev}, we have
\begin{equation*}
    \int_{\Omega} \frac{|u(x)|^{\tau}}{\delta^{a_{1}}_{\Omega}(x)} \, dx \leq C [u]^{\theta p}_{W^{s,p}(\Omega)} [u]^{(1-\theta)p^{*}_{s}}_{W^{s,p}(\Omega)}= C[u]^{\tau}_{W^{s,p}(\Omega)}. 
\end{equation*}
From the definition of $\theta$, we obtain $a_{1} = d+ (sp-d) \frac{\tau}{p} = \alpha$. Therefore, combining all the above inequalities, we get
\begin{equation*}
    \left( \int_{\Omega} \frac{|u(x)|^{\tau}}{\delta^{\alpha}_{\Omega}(x)} \, dx \right)^{\frac{1}{\tau}}  \leq C [u]_{W^{s,p}(\Omega)},
\end{equation*}
for $\tau \in \left[p, p^{*}_{s} \right)$ when $sp<d$, and $\tau \geq p$ when $sp=d$. For $sp<d$ and $\tau= p^{*}_{s}$, we have $\alpha=0$. Therefore, the above inequality follows by applying \eqref{sp>1 fractional sobolev} to the fractional Sobolev inequality \eqref{Sobineqq}.  Now assume $\tau=p$. Using fractional Poincar\'e\ inequality  \eqref{poincare}, one can obtain that Lemma  \ref{sumineqlemma} and Lemma  \ref{est2} holds true for any $s \in (0,1)$ and $\tau=p$. Therefore, Lemma  \ref{flat case sp>1 a} holds true for any $s \in (0,1)$ and $\tau=p$ satisfying $sp>1$. Therefore, following a similar approach as illustrated above for the case $sp \leq d$, one can obtain the following inequality for $sp>1$,
\begin{equation*}
    \left(    \int_{\Omega} \frac{|u(x)|^{p}}{\delta^{sp}_{\Omega}(x) } \, dx \right)^{\frac{1}{p}} \leq C [u]_{W^{s,p}(\Omega)}.
\end{equation*}


\section{Optimality of the weight function}\label{optimality of weight function}
In this section, we prove the optimality of the weight function  in Theorem \ref{mainresult}, among the class of functions depending on $\delta_{\Omega}$. The next lemma establishes an inequality for the Gagliardo seminorm of a radial function defined on an annulus in $\mathbb{R}^{d}$ (which reduces to a ball when the inner radius is zero). This result will be essential for proving the convergence of a sequence of functions in higher dimensions, which will be carried out in the next theorem.

\begin{lemma}\label{d dim to 1 dim}
Let $d \geq 1$ and $p \geq 1$. For $R_{2} > R_{1} \geq 0$, define $A(R_{1},R_{2}) := B_{R_{2}}(0)\setminus \overline{B_{R_{1}}(0)} \subset \mathbb{R}^{d}$,  if $R_{1}=0$, set $A(R_{1},R_{2}) = B_{R_{2}}(0)$. Let $u : A(R_{1},R_{2}) \to \mathbb{R}$ be a radial function, and for any
$r \in (R_{1},R_{2})$ with $r=|x|$, $x \in A(R_{1},R_{2})$, define $\widetilde{u}(r) := u(|x|)$. Then there exists a constant $C = C(d,p,s,R_{2}) > 0$ such that
    \begin{equation*}
        [u]_{W^{s,p}(A(R_{1}, R_{2}))} \leq C [\widetilde{u}]_{W^{s,p}((R_{1},R_{2}))}.
    \end{equation*}
\end{lemma}
\begin{proof}
Let us begin with the expression of $[u]^{p}_{W^{s,p}(A(R_{1},R_{2}))}$. Using the change of variables $x= r_{1} \omega_{1}$ and $y= r_{2} \omega_{2}$, we have
 \begin{multline}\label{polar coordinate change}
     \int_{A(R_{1},R_{2})} \int_{A(R_{1},R_{2})} \frac{|u(x)-u(y)|^{p}}{ |x-y|^{d+sp}} \, dx \, dy \\ = \int_{\mathbb{S}^{d-1}} \int_{\mathbb{S}^{d-1}} \int_{R_{1}}^{R_{2}} \int_{R_{1}}^{R_{2}} \frac{|\widetilde{u}(r_{1})-\widetilde{u}(r_{2})|^{p}}{ |r_{1} \omega_{1} - r_{2} \omega_{2}|^{d+sp} } r^{d-1}_{1} r^{d-1}_{2} \, dr_{1} \, dr_{2} \, d \omega_{1} \, d \omega_{2}.  
 \end{multline}
 Without loss of generality, we may assume $r_{1} > r_{2}$. Let us estimate the following expression:
 \begin{multline*}
   r^{d-1}_{1}  r^{d-1}_{2}  \int_{\mathbb{S}^{d-1}} \int_{\mathbb{S}^{d-1}} \frac{1}{|r_{1} \omega_{1} - r_{2} \omega_{2}|^{d+sp}} \,   d \omega_{1} \, d \omega_{2} \\ =  r^{d-1}_{1}  r^{d-1}_{2}  \int_{\mathbb{S}^{d-1}} \Bigg( \int_{\mathbb{S}^{d-1}} \frac{dw_{2}}{|r_{1} \omega_{1} - r_{2} \omega_{2}|^{d+sp}}   
 \Bigg) \, d \omega_{1}.
 \end{multline*}
 By rotational invariance, we have
 \begin{equation}\label{eqn001}
     \int_{\mathbb{S}^{d-1}} \Bigg( \int_{\mathbb{S}^{d-1}} \frac{dw_{2}}{|r_{1} \omega_{1} - r_{2} \omega_{2}|^{d+sp}}   
 \Bigg) \, d \omega_{1} = |\mathbb{S}^{d-1}|  \int_{\mathbb{S}^{d-1}}  \frac{dw_{2}}{|r_{1} e_{d} - r_{2} \omega_{2}|^{d+sp}},
 \end{equation}
 where $e_{d} = (0, \dots, 0,1)$. By changing the variable $\omega_{2} = (z, t)$ with
 \begin{equation*}
     t = \pm \sqrt{1-|z|^{2}}, \hspace{.3cm} z \in B^{'}_{1}(0) \subset \mathbb{R}^{d-1},
 \end{equation*}
 and using Subsection \ref{changeofvariable} (see Appendix \ref{appen}), we therefore get
 \begin{align*}
     \int_{\mathbb{S}^{d-1}}  \frac{dw_{2}}{|r_{1} e_{d} - r_{2} \omega_{2}|^{d+sp}} & = 
2 \int_{B^{'}_{1}(0)} \frac{1}{((r_{1}- tr_{2})^{2} + r^{2}_{2} |z|^{2} )^{\frac{d+sp}{2}}} \frac{dz}{\sqrt{1-|z|^{2}}} \\ & \leq  2 \int_{B^{'}_{1}(0)} \frac{1}{((r_{1}- r_{2})^{2} + r^{2}_{2} |z|^{2} )^{\frac{d+sp}{2}}} \frac{dz}{\sqrt{1-|z|^{2}}} \\ & = \frac{2}{|r_{1}-r_{2}|^{d+sp}} \bigintsss_{B^{'}_{1}(0)} \frac{1}{\left( 1+ \left| \frac{r_{2}}{r_{1}-r_{2}}z \right|^{2} \right)^{\frac{d+sp}{2}}}   \frac{dz}{\sqrt{1-|z|^{2}}}.
\end{align*}
In the above estimate, we have used $(r_{1} - r_{2}) \leq (r_{1} - tr_{2})$. For simplicity, let $\lambda = \frac{r_{2}}{r_{1}-r_{2}}$. Using the change of variable $\lambda z = \eta$, we have
\begin{equation}\label{eqn002}
     \int_{\mathbb{S}^{d-1}}  \frac{dw_{2}}{|r_{1} e_{n} - r_{2} \omega_{2}|^{d+sp}} \leq \frac{C}{| r_{1}-r_{2}|^{1+sp}r^{d-1}_{2}} \bigintsss_{B^{'}_{\lambda}(0)} \frac{1}{(1+ |\eta|^{2})^{\frac{d+sp}{2}}}   \frac{d \eta}{\sqrt{1-|\frac{\eta}{\lambda}|^{2}}}.
\end{equation}
Combining the inequalities \eqref{eqn001} and \eqref{eqn002}, we have for some constant $C=C(d, R_{2})>0$, 
\begin{multline}\label{estimate on surface integral}
     r^{d-1}_{1}  r^{d-1}_{2}  \int_{\mathbb{S}^{d-1}} \int_{\mathbb{S}^{d-1}} \frac{1}{|r_{1} \omega_{1} - r_{2} \omega_{2}|^{d+sp}} \,  d \omega_{1} \, d \omega_{2} \\ \leq \frac{C}{|r_{1}-r_{2}|^{1+sp}} \int_{B^{'}_{\lambda}(0)} \frac{1}{(1+ |\eta|^{2})^{\frac{d+sp}{2}}}   \frac{d \eta}{\sqrt{1-|\frac{\eta}{\lambda}|^{2}}}.
\end{multline}
It is sufficient to prove that there exists a constant $C>0$, independent of $\lambda$, such that
\begin{equation*}
     \bigintsss_{B^{'}_{\lambda}(0)} \frac{1}{(1+ |\eta|^{2})^{\frac{d+sp}{2}}}   \frac{d \eta}{\sqrt{1-|\frac{\eta}{\lambda}|^{2}}} \leq C.
\end{equation*}
If $\lambda \leq C$ for some positive constant $C$, then the aforementioned inequality is obvious. As $\lambda \to \infty$, using a change of variable in polar coordinates, we obtain
\begin{align*}
     \bigintsss_{B^{'}_{\lambda}(0)} \frac{1}{(1+ |\eta|^{2})^{\frac{d+sp}{2}}}   \frac{d \eta}{\sqrt{1-|\frac{\eta}{\lambda}|^{2}}} & = C \bigintsss_{0}^{\lambda} \frac{t^{d-2}}{(1+ t^{2})^{\frac{d+sp}{2}}}   \frac{d t}{\sqrt{1-\frac{t^{2}}{\lambda^{2}}}} \\ & \leq C  \bigintsss_{0}^{\lambda} \frac{t^{d-2}}{(1+ t^{2})^{\frac{d+sp}{2}}}   \frac{d t}{\sqrt{1-\frac{t}{\lambda}}}.
\end{align*}
Let $0 <a<1$, we have
\begin{align*}
    \bigintsss_{B^{'}_{\lambda}(0)} \frac{1}{(1+ |\eta|^{2})^{\frac{d+sp}{2}}}   \frac{d \eta}{\sqrt{1-|\frac{\eta}{\lambda}|^{2}}} & \leq C  \bigintsss_{0}^{\lambda^{a}} \frac{t^{d-2}}{(1+ t^{2})^{\frac{d+sp}{2}}}   \frac{d t}{\sqrt{1-\frac{t}{\lambda}}} \\ & \quad +  C  \bigintsss_{\lambda^{a}}^{\lambda} \frac{t^{d-2}}{(1+ t^{2})^{\frac{d+sp}{2}}}   \frac{d t}{\sqrt{1-\frac{t}{\lambda}}} =: I_{1}+I_{2}.
\end{align*}
For $I_{1}$, we have
\begin{align*}
  I_{1}  = C \bigintsss_{0}^{\lambda^{a}} \frac{t^{d-2}}{(1+ t^{2})^{\frac{d+sp}{2}}}   \frac{d t}{\sqrt{1-\frac{t}{\lambda}}} & \leq \frac{C}{\sqrt{1-\lambda^{a-1}}} \int_{0}^{\lambda^{a}} \frac{t^{d-2}}{(1+ t^{2})^{\frac{d+sp}{2}}} dt \\ &
  \leq  \frac{C}{\sqrt{1-\lambda^{a-1}}} \int_{0}^{\infty} \frac{t^{d-2}}{(1+ t^{2})^{\frac{d+sp}{2}}} dt = \frac{C}{\sqrt{1-\lambda^{a-1}}}.
\end{align*}
For $I_{2}$, we have
\begin{equation*}
    I_{2} = C \bigintsss_{\lambda^{a}}^{\lambda} \frac{t^{d-2}}{(1+ t^{2})^{\frac{d+sp}{2}}}   \frac{d t}{\sqrt{1-\frac{t}{\lambda}}} \leq \frac{C}{\lambda^{a(2+sp)}} \bigintsss_{\lambda^{a}}^{\lambda} \frac{d t}{\sqrt{1-\frac{t}{\lambda}}} = C \frac{\sqrt{1-\lambda^{a-1}}}{2\lambda^{a(2+sp)-1} }.
\end{equation*}
Choose $0<a<1$ such that $a(2+sp)-1 >0$. Then for large $\lambda$, we have
\begin{equation*}
     \bigintsss_{B^{'}_{\lambda}(0)} \frac{1}{(1+ |\eta|^{2})^{\frac{d+sp}{2}}}   \frac{d \eta}{\sqrt{1-|\frac{\eta}{\lambda}|^{2}}} \leq I_{1}+ I_{2} \leq C.
\end{equation*}
Combining \eqref{polar coordinate change} and \eqref{estimate on surface integral}, we have
\begin{equation*}
     \int_{A(R_{1}, R_{2})} \int_{A(R_{1}, R_{2})} \frac{|u(x)-u(y)|^{p}}{ |x-y|^{d+sp}} \, dx \, dy \leq C \int_{R_{1}}^{R_{2}} \int_{R_{1}}^{R_{2}} \frac{|\widetilde{u}(r_{1}) - \widetilde{u}(r_{2})|^{p}}{|r_{1}-r_{2}|^{1+sp}} \, dr_{1} \, dr_{2}.
\end{equation*}
This proves the lemma.
\end{proof}

\smallskip

\begin{remark}
    \normalfont We thank the referee for noting that the above lemma can alternatively be proved using Catalan’s formula \cite[Formula 1.59]{Samko} in \eqref{eqn001} and proceeding in a similar manner.  A weaker statement of the above lemma can be proved for any $u \in W^{s,p}(A(R_{1}, R_{2}))$, for some $R_{2}>R_{1} \geq 0$, by using interpolation theory \cite{bergh2012interpolation}. For any smooth open set $\Omega \subset \mathbb{R}^d, ~ W^{s,p}(\Omega)$ is an interpolation space between $W^{1,p}(\Omega)$ and  $L^{p}(\Omega)$. Let  $T: W^{1,p}((R_{1},R_{2})) \to W^{1,p}(A(R_{1}, R_{2}))$ be the operator defined by
\begin{equation*}
        T(\widetilde{u}) = u, \quad \text{where } u \text{ is the radial function given by } u(x) = \widetilde{u}(|x|).
\end{equation*}
    Then  $|\nabla u|^{p} = |\widetilde{u}'(|x|)|^{p}$, which implies that  $T$ is a bounded linear operator. Moreover,  $T: L^{p}((R_{1},R_{2})) \to L^{p}(A(R_{1}, R_{2}))$ is also a bounded linear operator. Therefore, by interpolation theory,  $T: W^{s,p}((R_{1},R_{2})) \to W^{s,p}(A(R_{1}, R_{2}))$ is a bounded linear operator. Hence, there exists a constant  $C=C(d,p,s,R_{1}, R_{2})>0$ such that
    \begin{equation*}
        \|u\|_{W^{s,p}(A(R_{1}, R_{2}))} \leq C \|\widetilde{u}\|_{W^{s,p}((R_{1},R_{2}))}.
    \end{equation*}
\end{remark}

\smallskip

The next theorem establishes the existence of a sequence of functions  $\{ u_{\epsilon} \}_{\epsilon>0}$ that converges to the constant function  $1$ in the $W^{s,p}$ norm in the case  $sp=1$. This sequence of functions will help us in proving the optimality of the weight function in the next subsection.

\begin{thm}\label{density theorem}
    Let  $sp=1$. Let  $B_{R}(0) \subset \mathbb{R}^d$, for some  $R>0$,  $ d \geq 1 $, and  $0 <  \epsilon < \frac{R}{2}$. Define  $u_{\epsilon} : B_{R}(0) \to \mathbb{R}$ by
\begin{equation*}
    u_{\epsilon}(x) = \begin{cases}
        1, & |x| \leq R - \epsilon  \\
        \frac{\ln{\left( \frac{2}{\epsilon} \right)}}{\ln{\left( \frac{2}{R-|x|} \right)}}, & R- \epsilon < |x| < R.
    \end{cases}
    \end{equation*}
    Then  $u_{\epsilon}$ converges to the constant function  $1$ in  $W^{s,p}$ norm as  $\epsilon \to 0$.
\end{thm}
\begin{proof}
     It is sufficient to prove for  $R=1$. We will first establish the case  $d=1$. For higher dimensions, the statement follows from Lemma \ref{d dim to 1 dim} with $R_{1}=0$ and $R_{2}=1$. For  $d=1$ and  $R=1$, the above sequence of functions becomes
     \begin{equation*}
    u_{\epsilon}(x) = \begin{cases}
        \frac{\ln{\left( \frac{2}{\epsilon} \right)}}{\ln{\left( \frac{2}{1+x} \right)}}, & x \in (-1,-1+ \epsilon)  \\
        1, & x \in [-1+ \epsilon , 1- \epsilon]  \\
        \frac{\ln{\left( \frac{2}{\epsilon} \right)}}{\ln{\left( \frac{2}{1-x} \right)}}, & x \in ( 1- \epsilon,  1).
    \end{cases}
    \end{equation*}
     We will show
     \begin{equation*}
         \|u_{\epsilon} -1\|_{W^{s,p}((-1,1))} \to 0 \quad  \text{as} \ \epsilon \to 0.
     \end{equation*}
     By the Lebesgue dominated convergence theorem,  $u_{\epsilon} \to 1$ in the $L^{p}$ norm as $\epsilon \to 0$. Since $[u_{\epsilon}-1]_{W^{s,p}((-1,1))} = [u_{\epsilon}]_{W^{s,p}((-1,1))}$, it is sufficient to show
     \begin{equation*}
         [u_{\epsilon}]_{W^{s,p}((-1,1))} \to 0  \quad \text{as} \ \epsilon \to 0.
     \end{equation*}
     Using the change of variables, definition and symmetry of  $u_{\epsilon}$, we have
 \begin{equation*}
     [u_{\epsilon}]^{p}_{W^{s,p}((-1,1))} = 2 [u_{\epsilon}]^{p}_{W^{s,p}((-1, -1+\epsilon))} + 2 \int_{-1}^{-1+\epsilon} \int_{-1+\epsilon}^{1-\epsilon} \frac{|u_{\epsilon}(x) - u_{\epsilon}(y)|^{p}}{|x - y|^{2}} \, dx \, dy =: J_{1} + J_{2}.
 \end{equation*}
 For  $J_{1}$, we have
 \begin{align*}
     J_{1} & = 2 \int_{-1}^{-1+ \epsilon}  \int_{-1}^{-1+ \epsilon} \frac{|u_{\epsilon}(x) - u_{\epsilon}(y)|^{p}}{|x - y|^{2}} \, dx \, dy \\ & = 2  \int_{0}^{\epsilon} \int_{0}^{\epsilon} \frac{|u_{\epsilon}(x-1) - u_{\epsilon}(y-1)|^{p}}{|x - y|^{2}} \, dx \, dy \\ & = 2 \ln^{p} \left( \frac{2}{\epsilon} \right) \bigintsss_{0}^{\epsilon} \bigintsss_{0}^{\epsilon} \frac{|\ln \left( \frac{2}{x} \right) - \ln \left( \frac{2}{y} \right)|^{p}}{ \ln^{p} \left( \frac{2}{x} \right) \ln^{p} \left( \frac{2}{y} \right) |x - y|^{2} } \, dx \, dy.
 \end{align*}
Again, using the change of variables  $\ln \left( \frac{2}{x} \right) = \xi$ and  $\ln \left( \frac{2}{y} \right) = \eta$ in  $J_{1}$, we have
\begin{align*}
    J_{1} & =  C \ln^{p} \left( \frac{2}{\epsilon} \right) \int_{\ln \left( \frac{2}{\epsilon} \right)}^{\infty}\int_{\ln \left( \frac{2}{\epsilon} \right)}^{\infty} \frac{|\xi - \eta|^{p}}{\xi^{p} \eta^{p} |e^{- \xi} - e^{- \eta}|^{2} } e^{-\xi} e^{-\eta} \, d \xi \, d \eta \\ & =  C \ln^{p} \left( \frac{2}{\epsilon} \right) \int_{\ln \left( \frac{2}{\epsilon} \right)}^{\infty}\int_{\ln \left( \frac{2}{\epsilon} \right)}^{\infty} \frac{|\xi - \eta|^{p}}{\xi^{p} \eta^{p}(e^{-2 \xi} + e^{-2\eta} - 2 e^{-(\xi+ \eta)})} e^{-(\xi+\eta)} \, d \xi \, d \eta  \\ & = C  \ln^{p} \left( \frac{2}{\epsilon} \right) \int_{\ln \left( \frac{2}{\epsilon} \right)}^{\infty} \frac{1}{\eta^{p}} \int_{\ln \left( \frac{2}{\epsilon} \right)}^{\infty} \frac{|\xi - \eta|^{p}}{\xi^{p} (e^{\xi - \eta} + e^{\eta - \xi} - 2 )} \, d \xi \, d \eta .
\end{align*}
Since  $\xi \geq \ln \left( \frac{2}{\epsilon} \right) $, we have
\begin{align*}
    J_{1} & \leq C \int_{\ln \left( \frac{2}{\epsilon} \right)}^{\infty} \frac{1}{\eta^{p}} \int_{\ln \left( \frac{2}{\epsilon} \right)}^{\infty} \frac{|\xi - \eta|^{p}}{ (e^{\xi - \eta} + e^{\eta - \xi} - 2 )} \, d \xi \, d \eta \leq C \int_{\ln \left( \frac{2}{\epsilon} \right)}^{\infty} \frac{d \eta}{\eta^{p}} \int_{- \infty}^{\infty} Q(t) \, dt \\ & \leq \frac{C}{\ln^{p-1} \left( \frac{2}{\epsilon} \right)} \int_{- \infty}^{\infty} Q(t) \, dt,
\end{align*}
where  $Q(t) = \frac{|t|^{p}}{e^{t}+ e^{-t}-2} $, which is even function as  $p>1$. For  $|t| \leq 1$, we have  $  Q(t) \leq C \frac{|t|^{p}}{|t|^{2}} = C |t|^{p-2}$, and for  $|t|>1$, we have  $Q(t) \leq C |t|^{p} e^{-|t|}$. Therefore, 
\begin{equation*}
    \int_{-\infty}^{\infty} Q(t) \, dt = \int_{|t| \leq 1} Q(t) \, dt + \int_{|t|>1} Q(t) \, dt < \infty.
\end{equation*}
Hence,
\begin{equation*}
    J_{1} \leq \frac{C}{\ln^{p-1} \left( \frac{2}{\epsilon} \right)}.
\end{equation*}
Now for  $J_{2}$, we have
\begin{align*}
    J_{2} & = 2 \bigintsss_{-1}^{-1+\epsilon} \Bigg|\frac{\ln \left( \frac{2}{\epsilon} \right)} { \ln \left( \frac{2}{1+x} \right) }  -1  \Bigg|^{p} \left( \int_{-1+\epsilon}^{1- \epsilon} \frac{dy}{|x-y|^{2}} \right) \, dx \\ & =  \bigintsss_{-1}^{-1+\epsilon} \Bigg|\frac{\ln \left( \frac{2}{\epsilon} \right)} { \ln \left( \frac{2}{1+x} \right) } -1   \Bigg|^{p} \left( \frac{1}{-1+\epsilon - x} - \frac{1}{1-\epsilon - x} \right) \, dx \\ & \leq C \bigintsss_{-1}^{-1+\epsilon} \frac{| \ln \left( \frac{2}{1+x} \right)-\ln \left( \frac{2}{\epsilon} \right)|^{p}} {  \ln^{p} \left( \frac{2}{1+x} \right) (-1+ \epsilon - x)} \, dx  = C \bigintsss_{0}^{\epsilon} \frac{| \ln \left( \frac{2}{x} \right)-\ln \left( \frac{2}{\epsilon} \right)|^{p}} {  \ln^{p} \left( \frac{2}{x} \right) ( \epsilon - x)} \, dx .
\end{align*}
Using the change of variable  $t = \frac{\ln \left( \frac{2}{x} \right)}{ \ln \left( \frac{2}{\epsilon} \right) } - 1$ in the above inequality, we obtain
\begin{equation*}
    J_{2} \leq  C \ln \left( \frac{2}{\epsilon} \right) \bigintsss_{0}^{\infty} \frac{t^{p}}{(t+1)^{p} \left( \frac{\epsilon e^{(t+1)  \ln \left( \frac{2}{\epsilon} \right)}}{2} -1 \right) } \, dt \leq C \ln \left( \frac{2}{\epsilon} \right) \bigintsss_{0}^{\infty} \frac{t^{p}}{ e^{t  \ln \left( \frac{2}{\epsilon} \right)}  -1} \, dt.
\end{equation*}
Using the further change of variable  $\eta= t \ln \left( \frac{2}{\epsilon} \right)$, we obtain
\begin{equation*}
    J_{2} \leq  \frac{C}{\ln^{p} \left( \frac{2}{\epsilon} \right)} \int_{0}^{\infty} \frac{\eta^{p}}{e^{\eta}-1}  \, d\eta \leq \frac{C}{\ln^{p} \left( \frac{2}{\epsilon} \right)} .
\end{equation*}
Here, we have used the fact that  $\frac{\eta^{p}}{e^{\eta}-1} \in L^{1}(0, \infty)$. Combining the estimates for  $J_{1}$ and  $J_{2}$, we obtain
\begin{equation}\label{limit of sequence for d=1}
     [u_{\epsilon}]^{p}_{W^{s,p}((-1,1))}  \leq  \frac{C}{\ln^{p-1} \left( \frac{2}{\epsilon} \right)} + \frac{C}{\ln^{p} \left( \frac{2}{\epsilon} \right)} \leq  \frac{C}{\ln^{p-1} \left( \frac{2}{\epsilon} \right)}.
\end{equation}
Therefore,  $u_{\epsilon} $ converges to the constant function  $1$ in  $W^{s,p}$ norm as  $\epsilon \to 0$ for  $d=1$. For  $d>1$, using Lemma  \ref{d dim to 1 dim} with $R_{1}=0$ and  $R_{2}=1$,  together with the radial symmetry of  $u_{\epsilon}$, we obtain
\begin{equation}\label{limit of sequence for d>1}
    [u_{\epsilon}]^{p}_{W^{s,p}(B_{1}(0))} \leq C  [u_{\epsilon}]^{p}_{W^{s,p}((-1,1))} \leq  \frac{C}{\ln^{p-1} \left( \frac{2}{\epsilon} \right)},
\end{equation}
which implies  $u_{\epsilon} $ converges to the constant function  $1$ in  $W^{s,p}$ norm as  $\epsilon \to 0$.
\end{proof}

\smallskip

\subsection{Proof of the optimality of the weight function in Theorem \ref{mainresult} for general bounded Lipschitz domains}\label{opt proof for gen domain} To prove the optimality of the weight function in Theorem \ref{mainresult} among the class of functions depending on $\delta_{\Omega}$ for general bounded Lipschitz domains, it is sufficient to establish this for flat boundary case. Therefore, we assume the domain is $\Omega= (0,1)^{d}$ and prove the optimality of weight function in Lemma  \ref{case 1 flat case}. We define a function  $v_{\epsilon}$ on  $(0,1)$ as
\begin{equation}\label{eqn u epsilon}
    v_{\epsilon}(x) = \begin{cases}
    \frac{\ln\left( \frac{2}{\epsilon} \right)}{\ln \left( \frac{2}{x} \right)}, & 0< x < \epsilon \\
        1, & \epsilon \leq x \leq 1-\epsilon \\
        \frac{\ln\left( \frac{2}{\epsilon} \right)}{\ln \left( \frac{2}{1-x} \right)}, & 1-\epsilon< x < 1.
    \end{cases}
\end{equation} 
Clearly, $v_{\epsilon}$ is a translation of the function $u_{\epsilon}$ with $R= \frac{1}{2}$, which was defined in the previous subsection for the case $d=1$. Therefore, from Theorem  \ref{density theorem} with  $sp=1$, it is easy to deduce that 
\begin{equation}\label{opt eqn 1}
    [v_{\epsilon}]^{p}_{W^{s,p}((0,1))} \leq \frac{C}{\ln^{p-1} \left( \frac{2}{\epsilon} \right)}.
\end{equation}
We will define a function  $\phi_{\mu}$ on $(0,1)$ for some  $\mu > 0$ that has compact support in  $(0,1)$ and belongs to  $W^{s,p}_{0}((0,1))$. Therefore, Theorem  \ref{mainresult} will hold for  $\phi_{\mu}$. The following lemma is essential in establishing the optimality of the weight function in Theorem  \ref{mainresult} for a general bounded Lipschitz domain.

\begin{lemma}\label{lemma on opt}
    Let $sp=1$, $\tau>1$, and let $\phi_{\mu} \in W^{s,p}_{0}((0,1)), ~ 0<\mu<1$  be the function defined by 
    \begin{equation}
     \phi_{\mu}(x) = \begin{cases}
    v_{\epsilon} \left( \frac{x- \mu}{1-2 \mu} \right), & \mu< x < 1- \mu \\
        0, & x \in (0, \mu) \cup (1- \mu ,1). 
    \end{cases}
\end{equation}
Then
\begin{equation*}
    \lim_{\mu \to 0} [\phi_{\mu}]^{p}_{W^{s,p}((0,1))} = [v_{\epsilon}]^{p}_{W^{s,p}((0,1))},
\end{equation*}
and
\begin{equation*}
    \lim_{\mu \to 0} \int_{0}^{1} \frac{|\phi_{\mu}(x)- (\phi_{\mu})_{(0,1)}|^{\tau}}{x \ln^{\tau} \left( \frac{2}{x} \right)} \, dx = \int_{0}^{1} \frac{|v_{\epsilon}(x) - (v_{\epsilon})_{(0,1)}|^{\tau}}{x \ln^{\tau} \left( \frac{2}{x} \right)} \, dx.
\end{equation*}
\end{lemma}
\begin{proof}
    Let us calculate the Gagliardo seminorm of  $\phi_{\mu}$,
\begin{multline}\label{eqn phi mu}
   [\phi_{\mu}]^{p}_{W^{s,p}((0,1))} = [\phi_{\mu}]^{p}_{W^{s,p}((\mu, 1- \mu))} + 2  \int_{\mu}^{1-\mu} |\phi_{\mu}(x)|^{p} \int_{0}^{\mu} \frac{1}{|x-y|^{2}} \, dx \, dy \\ + 2  \int_{\mu}^{1-\mu} |\phi_{\mu}(x)|^{p} \int_{1- \mu}^{1} \frac{1}{|x-y|^{2}} \, dx \, dy =: [\phi_{\mu}]^{p}_{W^{s,p}((\mu, 1- \mu))}  + L_{1}+ L_{2} .
\end{multline}
For  $L_{1}$, using the change of variable  $\frac{x- \mu}{1- 2 \mu}=z$, we obtain
\begin{align*}
    L_{1} & = 2  \int_{\mu}^{1-\mu} |\phi_{\mu}(x)|^{p} \int_{0}^{\mu} \frac{1}{|x-y|^{2}} \, dx \, dy \\ & = \int_{\mu}^{1- \mu} \left| v_{\epsilon} \left( \frac{x- \mu}{1-2 \mu} \right) \right|^{p} \left\{ \frac{1}{x- \mu} - \frac{1}{x}   \right\} \, dx \\ & = 2(1- 2 \mu)  \int_{0}^{1} |v_{\epsilon}(z)|^{p} \left\{ \frac{1}{(1-2\mu)z} - \frac{1}{(1-2 \mu)z+ \mu}  \right\} \, dz.
\end{align*} 
Since  $(1- 2 \mu)|v_{\epsilon}(z)|^{p} \left\{ \frac{1}{(1-2\mu)z} - \frac{1}{(1-2 \mu)z+ \mu}  \right\} \leq \frac{|v_{\epsilon}(z)|^{p}}{z} $, and 
\begin{align*}
    \int_{0}^{1} \frac{|v_{\epsilon}(z)|^{p}}{z} \, dz & = \ln^{p} \left( \frac{2}{\epsilon} \right) \int_{0}^{\epsilon} \frac{1}{z \ln^{p} \left( \frac{2}{z} \right)} \, dz + \int_{\epsilon}^{1-\epsilon} \frac{1}{z} dz + \ln^{p} \left( \frac{2}{\epsilon} \right) \int_{1- \epsilon}^{1} \frac{1}{z \ln^{p} \left( \frac{2}{1-z} \right)} \, dz \\ & \leq  \frac{\ln \left( \frac{2}{\epsilon} \right)}{p-1} + \ln \left( \frac{1- \epsilon}{\epsilon} \right) + C \ln^{p} \left( \frac{2}{\epsilon} \right) \int_{1- \epsilon}^{1} \frac{1}{(1-z)\ln^{p} \left( \frac{2}{1-z} \right)} \, dz \\ & \leq C \frac{\ln \left( \frac{2}{\epsilon} \right)}{p-1} + \ln \left( \frac{1- \epsilon}{\epsilon} \right) < \infty,
\end{align*}
this implies that  $\frac{|v_{\epsilon}(z)|^{p}}{z} \in L^{1}((0,1))$. Therefore, using dominated convergence theorem, we have  $L_{1} \to 0$ as  $\mu \to 0$. Similarly,  $L_{2} \to 0$ as  $\mu \to 0$. Moreover, using the change of variables $\frac{x- \mu}{1- 2 \mu}=z_{1}$ and $\frac{y- \mu}{1- 2 \mu}=z_{2}$, we obtain
\begin{equation*}
[\phi_{\mu}]^{p}_{W^{s,p}((\mu, 1- \mu))} =  (1-2 \mu)^{1-sp} [v_{\epsilon}]^{p}_{W^{s,p}((0,1))}.
\end{equation*}
Therefore, taking the limit  $\mu \to 0$ in \eqref{eqn phi mu}, we have
\begin{equation*}
     \lim_{\mu \to 0} [\phi_{\mu}]^{p}_{W^{s,p}((0,1))} = [v_{\epsilon}]^{p}_{W^{s,p}((0,1))}.
\end{equation*}
We observe that
\begin{equation*}
(\phi_{\mu})_{(0,1)} = \int_{\mu}^{1-\mu}  v_{\epsilon} \left( \frac{x- \mu}{1-2 \mu} \right) \,  dx = (1-2 \mu) \int_{0}^{1} v_{\epsilon}(x) \, dx = (1- 2 \mu)(v_{\epsilon})_{(0,1)} .
\end{equation*}
Since  $\frac{1}{x \ln^{\tau} \left( \frac{2}{x} \right) } \in L^{1}((0,1))$, the dominated convergence theorem yields
\begin{equation*}
    \lim_{\mu \to 0} \int_{0}^{1} \frac{|\phi_{\mu}(x)- (\phi_{\mu})_{(0,1)}|^{\tau}}{x \ln^{\tau} \left( \frac{2}{x} \right)} \, dx = \int_{0}^{1} \frac{|v_{\epsilon}(x) - (v_{\epsilon})_{(0,1)}|^{\tau}}{x \ln^{\tau} \left( \frac{2}{x} \right)} \, dx.
\end{equation*}
This completes the proof.
\end{proof}

The function $\phi_{\mu}$ defined in the above lemma has compact support in $(0,1)$ and a finite Gagliardo seminorm, which implies that  $\phi_{\mu} \in W^{s,p}_{0}((0,1))$.  Moreover, $\phi_{\mu} \to 1$ in $L^{p}((0,1))$ as $\epsilon, \, \mu \to 0$. Using Lemma~\ref{lemma on opt} and inequality~\eqref{opt eqn 1} with $sp = 1$, we have
\begin{equation*}
    \lim_{\epsilon \to 0}   \lim_{\mu \to 0} [\phi_{\mu}]^{p}_{W^{s,p}((0,1))} = \lim_{\epsilon \to 0} [v_{\epsilon}]^{p}_{W^{s,p}((0,1))} = 0.
\end{equation*}
Since $\phi_{\mu} \in W^{s,p}_{0}((0,1))$ for  $\mu, \, \epsilon >0$ and   $W^{s,p}_{0}((0,1))$ is a closed subspace of  $W^{s,p}((0,1))$, the constant function  $1 \in W^{s,p}_{0}((0,1))$. Dyda and Kijaczko in  \cite[Lemma 13]{dyda2022} proved that  $W^{s,p}(\Omega)= W^{s,p}_{0}(\Omega)$ if and only if the constant function  $1 \in W^{s,p}_{0}(\Omega)$. In fact, for the sake of completeness of this article, we present a proof of this result in Lemma  \ref{density lemma appen} (See Appendix  \ref{appen}). Therefore,  $W^{s,p}((0,1)) = W^{s,p}_{0}((0,1))$ in the case  $sp=1$. The next lemma extends this result to a general bounded Lipschitz domain for the case  $sp=1$. 

\smallskip

\begin{lemma}\label{lemma on Wsp}
    Let $\Omega$ be a bounded Lipschitz domain in $\mathbb{R}^{d}$, and let   $p>1, ~ s \in (0,1)$ be such that  $sp=1$. Then  $W^{s,p}_{0}(\Omega) = W^{s,p}(\Omega)$.
\end{lemma}
\begin{proof}
    It is sufficient to prove the result for the flat boundary case; for a general bounded Lipschitz domain, it follows from the usual patching argument. Define a function  $\Phi$ on  $(0,1)^{d}$ by $\Phi(x) = \phi_{\mu}(x_{1}) \cdots \phi_{\mu}(x_{d}) $. Then  $\Phi \in W^{s,p}_{0}((0,1)^{d})$. Moreover, using Subsection \ref{slicing lemma} (See Appendix  \ref{appen}), we obtain
 \begin{equation*}
     [\Phi]^{p}_{W^{s,p}((0,1)^{d})} \leq C [\phi_{\mu}]^{p}_{W^{s,p}((0,1))}.
 \end{equation*}
 From Lemma \ref{lemma on opt} and inequality \eqref{opt eqn 1}, we conclude that  $[\Phi]^{p}_{W^{s,p}((0,1)^{d})} \to 0$ as  $\epsilon,  \mu \to 0$ when  $sp=1$. Furthermore,  $\Phi \to 1$ in $L^{p}((0,1)^{d})$ as  $\epsilon,  \mu \to 0$. Therefore, the constant function  $1$ belongs to $W^{s,p}_{0}((0,1)^{d})$ in the case  $sp=1$. By Lemma \ref{density lemma appen} (See Appendix  \ref{appen}), we conclude that $W^{s,p}_{0}((0,1)^{d}) = W^{s,p}((0,1)^{d})$ when  $sp=1$. This completes the proof of the lemma.
\end{proof}

The following lemma establishes the optimality of the weight function in Theorem \ref{mainresult} for a general bounded Lipschitz domain in the case $\tau \geq p$ when $d=1$, and $\tau =p$ with $d>1$, among the class of functions depending on $\delta_{\Omega}$. The previous two lemmas, Lemma \ref{lemma on opt} and Lemma \ref{lemma on Wsp}, serve as the key ingredients for proving the next lemma.

\begin{lemma}\label{lemma proof of opt gen domain}
Let $\Omega$ be a bounded Lipschitz domain in $\mathbb{R}^{d}$. Then the weight function in Theorem \ref{mainresult} is optimal, among the class of functions depending on $\delta_{\Omega}$, for $\tau \geq p$ when $d=1$ and for $\tau =p$ when $d>1$.
\end{lemma}
\begin{proof}
To prove the optimality of the weight function in Theorem \ref{mainresult} among the class of functions depending on $\delta_{\Omega}$, it suffices to establish this for the case of a flat boundary, as presented in Lemma \ref{case 1 flat case}. The conclusion for a general bounded Lipschitz domain then follows from a patching argument. Consider the function $\widetilde{\Phi}_{\epsilon, \mu}$ defined on $(0,1)^{d}$ by
    \begin{equation}\label{Defn Phi (epsilon, mu)}
        \widetilde{\Phi}_{\epsilon, \mu}(x',x_{d}) = \phi_{\mu}(x_{d}),
    \end{equation}
where $\phi_{\mu}$ is as defined in Lemma \ref{lemma on opt}.    Clearly, $\widetilde{\Phi}_{\epsilon, \mu} \in W^{s,p}((0,1)^{d})$. By applying Lemma \ref{lemma on Wsp}, we obtain $ \widetilde{\Phi}_{\epsilon, \mu} \in W^{s,p}_{0}((0,1)^{d})$ in the case $sp=1$. Therefore, Lemma \ref{case 1 flat case} applies to $\widetilde{\Phi}_{\epsilon,\mu}$, and together with \eqref{poincare}, for $\tau \geq p$ when $d=1$ and $\tau=p$ when $d>1$, we obtain
\begin{align*}
    \left(  \bigintsss_{(0,1)^{d}} \frac{ |\widetilde{\Phi}_{\epsilon, \mu}(x) - (\widetilde{\Phi}_{\epsilon, \mu})_{(0,1)^{d}}|^{\tau}}{x_{d} \ln^{\tau} \left(\frac{2}{x_{d}} \right)} \, dx \right)^{\frac{1}{\tau}}  \leq  C  [\widetilde{\Phi}_{\epsilon, \mu}]_{W^{s,p}((0,1)^{d})}.
\end{align*}
Now consider the function $v_{\epsilon}$ defined in \eqref{eqn u epsilon}. We compute
\begin{equation*}
(v_{\epsilon})_{(0,1)} = \int_{0}^{1}  v_{\epsilon}(y) \,  dy = 2 \ln \left( \frac{2}{\epsilon} \right) \bigintssss_{0}^{\epsilon} \frac{1}{\ln \left( \frac{2}{y} \right)} \, dy + (1- 2\epsilon) = 1+ O \left( \epsilon \right).
\end{equation*}
Therefore, for $\tau>1$,
\begin{align*}
    \bigintssss_{0}^{1} \frac{ |v_{\epsilon}(x_{d})- (v_{\epsilon})_{(0,1)}|^{\tau}}{x_{d} \ln^{\tau} \left( \frac{2}{x_{d}} \right)}  \, dx_{d} &  =  \bigintssss_{0}^{1} \frac{|v_{\epsilon}(x_{d})- 1+ O \left( \epsilon  \right)|^{\tau}}{x_{d} \ln^{\tau} \left( \frac{2}{x_{d}} \right)} \, dx_{d} \\ &  = C  \bigintssss_{0}^{1} \frac{|v_{\epsilon}(x_{d})-1|^{\tau}}{x_{d} \ln^{\tau} \left( \frac{2}{x_{d}} \right)} \, dx_{d} + O \left( \epsilon  \right) \\ &  =  C   \bigintssss_{0}^{\epsilon} \frac{ \left| \ln \left( \frac{2}{\epsilon} \right)- \ln \left( \frac{2}{x_{d}} \right) \right|^{\tau}}{x_{d} \ln^{2 \tau} \left( \frac{2}{x_{d}} \right)} \, dx_{d} + O \left( \epsilon \right).
\end{align*}
Using the change of variable $\frac{\ln \left( \frac{2}{x_{d}} \right)}{ \ln \left( \frac{2}{\epsilon} \right)} = t$ and the fact that $\int_{1}^{\infty} \frac{(t-1)^{\tau}}{t^{2 \tau}} dt < \infty$, we obtain
\begin{align}\label{eqn opt 1}
 \bigintsss_{0}^{1} \frac{ |v_{\epsilon}(x_{d})- (v_{\epsilon})_{(0,1)}|^{\tau}}{x_{d} \ln^{\tau} \left( \frac{2}{x_{d}} \right)} \, dx_{d}  & =   \frac{C}{\ln^{\tau -1} \left( \frac{2}{\epsilon} \right)} \int_{1}^{\infty} \frac{(t-1)^{\tau}}{t^{2 \tau}} \, dt + O \left( \epsilon \right) \nonumber \\ & =   \frac{C}{\ln^{\tau-1} \left( \frac{2}{\epsilon} \right)} + O \left( \epsilon \right).
\end{align}
Therefore, from Lemma \ref{lemma on opt}, we obtain
\begin{align*}
  \lim_{\mu \to 0}   \bigintssss_{(0,1)^{d}} \frac{ |\widetilde{\Phi}_{\epsilon, \mu}(x) - (\widetilde{\Phi}_{\epsilon, \mu})_{(0,1)^{d}}|^{\tau}}{x_{d} \ln^{\tau} \left(\frac{2}{x_{d}} \right)} \, dx &  =   \lim_{\mu \to 0} \bigintssss_{0}^{1} \frac{ |\phi_{\mu}(x_{d}) - (\phi_{\mu})_{(0,1)}|^{\tau}}{x_{d} \ln^{\tau} \left(\frac{2}{x_{d}} \right)} \, dx_{d} \\ & =  \bigintssss_{0}^{1} \frac{ |v_{\epsilon}(x_{d})- (v_{\epsilon})_{(0,1)}|^{\tau}}{x_{d} \ln^{\tau} \left( \frac{2}{x_{d}} \right)} \, dx_{d} = \frac{C}{\ln^{\tau-1} \left( \frac{2}{\epsilon} \right)} + O \left( \epsilon \right).
\end{align*}
Now suppose that there exists a function  $f$ with the property that  $|f(x_{d})| \to \infty$ as $x_{d} \to 0$. Additionally, assume that there is a further improvement of the inequality in Lemma \ref{case 1 flat case} for $d=1$ and $\tau \geq p$, and for $d>1$ with $\tau = p$. Consequently, the same improvement also applies to \eqref{Theorem 2 Remark} with $sp=1$, $\Omega= (0,1)^{d}$, and $\delta_{\Omega}(x) = x_{d}$. More precisely, we assume that the following inequality holds.
\begin{equation}\label{optcond1}
      \left(  \bigintsss_{(0,1)^{d}} \frac{|f(x_{d})| |u(x)-(u)_{(0,1)^{d}}|^{\tau}}{x_{d} \ln^{\tau} \left(\frac{2}{x_{d}} \right)} \, dx \right)^{\frac{1}{\tau}} \leq  C  [u]_{W^{s,p}((0,1)^{d})}, \quad \forall \ u \in W^{s,p}_{0}((0,1)^{d})  .
\end{equation}
Consider the above sequence of functions $\widetilde{\Phi}_{\epsilon,\mu}$. Using a similar computation as above, we have
\begin{equation}\label{weightconv1}
  \lim_{\mu \to 0}  \bigintsss_{(0,1)^{d})} \frac{|f(x_{d})| | \widetilde{\Phi}_{\epsilon, \mu}(x)- (\widetilde{\Phi}_{\epsilon, \mu})_{(0,1)^{d}}|^{\tau}}{x_{d} \ln^{\tau} \left(\frac{2}{x_{d}} \right)} \, dx \geq C \frac{|f(x_{d,\epsilon})|}{\ln^{\tau - 1}\left( \frac{2}{\epsilon} \right) },
\end{equation}
where  $x_{d,\epsilon} = \epsilon$. First assume that $d=1$ and $\tau \geq p$. Note that in this case $\widetilde{\Phi}_{\epsilon, \mu}(x_{d})= \phi_{\mu}(x_{d})$. Let  $\tau_{1} > \tau > p$. Then there exists $\theta \in (0,1)$ such that  $\tau= \theta p + (1-\theta)\tau_{1}$. Using H$\ddot{\text{o}}$lder's inequality with  $\frac{1}{1/ \theta}+ \frac{1}{1/(1-\theta)}=1$, we obtain
\begin{align*}
   & \bigintsss_{0}^{1}  \frac{ |f(x_{d})| |\phi_{\mu}(x_{d})-(\phi_{\mu})_{(0,1)}|^{\tau}}{x_{d} \ln^{\tau} \left( \frac{2}{x_{d}} \right) } \, dx_{d} =  \bigintsss_{0}^{1} \frac{|f(x_{d})|^{\theta  + (1-\theta)} |\phi_{\mu}(x_{d})-(\phi_{\mu})_{(0,1)}|^{\theta p + (1-\theta)\tau_{1}}}{x^{\theta  + (1- \theta)}_{d} \ln^{\theta p + (1-\theta)\tau_{1}} \left( \frac{2}{x_{d}} \right) } \, dx_{d} \\  & \leq \left( \bigintsss_{0}^{1} \frac{ |f(x_{d})| |\phi_{\mu}(x_{d})-(\phi_{\mu})_{(0,1)}|^{p}}{x_{d} \ln^{p} \left( \frac{2}{x_{d}} \right) } \, dx_{d} \right)^{\theta} \left( \bigintsss_{0}^{1} \frac{ |f(x_{d})| |\phi_{\mu}(x_{d})-(\phi_{\mu})_{(0,1)}|^{\tau_{1}}}{x_{d} \ln^{\tau_{1}} \left( \frac{2}{x_{d}} \right) } \, dx_{d} \right)^{1-\theta}.
\end{align*}
Therefore, from the assumed inequality \eqref{optcond1}, we have
\begin{equation*}
     \bigintsss_{0}^{1} \frac{ |f(x_{d})| |\phi_{\mu}(x_{d})- (\phi_{\mu})_{(0,1)}|^{\tau}}{x_{d} \ln^{\tau} \left( \frac{2}{x_{d}} \right) } \, dx_{d} \leq C[\phi_{\mu}]^{\theta p}_{W^{s,p}((0,1))} [\phi_{\mu}]^{(1-\theta) \tau_{1}}_{W^{s_{1},\tau_{1}}((0,1))} ,
\end{equation*}
where $s_{1} \in (0,1)$ such that $s_{1} \tau_{1} = 1$. Applying Lemma \ref{lemma on opt}, and using \eqref{opt eqn 1} and \eqref{weightconv1}, we get
\begin{align*}
     \frac{C |f(x_{d,\epsilon})| }{\ln^{\tau-1} \left( \frac{2}{\epsilon} \right) } & \leq \lim_{\mu \to 0} \bigintsss_{0}^{1} \frac{|f(x)||\phi_{\mu}(x_{d})-(\phi_{\mu})_{(0,1)}|^{\tau}}{x_{d} \ln^{\tau} \left( \frac{2}{x_{d}} \right) } \, dx_{d} \\ & \leq C[v_{\epsilon}]^{\theta p}_{W^{s,p}((0,1))} [v_{\epsilon}]^{(1-\theta) \tau_{1}}_{W^{s_{1},\tau_{1}}((0,1))} \\ & \leq C \left( \frac{1}{\ln^{(p-1) \theta} \left( \frac{2}{\epsilon} \right) } \right) \left( \frac{1}{\ln^{(\tau_{1}-1)(1- \theta)} \left( \frac{2}{\epsilon} \right) } \right).
\end{align*}
Hence, 
\begin{equation*}
    \frac{\ln^{(\tau_{1}-1)(1- \theta)} \left( \frac{2}{\epsilon} \right)  \ln^{(p-1) \theta} \left( \frac{2}{\epsilon}\right) |f(x_{d,\epsilon})| }{\ln^{\tau-1} \left( \frac{2}{\epsilon} \right)} \leq C.
\end{equation*}
Using the fact that $\tau=\theta p + (1-\theta) \tau_{1}$, we obtain 
\begin{equation*}
    (\tau_{1}-1)(1-\theta) +(p-1)\theta -(\tau-1)=0.
\end{equation*}
Therefore, 
\begin{equation*}
    |f(x_{d,\epsilon})| \leq C.
\end{equation*}
This cannot be true as  $|f(x_{d,\epsilon})| \to \infty$ as  $\epsilon \to 0$. This proves the optimality of the weight function for  $d=1$ and  $\tau \geq p$ in Lemma \ref{case 1 flat case}. Now assume that $d > 1$ and $\tau = p$. From \eqref{opt eqn 1} and \eqref{weightconv1}, Lemma \ref{lemma on opt}, together with Subsection \ref{slicing lemma} (See Appendix  \ref{appen}), we have
\begin{align*}
  \frac{C |f(x_{d,\epsilon})| }{\ln^{p-1} \left( \frac{2}{\epsilon} \right) } & \leq \lim_{\mu \to 0} \bigintsss_{(0,1)^{d}} \frac{|f(x_{d})| |\widetilde{\Phi}_{\epsilon, \mu}(x)-(\widetilde{\Phi}_{\epsilon, \mu})_{(0,1)^{d}}|^{p}}{x_{d} \ln^{p} \left(\frac{2}{x_{d}} \right)} \,  dx  \\ & = \lim_{\mu \to 0} \bigintsss_{0}^{1}  \frac{ |f(x_{d})| |\phi_{\mu}(x_{d})-(\phi_{\mu})_{(0,1)}|^{p}}{x_{d} \ln^{p} \left( \frac{2}{x_{d}} \right) } \, dx_{d} \\ & \leq C \lim_{\mu \to 0} [\widetilde{\Phi}_{\epsilon, \mu}]^{p}_{W^{s,p}((0,1)^{d})}  \leq C \lim_{\mu \to 0} [\phi_{\mu}]^{p}_{W^{s,p}((0,1))} \leq    \frac{C}{\ln^{p-1} \left( \frac{2}{\epsilon} \right)}.
\end{align*}
Therefore, we have
\begin{equation*}
   | f(x_{d,\epsilon})| \leq C.
\end{equation*}
This is not possible as  $|f(x_{d,\epsilon})| \to \infty$ as  $\epsilon \to 0$. This proves the optimality of the weight function for $d>1$ and  $\tau=p$ in Lemma \ref{case 1 flat case}. Therefore, by the patching argument, we deduce the optimality of the weight function in Theorem \ref{mainresult} among the class of functions depending on $\delta_{\Omega}$ for a general bounded Lipschitz domain in the case $\tau \geq p$ when $d=1$ and $\tau =p$ when $d>1$.
\end{proof}

\smallskip

\subsection{Optimality of the weight function in Theorem \ref{mainresult}: 
an illustrative example of the unit ball \texorpdfstring{$\Omega= B_{1}(0)$}{ball of radius 1}}\label{opt proof for a ball}


In this subsection, we illustrate the optimality of the weight function in Theorem~\ref{mainresult}
among the class of functions depending on $\delta_{\Omega}$ by considering the unit ball
$\Omega = B_{1}(0)$. Although the optimality has already been established for general bounded
Lipschitz domains in the previous subsection, we present this example in order to understand
the optimality more precisely in the case of the unit ball.


\smallskip

Let us consider the sequence of functions $\{ u_{\epsilon} \}_{\epsilon>0}$ defined in Theorem~\ref{density theorem} with $R=1$. Using the Bernoulli inequality, we obtain
\begin{align*}
    (u_{\epsilon})_{B_{1}(0)} = \frac{1}{|B_{1}(0)|} \int_{B_{1}(0)} u_{\epsilon}(x) \,  dx & = \frac{1}{|B_{1}(0)|} \left(  \int_{|x| \leq 1 - \epsilon} \, dx + \int_{1- \epsilon<|x|<1}   \frac{\ln{\left( \frac{2}{\epsilon} \right)}}{\ln{\left( \frac{2}{1-|x|} \right)}}  \, dx   \right) \\ & \geq (1-\epsilon)^{d} \geq 1 - d \epsilon .
\end{align*} 
On the other hand,  $(u_{\varepsilon})_{B_{1}(0)}\le 1$. Therefore,
\begin{equation*}
     (u_{\epsilon})_{B_{1}(0)} = 1 + O(\epsilon).
\end{equation*}
Here,  $O$ is the notation for big  $O$. For  $\Omega= B_{1}(0)$, we have  $\delta_{\Omega}(x) = 1-|x|$. Therefore, we have, for $\tau>1$,
\begin{align*}
    \bigintsss_{B_{1}(0)} \frac{|u_{\epsilon}(x)-(u_{\epsilon})_{B_{1}(0)}|^{\tau}}{ (1-|x|) \ln^{\tau} \left( \frac{2}{1-|x|} \right) } \, dx &  =   \bigintsss_{B_{1}(0)} \frac{|u_{\epsilon}(x)- 1+ O(\epsilon )|^{\tau}}{ (1-|x|) \ln^{\tau}\left( \frac{2}{1-|x|} \right)} \, dx  \\ & = C  \bigintsss_{B_{1}(0)} \frac{|u_{\epsilon}(x)- 1 |^{\tau}}{ (1-|x|) \ln^{\tau} \left( \frac{2}{1-|x|} \right) } \, dx + O(\epsilon ) \\ &  = C \bigintsss_{1- \epsilon<|x|<1} \frac{\left| \ln \left(  \frac{2}{\epsilon} \right) - \ln \left( \frac{2}{1-|x|} \right) \right|^{\tau}}{ (1-|x|) \ln^{2 \tau} \left( \frac{2}{1-|x|} \right) } \, dx  +  O(\epsilon ).
\end{align*} 
Using the change of variable in polar coordinates, we get
\begin{align*}
    \bigintsss_{B_{1}(0)} \frac{|u_{\epsilon}(x)-(u_{\epsilon})_{B_{1}(0)}|^{\tau}}{ (1-|x|) \ln^{\tau} \left( \frac{2}{1-|x|} \right) } \, dx &   = C \int_{1- \epsilon}^{1} \frac{\left| \ln \left(  \frac{2}{\epsilon} \right) - \ln \left( \frac{2}{1-r} \right) \right|^{\tau}}{ (1-r) \ln^{2 \tau} \left( \frac{2}{1-r} \right) } r^{d-1} \, dr  +  O(\epsilon)  \\ & = C \int_{0}^{\epsilon} \frac{\left| \ln \left(  \frac{2}{\epsilon} \right) - \ln \left( \frac{2}{t} \right) \right|^{\tau}}{ t \ln^{2 \tau} \left( \frac{2}{t} \right) } (1-t)^{d-1} \, dt  +  O(\epsilon) \\ & \geq C  \int_{0}^{\epsilon} \frac{\left| \ln \left(  \frac{2}{\epsilon} \right) - \ln \left( \frac{2}{t} \right) \right|^{\tau}}{ t \ln^{2 \tau} \left( \frac{2}{t} \right) } \, dt  +  O (\epsilon) .
\end{align*}
Using the change of variable  $ \frac{\ln \left( \frac{2}{t} \right)}{ \ln \left( \frac{2}{\epsilon} \right)}  = \xi$, we obtain
\begin{equation}\label{optimality lhs}
    \bigintsss_{B_{1}(0)} \frac{\left|u_{\epsilon}(x)-(u_{\epsilon})_{B_{1}(0)} \right|^{\tau}}{ (1-|x|) \ln^{\tau} \left( \frac{2}{1-|x|} \right) } \, dx \geq \frac{C}{\ln^{\tau -1} \left( \frac{2}{\epsilon} \right)} \int_{1}^{\infty} \frac{|\xi - 1|^{\tau}}{\xi^{2 \tau}} \, d \xi +   O(\epsilon) = \frac{C}{\ln^{\tau -1} \left( \frac{2}{\epsilon} \right)} + O(\epsilon) .
\end{equation}
Also, from \eqref{limit of sequence for d>1}, we have
\begin{equation*}
    [u_{\epsilon}]^{p}_{W^{s,p}(B_{1}(0))} \leq  \frac{C}{\ln^{p-1} \left( \frac{2}{\epsilon} \right)}.
\end{equation*}
Therefore, by following steps similar to those in the previous subsection for general bounded Lipschitz domains in the cases $\tau \geq p$ when $d=1$, and $\tau = p$ with $d > 1$, and using the above two inequalities, one can deduce the optimality of the weight function in Theorem \ref{mainresult} for $\tau \ge p$ when $d = 1$ and for $\tau = p$ when $d > 1$, when the domain is $\Omega = B_{1}(0)$.

\section{Weighted fractional boundary Hardy-type inequality}\label{weighted fractional boundary Hardy-type}

In this section, we establish a weighted fractional boundary Hardy-type inequality in 
$\mathbb{R}^{d}_{+}$ for the critical case $1 + \beta_{1} + \beta_{2} = sp$ under suitable 
conditions on $\beta_{1}$ and $\beta_{2}$. This result serves as a fundamental building block 
for proving weighted fractional boundary Hardy-type inequalities for the case 
$1 + \beta_{1} + \beta_{2} = sp$ in various domains. In particular, we will prove the following 
theorem:

\begin{thm}\label{upperhalfplane}
Let $d \geq 1$, $p>1$, and $\tau>1$, and define $ \alpha = d + (1-d)\frac{\tau}{p}$. Assume that $\beta_{1}, \beta_{2} \in \mathbb{R}$ satisfy $\beta_{1},\; \beta_{2},\; \beta_{1}+\beta_{2} \in (-1,0]$, and  let $s \in (0,1)$ be such that $ 1 + \beta_{1} + \beta_{2} = sp$. Assume moreover that $\tau \geq p$ when $sp=d=1$, and that $\tau \in [p, p^{*}_{s}]$ when $sp < d$. Then there exists a constant  $C = C(d,p,s,\tau,\beta_{1},\beta_{2},R) > 0$ such that for every $u \in C^{1}_{c}(\mathbb{R}^{d}_{+})$ satisfying 
$\operatorname{supp} u \subset \mathbb{R}^{d-1} \times (0,R)$ for some $R>0$, the following weighted Hardy-type inequality holds,
\begin{equation}\label{ineq-upperhalfspace}
    \left( 
        \int_{\mathbb{R}^{d}_{+}}
        \frac{|u(x)|^{\tau}}{x_{d}^{\alpha}\, \ln^{\,b}\!\left( \tfrac{4R}{x_{d}} \right)}
        \, dx
    \right)^{\!\frac{1}{\tau}}
    \leq
    C
    \left(
        \int_{\mathbb{R}^{d}_{+}}
        \int_{\mathbb{R}^{d}_{+}}
        \frac{|u(x)-u(y)|^{p}}{|x-y|^{d+sp}}
        \, x_{d}^{\beta_{1}} y_{d}^{\beta_{2}}
        \, dx \, dy
    \right)^{\!\frac{1}{p}},
\end{equation}
where $b= \tau -1$ when $0 \leq \alpha <1$, and $b=\tau$ when $\alpha=1$.
\end{thm}
\begin{proof}
Let  $n_{0}, ~ n_{1} \in \mathbb{Z}$ be such that  $2^{n_{0}} \leq R \leq 2^{n_{0}+1}$, and  $2^{n_{1}} \geq 2^{n_{0}+1}$ so that  $D= (-2^{n_{1}},2^{n_{1}})^{d-1} \times (0,2^{n_{0}+1})$ and  $\operatorname{supp} u \subset D$. From the definition of the sets  $A_{k}$ and  $A^{i}_{k}$ in Section \ref{The flat boundary case} with  $n=2^{n_{1}}$, we have
\begin{equation*}
    D= \bigcup_{k = - \infty}^{n_{0}} A_{k}  ,
\end{equation*}
and
\begin{equation*}
    A_{k} = \bigcup_{i = 1}^{\sigma_{k}} A^{i}_{k}  ,
\end{equation*}
where  $\sigma_{k} =  2^{(-k+1)(d-1)} 2^{n_{1}(d-1)}$. Fix such a set  $A^{i}_{k}$. Applying Lemma \ref{sobolev} with  $\Omega = (1,2)^{d}, ~ \lambda=2^{k}$, and  $sp=1+ \beta_{1} + \beta_{2}$, we obtain
\begin{equation*}
    \fint_{A^{i}_{k}} |u(x)-(u)_{A^{i}_{k}}|^{\tau} \, dx \leq C 2^{k(1+ \beta_{1} + \beta_{2} -d) \frac{\tau}{p}} [u]^{\tau}_{W^{s,p}(A^{i}_{k})}  ,
\end{equation*}
where  $C=C(d,p,s, \tau)$ is a constant. Since for $x, ~ y \in A^{i}_{k}$, we have  $ 2^{k \beta_{1}}  \leq C x_{d}^{\beta_{1}}$ and   $ 2^{k \beta_{2}} \leq C y_{d}^{\beta_{2}}$, we get
\begin{equation}\label{weighted sobolev on cube}
    \fint_{A^{i}_{k}} |u(x)-(u)_{A^{i}_{k}}|^{\tau}  dx \leq C 2^{k(1 -d) \frac{\tau}{p}} [u]^{\tau}_{W^{s,p, \beta_{1}, \beta_{2}}(A^{i}_{k})}  ,
\end{equation}
where $C=C(d, p,s, \tau,  \beta_{1},  \beta_{2})$ is a constant. For any  $x = (x',x_d) \in A^{i}_{k}$, we have  $x_d \geq 2^k$. Therefore, we obtain
\begin{equation*}
    \int_{A^{i}_{k}} \frac{|u(x)|^{\tau}}{x^{\alpha}_{d}}  \, dx 
    \leq \frac{C}{2^{k \alpha}} \int_{A^{i}_{k}} |u(x)-(u)_{A^{i}_{k}}|^{\tau} \,  dx +  \frac{C}{2^{k \alpha}} \int_{A^{i}_{k}} |(u)_{A^{i}_{k}}|^{\tau}dx.
    \end{equation*}
    Using the previous inequality together with the identity  $\alpha = d+(1-d)\frac{\tau}{p}$, we obtain
    \begin{equation*}
    \int_{A^{i}_{k}} \frac{|u(x)|^{\tau}}{x^{\alpha}_{d}} \, dx  
   \leq C [u]^{\tau}_{W^{s,p, \beta_{1}, \beta_{2}}(A^{i}_{k})} + C 2^{k(d-\alpha)} |(u)_{A^{i}_{k}}|^{\tau}  .
\end{equation*}   
For each $x \in A^{i}_{k}$, we have $\frac{1}{x_{d}} > \frac{1}{2^{k+1}}$. 
Since there exists $n_{0} \in \mathbb{Z}$ such that $2^{n_{0}} \leq R \leq 2^{\,n_{0}+1}$, 
it follows that   $ \ln \left(\frac{4R}{x_{d}} \right) > (n_{0}-k+1) \ln 2$. Therefore, we obtain
\begin{equation*}
    \bigintsss_{A^{i}_{k}} \frac{|u(x)|^{\tau}}{x^{\alpha}_{d} \ln^{b} \left(\frac{4R}{x_{d}} \right)}  \, dx 
     \leq  C [u]^{\tau}_{W^{s,p, \beta_{1}, \beta_{2}}(A^{i}_{k})} + C \frac{2^{k(d-\alpha)}}{(n_{0}-k+1)^{b}} |(u)_{A^{i}_{k}}|^{\tau}  .
\end{equation*}
Observe that this reduces to an inequality identical in form to \eqref{eqn0090} in the proof of Lemma \ref{case 1 flat case}. Therefore, by repeating the same steps used in that proof and summing over the relevant indices, we obtain
\begin{equation*}
    \sum_{k=m}^{n_{0}} \bigintsss_{A_{k}} \frac{|u(x)|^{\tau}}{x^{\alpha}_{d} \ln^{b} \left( \frac{4R}{x_{d}} \right)} \,  dx \leq  C 
     \sum_{j=1}^{\sigma_{n_{0}+1}} |(u)_{A^{j}_{n_{0}+1}}|^{\tau} +  C \sum_{k=m}^{n_{0}} [u]^{\tau}_{W^{s,p, \beta_{1}, \beta_{2}}(A_{k} \cup A_{k+1})}  .
\end{equation*}
Since  $\operatorname{supp} u \subset (-2^{n_{1}},2^{n_{1}})^{d-1} \times (0,2^{n_{0}+1})$, we have $(u)_{A^{j}_{n_{0}+1}} = 0$. Therefore,
\begin{equation*}
     \sum_{k=m}^{n_{0}} \bigintsss_{A_{k}} \frac{|u(x)|^{\tau}}{x^{\alpha}_{d} \ln^{b} \left( \frac{4R}{x_{d}} \right)}  dx \leq  C 
\sum_{k=m}^{n_{0} } [u]^{\tau}_{W^{s,p, \beta_{1}, \beta_{2}}(A_{k} \cup A_{k+1})}  .
\end{equation*}
Hence,
\begin{equation*}
   \left(  \bigintsss_{\mathbb{R}^{d}_{+}} \frac{|u(x)|^{\tau}}{x^{\alpha}_{d} \ln^{b} \left(\frac{4R}{x_{d}} \right)} \,  dx \right)^{\frac{1}{\tau}} \leq  C 
 [u]_{W^{s,p, \beta_{1}, \beta_{2}}(\mathbb{R}^{d}_{+})}.
\end{equation*}
This proves the theorem.
\end{proof}

\smallskip

Following a similar approach as illustrated in the proof of Theorem  \ref{mainresult} and using Lemma  \ref{testfunc2} (See Appendix  \ref{appen}), we can prove the following theorem for weighted fractional Sobolev space.

\begin{thm}\label{weighted fractional main theorem}
Let $d \geq 1$, $\beta_{1}, \beta_{2} \in \mathbb{R}$ satisfy $\beta_{1}, \beta_{2}, \beta_{1}+\beta_{2} \in \left(-\frac{1}{2},0 \right)$ and $p> \frac{d+2(1+\beta_{1}+\beta_{2})}{2}$, and $\tau>1$. Let $s \in (0,1)$ be such that $sp = 1 + \beta_{1}+ \beta_{2}$. Let  $\Omega$ be an open set in $\mathbb{R}^d$, then the following weighted fractional boundary Hardy-type inequality
    \begin{equation}
        \left( \int_{\Omega} \frac{|u(x)|^{\tau}}{\delta_{\Omega}^{\alpha}(x) \ln^{\beta}(\rho(x))}  dx \right)^{\frac{1}{\tau}} \leq C \|u\|_{W^{s,p, \beta_{1}, \beta_{2}}(\Omega)},  \quad \ \forall \ u \in C^{1}_{c}(\Omega),
    \end{equation}
   where $C=C(d,p,s, \tau, \beta_{1}, \beta_{2}, R, \Omega)>0$, holds true in each of the following cases:
    \begin{itemize}
        \item[(D$1$)] $\Omega$ is a bounded Lipschitz domain such that  $\delta_{\Omega}(x) < R$ for all  $x \in \Omega$, for some  $R>0$. In this case,  $sp < 1$,  $\rho(x) = \frac{4R}{\delta_{\Omega}(x)} $ and \\
        (a) if  $\tau =p$, then  $\alpha=1$ and  $ \beta=p$. \\
        (b) if  $\tau \in (p,p + \frac{1}{d}]$, then  $\alpha = d+ (1-d)\frac{\tau}{p} $ and  $\beta= \tau -1$. \\
        (c) if  $\tau \in (p+ \frac{1}{d}, p^{*}_{s} ]$, then  $\alpha = d+ (1-d)\frac{\tau}{p}$ and  $\beta= dp + (1-d) \tau$. \\
        (d) if  $\tau <p$, then  $\alpha=1$ and  $\beta=p$.
        \item[(D$2$)]  $\Omega$ is a domain above the graph of a Lipschitz function  $\gamma : \mathbb{R}^{d-1} \to \mathbb{R}, ~ d>1$, such that  $\operatorname{supp} u \subset \Omega \backslash \overline{\Omega}_{R}$, where  $\Omega_{R}$ is a domain above the graph of a Lipschitz function  $\gamma_{R}: \mathbb{R}^{d-1} \to \mathbb{R}$ such that  $\gamma_{R}(x') = \gamma(x')+R$ for some  $R>0$. In this case,  $sp < 1$,  $\rho(x) = \frac{4R}{\delta_{\Omega}(x)} $ and \\
         (a) if  $\tau =p$, then  $\alpha=1$ and  $ \beta=p$. \\
        (b) if  $\tau \in (p,p + \frac{1}{d}]$, then  $\alpha = d+ (1-d)\frac{\tau}{p} $ and  $\beta= \tau -1$. \\
        (c) if  $\tau \in (p+ \frac{1}{d}, p^{*}_{s} ]$, then  $\alpha = d+ (1-d)\frac{\tau}{p}$ and  $\beta= dp + (1-d) \tau$. \\
        (d) if  $\tau <p$, then  $\alpha=1$ and  $\beta=p$.
    \end{itemize}
\end{thm}

\section{Appendix}\label{appen}

In this section, we establish several important results that are used in the proofs of our main theorems. Although some of these results are already known in the literature, we include their proofs here to keep the article self-contained.

\subsection{FBHI \eqref{fractionalhardy} fails for (T\texorpdfstring{$2$}{2}) with  \texorpdfstring{$sp=1$}{sp=1} }\label{failure FBHI} Assume that \eqref{fractionalhardy} holds for (T$2$) with $sp=1$. Let $\Omega = B^{c}_{1}(0)$ and, for large $n \in \mathbb{N}$, define a radial function $v_{n} \in C^{\infty}_{c}(B^{c}_{1}(0))$ such that
\begin{equation*}
     v_{n}(x) = \begin{cases}
        1, & 1+ \frac{2}{n} \leq |x| \leq 2  \\
        0, & |x| < 1+ \frac{1}{n} \ \text{or} \  |x|  > 3.
    \end{cases}
\end{equation*}
Moreover, $0 \leq v_{n} \leq 1$, $|\nabla v_{n}(x)| \leq C n$ for all $1+ \dfrac{1}{n} < |x| < 1+ \dfrac{2}{n}$, and $|\nabla v_{n}(x)| \leq C$ for all $|x| > 1+ \dfrac{2}{n}$, for some constant $C>0$. We denote $A(r_{1}, r_{2}) = B_{r_{2}}(0) \setminus \overline{B_{r_{1}}(0)}$ for $r_{2}> r_{1}>0$. Let us calculate the following expression: 
\begin{align*}
    \int_{B^{c}_{1}(0)} \int_{B^{c}_{1}(0)} \frac{|v_{n}(x)- v_{n}(y)|^{p}}{|x-y|^{d+1}} \, dx \, dy  & =  \int_{A(1, 4)} \int_{A(1, 4)}  \frac{|v_{n}(x)- v_{n}(y)|^{p}}{|x-y|^{d+1}} \, dx \, dy \\ & \quad + 2  \int_{A(1,3)} |v_{n}(x)|^{p} \left( \int_{A(4, \infty)} \frac{1}{|x-y|^{d+1}} \, dy \right) \, dx \\ & =: J_{1}+J_{2}.
\end{align*}
We first estimate $J_{1}$. Since $v_{n}$ is radial, applying Lemma~\ref{d dim to 1 dim} with $R_{1}=1$ and $R_{2}=4$, we obtain
\begin{align*}
J_{1} \leq C \int_{1}^{4} \int_{1}^{4} \frac{|v_{n}(r) - v_{n}(t)|^{p}}{|r - t|^{2}} & \, dr \,  dt  = C \Bigg( \int_{1}^{1+\frac{2}{n}} \int_{1}^{1+\frac{3}{n}}  + \int_{1}^{1+\frac{2}{n}} \int_{1+\frac{3}{n}}^{4}  \\ & \quad + \int_{1+\frac{2}{n}}^{4} \int_{1}^{1+\frac{3}{n}} +  \int_{1+\frac{2}{n}}^{4} \int_{1+\frac{3}{n}}^{4} \Bigg)  \frac{|v_{n}(r) - v_{n}(t)|^{p}}{|r - t|^{2}} \, dr \,  dt \\ & =: J^{1}_{1} + J^{2}_{1} + J^{3}_{1} + J^{4}_{1} .
\end{align*}
For $J^{1}_{1}$, using the bound $|v_{n} (r) - v_{n} (t)| \leq C n|r-t|$ and $p>1$, we have
\begin{align*}
  J^{1}_{1} & \leq C n^{p}  \int_{1}^{1+\frac{2}{n}} \int_{1}^{1+\frac{3}{n}} \frac{|r-t|^{p}}{|r-t|^{2}} \, dr \,  dt \\  & =  C n^{p}  \int_{0}^{\frac{2}{n}} \int_{0}^{\frac{3}{n}} |r-t|^{p-2}\, dr \,  dt  \leq C n^{p} \int_{0}^{\frac{2}{n}} dt \int_{0}^{\frac{5}{n}} z^{p-2} dz  =  C n^{p} \left( \frac{5}{n} \right)^{p-1} \frac{2}{n}= C.
\end{align*}
For $J^{2}_{1}$, we have
\begin{align*}
    J^{2}_{1} & = C \int_{1}^{1+\frac{2}{n}} \int_{1+\frac{3}{n}}^{4} \frac{|v_{n}(r) - v_{n}(t)|^{p}}{|r - t|^{2}} \, dr \,  dt \\ &  \leq C 2^{p}  \int_{1}^{1+\frac{2}{n}} \int_{1+\frac{3}{n}}^{4}  \frac{1}{|r - t|^{2}} \, dr \, dt   = C 2^{p} \int_{0}^{\frac{2}{n}} \int_{\frac{3}{n}}^{3} |r-t|^{-2} dr \, dt \leq C 2^{p} \int_{0}^{\frac{2}{n}} dt \int_{\frac{1}{n}}^{\infty} z^{-2} \, dz = C.
\end{align*}
 For $J^{3}_{1}$, using $|v_{n}(r)-v_{n}(t)| \leq C |r-t|$ and $p>1$, we obtain
\begin{align*}
   J^{3}_{1} & =  C \int_{1+\frac{2}{n}}^{4} \int_{1}^{1+\frac{3}{n}} \frac{|v_{n}(r) - v_{n}(t)|^{p}}{|r - t|^{2}} \, dr \,  dt  \leq C^{p}  \int_{1+\frac{2}{n}}^{4} \int_{1}^{1+\frac{3}{n}} |r-t|^{p-2} \, dr \, dt \\ & =  C^{p} \int_{\frac{2}{n}}^{3} \int_{0}^{\frac{3}{n}} |r-t|^{p-2} \, dr \, dt   \leq C^{p} \int_{0}^{3} \int_{0}^{1} |r-t|^{p-2} \, dr \, dt  = C. 
\end{align*}
Finally, for $J^{4}_{1}$, using the same bound and $p>1$, we have
\begin{align*}
    J^{4}_{1} & = C \int_{1+\frac{2}{n}}^{4} \int_{1+\frac{3}{n}}^{4} \frac{|v_{n}(r) - v_{n}(t)|^{p}}{|r - t|^{2}} \, dr \,  dt \\ & \leq C^{p}  \int_{1+\frac{2}{n}}^{4} \int_{1+\frac{3}{n}}^{4} |r - t|^{p-2} \, dr \,  dt  \leq C^{p} \int_{0}^{4} \int_{0}^{4} |r - t|^{p-2} \, dr \,  dt = C.
\end{align*}
Combining the above estimates, we obtain $J^{i}_{1} \leq C$ for all $i=1,2,3,4$, and consequently, 
$J_{1} \leq C$.
Finally, for $J_{2}$, we have
\begin{align*}
 J_{2} =   \int_{A(1,3)} |v_{n}(x)|^{p} \left( \int_{A(4, \infty)} \frac{1}{|x-y|^{d+1}} \, dy \right) \, dx \leq  \int_{A(1,3)} dx \int_{|z| >1} |z|^{-d-1} dz = C.
\end{align*}
Hence, combining the estimates for $J_{1}$ and $J_{2}$, we obtain
\begin{equation}\label{counter1}
     \int_{B^{c}_{1}(0)} \int_{B^{c}_{1}(0)} \frac{|v_{n}(x)- v_{n}(y)|^{p}}{|x-y|^{d+1}} \, dx \, dy \leq C.
\end{equation}
On the other hand, the left-hand side of \eqref{fractionalhardy} satisfies
\begin{equation}\label{counter2}
   \int_{B^{c}_{1}(0)} \frac{|v_{n}(x)|^{p}}{(|x|-1)} \, dx \geq \int_{1+ \frac{2}{n} <|x| <2 } \frac{1}{(|x|-1)} \, dx \to \infty \hspace{3mm} \text{as} \ n \to \infty.
\end{equation}
Therefore, \eqref{counter1} and \eqref{counter2} yield a contradiction to \eqref{fractionalhardy}.

\subsection{Slicing}\label{slicing lemma}
We introduce a slicing lemma for the Gagliardo seminorm. While this inequality can be found in  \cite[Section 6.2]{leonibook}, for the sake of completeness and self-containment, we present its proof in this article. Let  $\Phi(x)=\phi_{\mu}(x_{1}) \cdots \phi_{\mu}(x_{d})$ defined in Subsection  \ref{opt proof for gen domain} with $\Omega= (0,1)^{d}$. Adding and subtracting  $\phi_{\mu}(y_{1})\phi_{\mu}(x_{2}) \cdots \phi_{\mu}(x_{d})$ and using  $|\phi_{\mu}(x_{i})| \leq 1$ for all  $ 1 \leq i \leq d$, we get
\begin{multline*}
    |\phi_{\mu}(x_{1})\phi_{\mu}(x_{2}) \cdots \phi_{\mu}(x_{d}) - \phi_{\mu}(y_{1}) \phi_{\mu}(y_{2}) \cdots \phi_{\mu}(y_{d})|^{p}  \\ \leq C  |\phi_{\mu}(x_{1})-\phi_{\mu}(y_{1})|^{p} + C |\phi_{\mu}(x_{2}) \cdots \phi_{\mu}(x_{d}) - \phi_{\mu}(y_{2}) \cdots \phi_{\mu}(y_{d})|^{p}.
\end{multline*}
Similarly, proceeding similarly as above for  $|\phi_{\mu}(x_{i}) \cdots \phi_{\mu}(x_{d}) - \phi_{\mu}(y_{i}) \cdots \phi_{\mu}(y_{d})|^{p}, ~ 2 \leq i \leq d-1$, we obtain
\begin{equation*}
    |\phi_{\mu}(x_{1}) \cdots \phi_{\mu}(x_{d}) - \phi_{\mu}(y_{1}) \cdots \phi_{\mu}(y_{d})|^{p}  \leq  C \sum_{j=1}^{d} |\phi_{\mu}(x_{j})- \phi_{\mu}(y_{j})|^{p}.
\end{equation*}
Therefore, we have
\begin{align*}
    [\Phi]^{p}_{W^{s,p}(\Omega)} &  \leq C \sum_{j=1}^{d} \int_{\Omega} \int_{\Omega} \frac{|\phi_{\mu}(x_{j})-\phi_{\mu}(y_{j})|^{p}}{|x-y|^{d+sp}} \, dx \, dy \\ & = C \sum_{j=1}^{d} \bigintsss_{\Omega} \bigintsss_{\Omega} \frac{|\phi_{\mu}(x_{j})- \phi_{\mu}(y_{j})|^{p}}{(x_{j}-y_{j})^{d+sp} \left( 1+ \sum_{\substack{i=1 \\ i \neq j}}^{d} \left| \frac{x_{i}-y_{i}}{x_{j}-y_{j}} \right|^{2} \right)^{\frac{d+sp}{2}}} \, dx \, dy .
\end{align*}
Using the change of variables $z_{i} = \frac{x_{i}-y_{i}}{x_{j}-y_{j}}$ and the fact 
\begin{equation*}
    \bigintssss_{(0, \infty)^{d-1}} \frac{1}{(1+ \sum_{\substack{i=1 \\ i \neq j}}^{d} |z_{i}|^{2} )^{\frac{d+sp}{2}}} \, dz_{1} \cdots dz_{j-1} \, dz_{j+1} \cdots  dz_{d} < \infty,
\end{equation*}
we obtain
\begin{equation*}
    [\Phi]^{p}_{W^{s,p}(\Omega)} \leq C \sum_{j=1}^{d} \int_{0}^{1} \int_{0}^{1} \frac{|\phi_{\mu}(x_{j})- \phi_{\mu}(y_{j})|^{p}}{|x_{j}-y_{j}|^{1+sp}} \, dx_{j} \, dy_{j}.
\end{equation*}
This proves the slicing lemma for Gagliardo seminorm. Furthermore, if we consider the function  $\widetilde{\Phi}_{\epsilon,\mu}$  defined in Subsection \ref{opt proof for gen domain} as $\widetilde{\Phi}_{\epsilon,\mu}(x',x_{d}) = \phi_{\mu}(x_{d})$ for all $(x',x_{d}) \in \Omega= (0,1)^{d}$. Following a similar approach as above, we obtain
\begin{equation*}
    [\widetilde{\Phi}_{\epsilon,\mu}]^{p}_{W^{s,p}((0,1)^{d})} \leq C [\phi_{\mu}]^{p}_{W^{s,p}((0,1))}.
\end{equation*}

\subsection{Density Result}\label{density result appen} In this subsection, we will establish the density result. In fact we prove that for an open set $\Omega \subset \mathbb{R}^{d}$, we have $W^{s,p}_{0}(\Omega) = W^{s,p}(\Omega)$ if and only if the constant function $1 \in W^{s,p}_{0}(\Omega)$. This result can be found in \cite[Lemma 13]{dyda2022}, but for the sake of completeness and self-containment, we provide its proof here. We therefore present the following lemma:
\begin{lemma}\label{density lemma appen}
    Let $\Omega$ be an open set in $\mathbb{R}^{d}$. Then $W^{s,p}(\Omega)=W^{s,p}_{0}(\Omega)$ if and only if the constant function $1 \in W^{s,p}_{0}(\Omega)$.
\end{lemma}
\begin{proof}
    If $W^{s,p}_{0}(\Omega) = W^{s,p}(\Omega)$, then the constant function $1 \in W^{s,p}_{0}(\Omega)$. Assume $1 \in W^{s,p}_{0}(\Omega)$, there exists a sequence of functions $g_{m} \in C^{\infty}_{c}(\Omega)$ such that $g_{m} \to 1$ in $W^{s,p}_{0}(\Omega)$. Let $f \in W^{s,p}(\Omega)$ and we may also assume $f \in L^{\infty}(\Omega)$. Define a sequence of functions $\{h_{m}=f g_{m} \}_{m \in \mathbb{N}}$. Then $h_{m} \in W^{s,p}_{0}(\Omega)$ and clearly $h_{m} \to f$ in $L^{p}(\Omega)$. Now let us calculate the Gagliardo seminorm of $h_{m}-f$,
    \begin{align*}
        [h_{m} - f]^{p}_{W^{s,p}(\Omega)} & = \int_{\Omega} \int_{\Omega} \frac{|f(x)(1-g_{m}(x))-f(y)(1- g_{m}(y))|^{p}}{|x-y|^{d+sp}} \, dx \, dy \\ & \leq 2^{p-1} \int_{\Omega} \int_{\Omega}  \frac{|f(x)|^{p}|g_{m}(x)-g_{m}(y)|^{p}}{|x-y|^{d+sp}} \, dx \, dy \\ & \quad + 2^{p-1} \int_{\Omega} \int_{\Omega}  \frac{|1-g_{m}(x)|^{p}|f(x)-f(y)|^{p}}{|x-y|^{d+sp}} \, dx \, dy \\ & \leq  2^{p-1}\|f\|^{p}_{\infty} [g_{m}]^{p}_{W^{s,p}(\Omega)} + 2^{p-1} \int_{\Omega} \int_{\Omega}  \frac{|1-g_{m}(x)|^{p}|f(x)-f(y)|^{p}}{|x-y|^{d+sp}}  \, dx \, dy.
    \end{align*} 
    Since $g_{m} \to 1$ in $L^{p}(\Omega)$ as $m \to \infty$,  there exists a subsequence (which we again denote by  $\{ g_{m} \}_{m \in \mathbb{N}}$ again) such that $g_{m} \to 1$ almost everywhere as $m \to \infty$. Therefore, by the dominated convergence theorem and the fact that $[g_{m}]^{p}_{W^{s,p}(\Omega)} \to 0$ as $m \to \infty$, we obtain
    \begin{equation*}
        [h_{m} - f]^{p}_{W^{s,p}(\Omega)} \to 0 \quad \text{as} \quad m \to \infty. 
    \end{equation*}
    Hence, $f \in W^{s,p}_{0}(\Omega)$. This completes the proof of the lemma.
\end{proof}

\subsection{Weighted fractional Sobolev space} In this subsection, we establish an analogue of Lemma \ref{testfunc} for the weighted fractional Sobolev space (see \eqref{weighted fractional sobolev norm} for the definition). This lemma plays an important role in the proof of Theorem \ref{weighted fractional main theorem}. In particular, we will prove the following lemma.

\begin{lemma}\label{testfunc2}
Let $\Omega$ be a bounded Lipschitz domain in $\mathbb{R}^{d}$. Assume that $\beta_{1}, \beta_{2} \in \mathbb{R}$ satisfy $\beta_{1}, \beta_{2}, \beta_{1}+\beta_{2} \in \left(-\frac{1}{2},0 \right)$ and $p> \frac{d+2(1+\beta_{1}+\beta_{2})}{2}$. Let $s \in (0,1)$ be such that $sp = 1 + \beta_{1}+ \beta_{2}$. Consider $u \in W^{s,p, \beta_{1}, \beta_{2}}(\Omega)$ and $\xi \in C^{0,1}(\Omega)$ with $0 \leq \xi \leq 1$. Then $\xi u \in W^{s,p, \beta_{1}, \beta_{2}}(\Omega)$, and for some constant $C=C(d,p,s, \beta_{1}, \beta_{2},\Omega)>0$,
    \begin{equation*}
        \| \xi u\|_{W^{s,p, \beta_{1}, \beta_{2}}(\Omega)} \leq C \|u\|_{W^{s,p, \beta_{1}, \beta_{2}}(\Omega)}.
    \end{equation*}
\end{lemma}
\begin{proof}
Let $sp=1+ \beta_{1}+ \beta_{2}$ and $\beta_{1},  \beta_{2},  \beta_{1}+ \beta_{2} \in \left( - \frac{1}{2}, 0  \right)$. Since $\xi \leq 1$, it is clear that $\|\xi u\|_{L^{p}(\Omega)} \leq \|u\|_{L^{p}(\Omega)}$. Now, adding and subtracting the term $\xi(y) u(x)$ and using $|\xi(x)-\xi(y)| \leq C |x-y|$, for some $C>0$ and for all $x,y \in \Omega$, we obtain
    \begin{align*}
       & [\xi u]^{p}_{W^{ s,p, \beta_{1}, \beta_{2}}(\Omega)}  \leq C [u]^{p}_{W^{s,p, \beta_{1}, \beta_{2}}(\Omega)} + C \int_{\Omega} |u(x)|^{p} \int_{\Omega}  \frac{|\xi(x)-\xi(y)|^{p}}{ |x-y|^{d+sp}}\delta^{\beta_{1}}_{\Omega}(x) \delta^{\beta_{2}}_{\Omega}(y) \, dy \, dx \\ & \leq  C [u]^{p}_{W^{s,p, \beta_{1}, \beta_{2}}(\Omega)} + C \int_{\Omega} |u(x)|^{p} \int_{\Omega}   |x-y|^{p-d-sp} \delta^{\beta_{1}}_{\Omega}(x) \delta^{\beta_{2}}_{\Omega}(y) \, dy \, dx  \\ & = C  [u]^{p}_{W^{s,p, \beta_{1}, \beta_{2}}(\Omega)} + C \int_{\Omega} |u(x)|^{p} \int_{\Omega \cap \{ \delta_{\Omega}(x) \leq \delta_{\Omega}(y) \} }  |x-y|^{p-d-sp} \delta^{\beta_{1}}_{\Omega}(x) \delta^{\beta_{2}}_{\Omega}(y) \, dy \, dx \\ &\quad + C \int_{\Omega} |u(x)|^{p} \int_{\Omega \cap \{ \delta_{\Omega}(y) < \delta_{\Omega}(x) \} }  |x-y|^{p-d-sp} \delta^{\beta_{1}}_{\Omega}(x) \delta^{\beta_{2}}_{\Omega}(y) \, dy \, dx \\ & =: C  [u]^{p}_{W^{s,p, \beta_{1}, \beta_{2}}(\Omega)} + I_{1}+ I_{2}.
    \end{align*}
For $I_{1}$, we have $\delta^{\beta_{2}}_{\Omega}(y) \leq \delta^{\beta_{2}}_{\Omega}(x)$ as $\beta_{2} <0$. Therefore, using the estimate $\int_{\Omega} |x-y|^{p-d-sp} \, dy \leq C$, we obtain
\begin{align*}
    I_{1} & \leq C \int_{\Omega} |u(x)|^{p} \delta^{\beta_{1}+ \beta_{2}}_{\Omega}(x) \int_{\Omega }  |x-y|^{p-d-sp} \, dy \, dx \\ & \leq C \int_{\Omega} |u(x)|^{p} \delta^{\beta_{1}+ \beta_{2}}_{\Omega}(x) \, dx \\ & = C \int_{\Omega} \frac{|u(x)|^{p}}{\delta^{sp}_{\Omega}(x)}  \delta^{1+2(\beta_{1}+ \beta_{2})}_{\Omega}(x) \, dx \leq C \int_{\Omega} \frac{|u(x)|^{p}}{\delta^{sp}_{\Omega}(x)} \, dx \leq C \left( [u]^{p}_{W^{s,p}(\Omega)} + \|u\|^{p}_{L^{p}(\Omega)} \right).
\end{align*}
Here, we have used fractional boundary Hardy inequality for the case $sp<1$ (See Theorem \ref{theorem sp<1}), and the fact $1+ 2(\beta_{1}+ \beta_{2}) >0$. Using $[u]^{p}_{W^{s,p}(\Omega)} \leq C [u]^{p}_{W^{s,p, \beta_{1}, \beta_{2}}(\Omega)} $, we get
\begin{equation*}
    I_{1} \leq C \left([u]^{p}_{W^{s,p, \beta_{1}, \beta_{2}}(\Omega)} + \|u\|^{p}_{L^{p}(\Omega)} \right).
\end{equation*}
Now, for $I_{2}$, we have $\delta^{\beta_{1}}_{\Omega}(x) < \delta^{\beta_{1}}_{\Omega}(y)$ as $\beta_{1} <0$. Therefore, using H$\ddot{\text{o}}$lder's inequality and the fact that $ -1<2(\beta_{1}+ \beta_{2}) <0$, we obtain
\begin{align*}
    I_{2} & \leq C \int_{\Omega} |u(x)|^{p} \int_{\Omega} |x-y|^{p-d-sp} \delta^{\beta_{1}+ \beta_{2}}_{\Omega}(y) \, dy \, dx \\ & \leq  C \int_{\Omega} |u(x)|^{p} \left( \int_{\Omega} |x-y|^{2p-2d-2sp} \, dy \right)^{\frac{1}{2}} \left( \int_{\Omega}  \delta^{2(\beta_{1}+ \beta_{2})}_{\Omega}(y) \, dy \right)^{\frac{1}{2}} \, dx \\ & \leq C \int_{\Omega} |u(x)|^{p} \left( \int_{\Omega} |x-y|^{2p-2d-2sp} \, dy \right)^{\frac{1}{2}} \, dx.
\end{align*}
Since
\begin{equation*}
    \int_{\Omega} |x-y|^{2p-2d-2sp} dy < \infty,
\end{equation*}
whenever $2p-d-2sp>0$, that is $p> \frac{d+2sp}{2}= \frac{d+2(1+\beta_{1}+\beta_{2})}{2}$. Hence, 
\begin{equation*}
    I_{2} \leq C \|u\|^{p}_{L^{p}(\Omega)}.
\end{equation*}
This proves the lemma.
\end{proof}

\subsection{Some estimate}\label{some estimate}
In this subsection, we derive an inequality that is useful for establishing our main results in the flat boundary case. Let $\Omega_{n} = (-n,n)^{d-1} \times (0,1)$, and let $A_{k}$ be as defined in Section \ref{The flat boundary case}. We aim to show that
\begin{equation}\label{se1}
     \sum_{k= m}^{-2} [u]^{p}_{W^{s,p}(A_{k} \cup A_{k+1})} \leq 2 [u]^{p}_{W^{s,p}(\Omega_{n})}.
\end{equation}
Consider two families of sets:
\begin{equation*}
  \mathcal{E}:=  \left\{ A_{k} \cup A_{k+1} : -k \ \text{is even and} \ k \leq -1  \right\}
\end{equation*}
and
\begin{equation*}
  \mathcal{O}:=  \left\{ A_{k} \cup A_{k+1} : -k \ \text{is odd and} \ k \leq -1  \right\}.
\end{equation*}
Then $\mathcal{E}$ and $\mathcal{O}$ are collection of mutually disjoint sets respectively. Define
\begin{equation*}
    \mathcal{F}_{e} := \bigcup_{\substack{k =m \\ -k \ \text{is even}}}^{-2} A_{k} \cup A_{k+1} \quad \text{and} \quad \mathcal{F}_{o} := \bigcup_{\substack{k =m \\ -k \ \text{is odd}}}^{-2} A_{k} \cup A_{k+1}.
\end{equation*} 
From the definition of $A_{k}$, we have $\mathcal{F}_{e} \subset \Omega_{n}$ and $\mathcal{F}_{o} \subset \Omega_{n}$. Therefore, we have
\begin{align}\label{sum Ak A(k+1)}
    \sum_{k= m}^{-2} [u]^{p}_{W^{s,p}(A_{k} \cup A_{k+1})} & = \sum_{\substack{k =m \\ -k \ \text{is even}}}^{-2} [u]^{p}_{W^{s,p}(A_{k} \cup A_{k+1})} + \sum_{\substack{k =m \\ -k \ \text{is odd}}}^{-2} [u]^{p}_{W^{s,p}(A_{k} \cup A_{k+1})} \nonumber \\ &  \leq [u]^{p}_{W^{s,p}(\mathcal{F}_{e})} + [u]^{p}_{W^{s,p}(\mathcal{F}_{o})} \leq 2 [u]^{p}_{W^{s,p}(\Omega_{n})}.
\end{align}
This establishes the desired inequality.

\subsection{Domain above the graph of a Lipschitz function}\label{lipschitzdomain}
In this section, we suppose that if  $\Omega$ is a domain lying above the graph of a Lipschitz function  $\gamma: \mathbb{R}^{d-1} \to \mathbb{R}$, and let  $F : \mathbb{R}^d \to \mathbb{R}^d$ be the map defined by  $F(x)= (\xi', \xi_{d})$, where  $\xi ' = x'$ and  $ \xi_{d} = x_{d} - \gamma(x')$. We will prove that
\begin{equation*}
    \delta_{\Omega}(x) \sim \xi_{d},
\end{equation*}
for all  $x \in \Omega$, i.e., there exist  $C_{1}, ~ C_{2} >0$ such that
\begin{equation*}
    C_{1} \xi_{d} \leq \delta_{\Omega}(x) \leq C_{2} \xi_{d} . 
\end{equation*}
Let  $\gamma: \mathbb{R}^{d-1} \to \mathbb{R} $ be a Lipschitz function. Then there exists a constant $M >0$ such that for all  $x', ~ y' \in \mathbb{R}^{d-1}$, we have
\begin{equation*}
    |\gamma(x') - \gamma(y')| \leq M |x'-y' | .
\end{equation*}
Let  $F: \mathbb{R}^d \to \mathbb{R}^d$ be defined by  $F(x)= (F_{1}(x), \dots, F_{d}(x))= (x', x_{d}- \gamma(x'))$, where  $x'=(x_{1}, \dots , x_{d-1})$. Then, using the inequality $(a+b)^{2} \leq 2a^{2}+ 2 b^{2}$, we obtain
\begin{align*}
     |F(x)-F(y)|^{2} & = |x'-y'|^{2} + |x_{d}-y_{d}-\gamma(x')+ \gamma(y')|^{2} \\ &
    \leq |x'-y'|^{2} + 2|x_{d}-y_{d}|^{2} + 2|\gamma(x') - \gamma(y')|^{2} \\ &
    \leq |x'-y'|^{2}  + 2 |x_{d}-y_{d}|^{2} + 2M^2 |x'-y'|^{2} +  
    \leq (2M^{2}+2) |x-y|^{2} .
\end{align*}
Let  $C= (2M^{2}+2)^{1/2}$. Then  $|F(x)-F(y)| \leq C |x-y|$. Define  $G(\xi) = F^{-1}(\xi) = (\xi',\xi_{d}+ \gamma(\xi'))$. Then  $G$ is also Lipschitz, and  $| G(\xi)-G(\eta)| \leq (2M^{2}+2)^{1/2} |\xi-\eta|$. Hence, there exists a constant  $C>0$ such that
\begin{equation*}
    \frac{1}{C} |x-y| \leq |F(x)-F(y)| \leq C |x-y| .
\end{equation*}
Let  $\Omega = \{ x \in \mathbb{R}^d : x_{d} > \gamma(x') \}$ and  $\partial \Omega = \{ x \in \mathbb{R}^d : x_{d} = \gamma(x') \}$. Then  $F(\Omega) = \mathbb{R}^{d}_{+}$ and  $F(\partial \Omega) = \partial \mathbb{R}^d_{+}$. Let  $x \in \Omega$ and  $y \in \partial \Omega$ be such that
\begin{equation*}
    \delta_{\Omega}(x) = |x-y| = \text{inf} \{ |x- \eta| : \eta \in \partial \Omega \} .
\end{equation*}
Then  $\delta_{\Omega}(x) = |x-y| \leq |x- \eta|$ for all  $\eta \in \partial \Omega$. Therefore,
\begin{equation*}
    \delta_{\Omega}(x) = |x-y| \leq C |F(x)- F(\eta)| 
    \leq C |F(x)- \xi | 
\end{equation*}
for all  $\xi \in \partial \mathbb{R}^d_{+}$. So,  $\delta_{\Omega}(x) \leq C \underset{\xi \in \partial \mathbb{R}^{d}_{+}}{\inf}  \{ |F(x) - \xi| \} = C F_{d}(x)$. Let  $F(x) = (\xi' , \xi_{d})$. Then, we obtain  $\delta_{\Omega}(x) \leq C \xi_{d}$. Similarly, by considering the inverse map  $G$, we obtain  $C_{1} \xi_{d} \leq \delta_{\Omega}(x)$. Therefore,
\begin{equation*}
    C_{1} \xi_{d} \leq \delta_{\Omega}(x) \leq C \xi_{d} .
\end{equation*} 

\subsection{Change of variables formula }\label{changeofvariable} (see \cite{gariepy2015measure})
Let $T: \mathbb{R}^{d}\to \mathbb{R}^d$ be a Lipschitz function and $DT(x)$ be the gradient matrix of $T$. Then if $u : \mathbb{R}^d \to \mathbb{R}$ is Lebesgue measurable, then $u \circ T : \mathbb{R}^d \to \mathbb{R}$ is a Lebesgue measurable. If $u$ is Lebesgue measurable on $\mathbb{R}^d$ and if $u \in L^{1}$, then
\begin{equation*}
    \int_{\mathbb{R}^d} u(x) JT(x) \, dx = \bigintsss_{\mathbb{R}^d} \left( \sum_{x \in T^{-1}(y)} u(x) \right) \, dy
\end{equation*}
where $JT(x)$ denotes the Jacobian of $T$ and is defined by $JT(x) = |det DT(x)|$. \smallskip

Let $F: \mathbb{R}^d \to \mathbb{R}^d$ and $G: \mathbb{R}^d \to \mathbb{R}^d$ be Lipschitz functions defined as described earlier. It follows that $F^{-1}(\xi) = G(\xi)$ for all $\xi \in \mathbb{R}^d$, which implies that $F$ is bilipschitz. As a result, $F$ is injective. Also, $JF(x) = JG(x)=1$. If $\operatorname{supp} u \subset \Omega$, then
\begin{equation*}
    \int_{\Omega} u(x) \, dx = \int_{F(\Omega)} u \circ G(x) \, dx \ .
\end{equation*}\smallskip

Let $\mathbb{S}^{d-1}_{+} = \{ x=(x', x_{d}) \in \mathbb{R}^d : |x|=1 \ \text{and} \ x_{d} >0 \}$. By using the change of variables $ z = (x', \sqrt{1-|x'|^{2}})$, we have
\begin{equation*}
    \int_{\mathbb{S}^{d-1}_{+}} f(z) \, dz = \int_{|x'|<1} f(x', \sqrt{1-|x'|^{2}}) \, \frac{dx'}{\sqrt{1-|x'|^{2}}}.
\end{equation*}

\bigskip 

\textbf{Acknowledgement:} We express our gratitude to the Department of Mathematics and Statistics at the Indian Institute of Technology Kanpur for providing a conducive research environment. For this work, Adimurthi acknowledges support from IIT Kanpur, while P. Roy is supported by the Core Research Grant (CRG/2022/007867) of SERB and the ARG MATRICS Grant (ANRF/ARGM/2025/002269/MTR) of ANRF. V. Sahu is grateful for the support received through MHRD, Government of India (GATE fellowship). P. Roy would like to thank Prof. S. Varadhan for discussions on the subject. We would like to thank the anonymous referee, whose comments helped us significantly improve the results of this article.


\end{document}